\theoremstyle{plain}
\newtheorem{thm}{Theorem}[subsection]
\newtheorem*{thm*}{Theorem}
\newtheorem{lem}[thm]{Lemma}
\newtheorem{cor}[thm]{Corollary}
\newtheorem{prop}[thm]{Proposition}
\newtheorem{quest}[thm]{Question}
\theoremstyle{definition}
\newtheorem{rem}[thm]{Remark}
\newtheorem{defn}[thm]{Definition}
\newtheorem{ex}[thm]{Example}
\newcommand{\res}[2]{\left. #1 \right|_{#2}}
\newcommand{\CAlg}{\mathrm{CAlg}}
\newcommand{\Sp}{\mathcal{S}}
\newcommand{\oh}[1]{\widehat{#1}}
\newcommand{\sq}[1]{\widetilde{#1}}
\newcommand{\C}{\mathcal{C}}
\renewcommand{\O}{\mathcal{O}}
\renewcommand{\H}{H}
\newcommand{\D}{\mathcal{D}}
\newcommand{\SCR}{SCR}
\newcommand{\kk}{\mathbf{k}}
\let\vinograd\gg
\newcommand{\bA}{\mathbf{A}}
\newcommand{\cA}{\mathcal{A}}
\newcommand{\sA}{\mathscr{A}}
\newcommand{\cB}{\mathcal{B}}
\newcommand{\cC}{\mathcal{C}}
\newcommand{\bF}{\mathbf{F}}
\newcommand{\cF}{\mathcal{F}}
\newcommand{\sF}{\mathscr{F}}
\newcommand{\bG}{\mathbf{G}}
\newcommand{\cG}{\mathcal{G}}
\newcommand{\bL}{\mathbf{L}}
\newcommand{\cM}{\mathcal{M}}
\newcommand{\fM}{\mathfrak{M}}
\newcommand{\sM}{\mathscr{M}}
\newcommand{\cO}{\mathcal{O}}
\newcommand{\sO}{\mathscr{O}}
\newcommand{\bP}{\mathbf{P}}
\newcommand{\sU}{\mathscr{U}}
\newcommand{\sV}{\mathscr{V}}
\newcommand{\cW}{\mathcal{W}}
\newcommand{\cX}{\mathcal{X}}
\newcommand{\bZ}{\mathbf{Z}}
\newcommand{\fm}{\mathfrak{m}}
\newcommand{\id}{\mathop{{\rm id}}\nolimits}
\newcommand{\pt}{\ast}
\newcommand{\heart}{\heartsuit}
\newcommand{\isom}{\simeq}
\DeclareMathOperator{\Cech}{Cech}
\DeclareMathOperator{\Tot}{Tot}
\DeclareMathOperator{\Fun}{Fun}
\DeclareMathOperator{\fib}{fib}
\DeclareMathOperator{\cofib}{cone}
\DeclareMathOperator{\im}{im}
\newcommand{\ilim}{\mathop{\varprojlim}\limits}
\newcommand{\dlim}{\mathop{\varinjlim}\limits}
\renewcommand{\mod}{\text{\rm{-mod}}}  
\renewcommand{\bZ}{{\mathbb Z}}
\newcommand{\DD}{\mathbb{D}}
\newcommand{\ZZ}{\mathbb{Z}}
\newcommand{\RGamma}{R\Gamma}
\newcommand{\op}[1]{\!\!\mathop{\rm ~#1}\nolimits}
\newcommand{\Ind}{\op{Ind}}
\newcommand{\Spec}{\op{Spec}}
\newcommand{\Spf}{\op{Spf}}
\newcommand{\RHom}{\op{RHom}}
\newcommand{\Hom}{\op{Hom}}
\newcommand{\Map}{\op{Map}}
\newcommand{\Ext}{\op{Ext}}
\newcommand{\Perf}{\op{Perf}}
\newcommand{\APerf}{\op{APerf}}
\newcommand{\DDAPerf}{\DD\APerf}
\newcommand{\DCoh}{\op{DCoh}}
\newcommand{\inner}[1]{\underline{#1}}
\newcommand{\Gm}{{\mathbb{G}_m}}
\newcommand{\dual}{\vee}
\newcommand{\QC}{QC}
\renewcommand{\L}{L}
\newcommand{\I}{\mathscr{I}}
\newcommand{\X}{\mathscr{X}}
\newcommand{\Y}{\mathscr{Y}}
\newcommand{\Z}{\mathscr{Z}}
\renewcommand{\S}{\mathscr{S}}
\newcommand{\colim}{\varinjlim}
\newcommand{\Coh}[1][]{\op{Coh}^{#1}}
\DeclareMathOperator{\Bl}{Bl}
\newcommand{\CA}{{\hyperref[property:CA]{(CA)}}\xspace}
\newcommand{\CD}[1][]{{\hyperref[property:CD]{(CD)${}_{#1}$}}\xspace}
\newcommand{\CP}[1][]{{\hyperref[property:CP]{(CP)${}_{#1}$}}\xspace}
\newcommand{\GEheart}[1][]{{\hyperref[property:GE-heart]{($\mathrm{GE}^{\heart}$)${}_{#1}$}}\xspace}
\newcommand{\GE}[1][]{{\hyperref[property:GE]{(GE)${}_{#1}$}}\xspace}
\newcommand{\pGE}[1][]{{\hyperref[property:pGE]{(pGE)${}_{#1}$}}\xspace}
\newcommand{\LL}[1][]{{\hyperref[property:L]{(L)${}_{#1}$}}\xspace}
\newcommand{\val}[1]{\sharp(#1)}
\newcommand{\customlabel}[2]{
\protected@write \@auxout {}{\string \newlabel {#1}{{#2}{}}}}
\newcommand{\ps}[1]{[\![#1]\!]}
\begin{document}

\title{Mapping stacks and categorical notions of properness}
\author{Daniel Halpern-Leistner \and Anatoly Preygel}

\begin{abstract}
One fundamental consequence of a scheme $X$ being proper is that the functor classifying maps from $X$ to any other suitably nice scheme or algebraic stack is representable by an algebraic stack. This result has been generalized by replacing $X$ with a proper algebraic stack. We show, however, that it also holds when $X$ is replaced by many examples of algebraic stacks which are not proper, including many global quotient stacks. This leads us to revisit the definition of properness for stacks.

We introduce the notion of a formally proper morphism of stacks and study its properties. We develop methods for establishing formal properness in a large class of examples. Along the way, we prove strong h-descent results which hold in the setting of derived algebraic geometry but not in classical algebraic geometry. Our main applications are algebraicity results for mapping stacks and the stack of coherent sheaves on a flat and formally proper stack.
\end{abstract}

\maketitle

\tableofcontents


\section{Introduction}

If $X$ is a projective variety and $Y$ is a quasi-projective variety over a field $k$, then it is well-known that there is a quasi-projective variety $\Map(X,Y)$ parameterizing morphisms from $X$ to $Y$. The properness hypothesis on $X$ is necessary:

\begin{ex} \label{E:maps_A1}
The functor $\Map(\bA^1_k,\bA^1_k)$ parameterizing families of maps from $\bA^1_k$ to itself is not representable by an algebraic space: If it were, then any compatible family of $k\ps{x} / x^n$-points for $n \geq 1$ would extend to a unique $k\ps{x}$-point. But letting $t$ denote the coordinate on $\bA^1$, the compatible family of morphisms $e^{tx} \colon \bA^1_{k[x]/x^n} \to \bA^1_k$ does not extend to a morphism $\bA^1_{k\ps{x}} \to \bA^1_{k}$, because $e^{tx}$ is not a polynomial in $t$ unless $x$ is nilpotent.

\emph{On the other hand}, if $Y$ is equipped with a linearizable $\bG_m$-action, then the functor parameterizing families of $\bG_m$-equivariant maps is representable by a scheme, $\Map(\bA^1_k,Y)^{\bG_m}$. The scheme $\Map(\bA^1_k,Y)^{\bG_m}$ is the disjoint union of the Bia\l{}ynicki-Birula strata \cite{Bialynicki-Birula} for the action of $\bG_m$ on $Y$.
\end{ex}

This illustrates a common phenomenon in equivariant algebraic geometry (see, for instance, \cites{HaimanSturmfels, AlexeevBrion}): certain non-proper schemes behave as if they are proper if one takes into account equivariance. This paper is dedicated to the systematic development of this concept, and its consequences.

In the example above, we will deduce the representability of $\inner{\Map}(\bA^1_k,Y)^{\bG_m}$ from the fact that the functor $\inner{\Map}(\bA^1_k / \bG_{m,k}, Y / \bG_{m,k})$ parameterizing flat families of morphisms of algebraic stacks is itself representable by an algebraic stack (see \Cref{ex:BB_strata} below). However, the stack $\bA^1_k / \bG_{m,k}$ is still not proper in the sense of \cite{LMB00} -- i.e., separated, finite type, and universally closed. The problem is that points in a separated stack must have proper automorphism groups, so the quotient of a scheme $X$ by an algebraic group $G$ can not be proper unless $G$ acts with finite stabilizers on $X$.

Our goal is thus to propose an alternative to the notion of properness for an algebraic stack $\X$ which applies to many of the quotient stacks which arise in practice, such as the stack $\bA^1/\bG_m$. The resulting notion has convenient formal properties, and we establish two of the useful consequences which are familiar from the case of schemes: $\Map(\X,\Y)$ is an algebraic stack, as is the stack of coherent sheaves on $\X$, when $\X$ is ``formally proper'' and flat over the base.

\subsection{Definition of a formally proper morphism}
\label{ssec:results_mapping}

Our notion of properness makes use of the derived category of ``almost perfect complexes'' on a stack, $\APerf(\X)$. When $\X$ is noetherian and classical, $\APerf(\X)$ agrees with the derived category $D^-\Coh(\X)$ of right-bounded complexes of quasi-coherent sheaves with coherent homology, and more generally $\APerf(-)$ extends the definition of pseudo-coherent complexes \cite{stacks-project}*{\href{https://stacks.math.columbia.edu/tag/08FS}{Tag 08FS}} to derived schemes and stacks.

We will also use the theory of formal completion, which we review in \Cref{section:completions}. If $\X$ is an algebraic stack and $Z \subset |\X|$ a co-compact closed subset, we will denote the formal completion of $\X$ along $Z$ by $\oh{\X}$.
\begin{defn}\label{defn:complete-closed-immersion}
  We say that $\X$ is \emph{complete along $Z$} if the restriction functor $\APerf(\X) \to \APerf(\oh{\X})$ is an equivalence of $\infty$-categories.  In this case we say that $\Z \to \X$ is a \emph{complete closed immersion} for any closed substack $\Z \subset \X$ having underlying closed subset $Z$.
\end{defn}

\begin{ex} Let $\X = \Spec(R)$ for some noetherian ring $R$, and let $Z \subset \Spec(R)$ be the closed subset corresponding to an ideal $I \subset R$. Then by \Cref{lem:classical_spf} we have
\[
\APerf(\X) = D^-\Coh(R), \qquad \APerf(\oh{\X}) = \ilim_n D^-\Coh(R/I^n),
\]
and $\X$ is complete along $Z$ if and only if $R$ is $I$-adically complete. 
\end{ex}

Although this notion of completeness is formulated for derived stacks, we will see in \Cref{lem:completeness_nil} that it only depends on the underlying classical stack, and in fact only on its underlying reduced stack.

\begin{defn} \label{def:cohomologically_proper}
We say that a morphism of algebraic (derived) stacks $\pi \colon \X \to \S$ is \emph{formally proper} if it is (almost) finitely presented and for any noetherian algebraic (derived) stack $\Y$ which is complete along a closed subset $Z \subset |\Y|$ and any morphism $\Y \to \S$, the fiber product $\X^\prime := \X \times_\S \Y$ is complete along the preimage of $Z$.
\end{defn}

Say $\Y = \Spec(R)$ for a complete noetherian ring $R$ and $Z$ is the closed subset defined by an ideal $I \subset R$. Then $\X' = \X \times_\S \Y$ being complete along the preimage of $Z$ is equivalent to the restriction map giving an equivalence
\[
D^-\Coh (\X') \xrightarrow{\simeq} \ilim_n D^-\Coh(\X \times_\S \Spec(R/I^n)).
\]
When $\X \to \S$ is representable by proper algebraic spaces, then this equivalence is the Grothendieck existence theorem, or formal GAGA, for derived categories \cite{Knutson}*{Chap.~V}. Therefore \Cref{def:cohomologically_proper} asserts that $\pi$ universally satisfies a strong form of the Grothendieck existence theorem.

At first blush, it is strange to elevate a major theorem into a definition. On the other hand, the definition of a proper scheme is also somewhat complicated: checking that a morphism is universally closed, or even checking the valuative criterion, is a priori an infinite and intractable task. We argue that the value of \Cref{def:cohomologically_proper} (as with any definition!) results from the availability of interesting examples, the usefulness of its formal properties, and the theorems one can prove with it.

As a first source of examples, we generalize the notion of a projective morphism of schemes to that of a \emph{cohomologically projective} morphism of derived algebraic stacks in \Cref{defn:coh_projective}, and we prove:
\begin{thm*}[\Cref{thm:coh_projective_is_proper}] A cohomologically projective morphism of (derived) algebraic stacks is formally proper.
\end{thm*}
The proof is formally similar to the classical proof of the Grothendieck existence theorem for projective morphisms, but the framework of derived algebraic geometry offers some significant conceptual simplifications.

\begin{ex}[\Cref{prop:projective_over_affine_quotient}]
Let $S$ be a noetherian scheme over a field $k$, $G$ a linearly reductive algebraic $k$-group acting on a $k$-scheme $X$, and $\pi : X \to S$ a $G$-invariant morphism which factors as a projective morphism $X \to X'$ followed by an affine morphism $X' \to S$ of finite type. If $\pi$ admits a relatively ample $G$-equivariant invertible sheaf and $H_0 \pi_\ast(\cO_X)^G$ is a coherent $\cO_S$-module, then $X/G \to S$ is cohomologically projective.
\end{ex}

\begin{ex}[\Cref{prop:good-moduli-projective}]
If $S$ is a noetherian scheme over a field $k$, $G$ is an algebraic $k$-group, and $X$ is a $G$-scheme which admits a good quotient \cite{seshadri} which is projective over $S$, then $X/G$ is cohomologically projective over $S$. This generalizes the formal GAGA result for good moduli spaces established in \cite{GZB}.
\end{ex}

Formally proper morphisms also enjoy some convenient formal properties:

\begin{prop} \label{prop:coverings}
Say $\X' \to \X$ is a surjective and formally proper morphism of (derived) $\S$-stacks, then if $\X' \to \S$ is formally proper, so is $\X \to \S$. In addition, formally proper morphisms are closed under base change, composition, and fiber products. 
\end{prop}
\begin{proof}
The first claim follows from \Cref{thm:descent-apGE-new} (which holds under weaker hypotheses on the map $\X' \to \X$) and \Cref{T:cp_fully_faithful}, and the remaining claims are immediate from \Cref{def:cohomologically_proper}.
\end{proof}

As a result, any stack $\X$ which admits a surjective proper morphism $\X' \to \X$ from a cohomologically projective stack $\X'$ is formally proper. This allows one to construct many examples of formally proper morphisms which are not cohomologically projective. For instance, if $\cG$ is a reductive group scheme over a scheme $S$, then $B_S \cG \to S$ is formally proper under suitable hypotheses (\Cref{prop:BG_proper}). See \Cref{ex:many_coh_proper} below for more examples.

One key technical result underlying \Cref{prop:coverings} and many other results is a new $h$-descent theorem in the context of derived algebraic geometry. Recall an $h$-cover is an (almost) finitely presented morphism which is universally a topological submersion. Examples include fppf\footnote{In the derived context, we say that a morphism is $fppf$ if it is surjective, flat, and almost finitely presented. A better abbriviation might be fpppf, for ``fid\`element \`a plat et presque de pr\'esentation finie.'' But there is no risk of confusion: whereas a finitely presented classical ring homomorphism need not be almost finitely presented when regarded as a map of simplicial commutative rings, a flat and finitely presented ring homomorphism arises as the derived base change of a flat and finitely presented homomorphism of noetherian rings and is therefore almost finitely presented.} morphisms, as well as proper and formally proper surjective morphisms.
\begin{thm*}[\Cref{thm:descent-aperf}] The presheaf $\APerf(-)$ satisfies derived descent for $h$-covers of locally noetherian derived stacks.
\end{thm*}
Although many of the results in this paper are classical in nature, this result truly requires the setting of derived algebraic geometry. For instance the inclusion of the reduced subscheme $X^{red} \to X$ is a derived $h$-cover, but it has a trivial Cech nerve in the category of classical schemes and therefore this is not an effective descent morphism in this category. On the other hand, there are classical stacks for which this descent theorem provides our only method of establishing formal properness.

Finally, we note that \Cref{def:cohomologically_proper} is closely related to other plausible notions of properness for algebraic stacks. By \Cref{T:cp_fully_faithful} and \Cref{P:universally_closed}, we have the following implications for an (almost) finitely presented morphism $\pi : \X \to \S$ of noetherian algebraic (derived) stacks:
\[
\xymatrix{ *+[F]\txt{$\pi$ is formally\\proper} \ar@{=>}[r] & *+[F]\txt{Property \CP:\\universally, $R^i \pi_\ast(-)$ preserves\\coherent sheaves (\Cref{def:define_CP})} \ar@{=>}[r] & *+[F]\txt{$\pi$ is universally\\closed} } 
\]
Furthermore, all of these properties hold if $\S$ is a noetherian scheme and $\pi$ is proper, by \Cref{ex:many_coh_proper} (1). However, we will see in \Cref{E:bad_example} that \CP alone is not sufficient for our applications to mapping stacks and coherent sheaves.

\subsection{Applications}

Recall that given two stacks $\X$ and $\Y$ over a base stack $\S$, the mapping stack $\inner{\Map}_\S(\X,\Y)$ is the stack whose space of $T$-points (for any scheme $T$ over $\S$) is the space of maps $\inner{\Map}_\S (T \times_\S \X, \Y)$.

\begin{thm*} [\Cref{thm:derived_mapping_stacks}]
Let $R$ be a (simplicial commutative) $G$-ring, and let $\X$ be an algebraic (derived) $R$-stack which is formally proper and of finite Tor amplitude $n$ over $\S:=\Spec(R)$. Then for any algebraic (derived) stack $\Y$ which is locally almost finitely presented with quasi-affine diagonal over $\S$, $\inner{\Map}_{\S}(\X,\Y)$ is an algebraic $(n+1)$-stack, locally almost finitely presented over $\S$. If $\pi$ is flat, then $\inner{\Map}_\S(\X,\Y)$ has quasi-affine diagonal, and the diagonal is affine if $\Y \to \S$ has affine diagonal.
\end{thm*}
\begin{rem}
This theorem holds under slightly weaker hypotheses. For an elaboration, see the discussion following \Cref{thm:derived_mapping_stacks}.
\end{rem}

\begin{ex} \label{ex:theta_stability}
Let $\Theta := \bA^1 / \bG_m$, then for any locally (almost) finitely presented morphism of (derived) algebraic stacks $\Y \to \S$ with quasi-affine (resp. affine) diagonal, $\inner{\Map}_\S(\Theta_\S,\Y)$ is a locally almost finitely presented algebraic $\S$-stack with quasi-affine (resp. affine) diagonal (the lack of noetherian hypotheses is explained in \Cref{rem:noetherian}). This is the starting point for the theory of $\Theta$-stability developed in \cite{HLInstability}, where $\inner{\Map}_\S(\Theta_\S,\Y)$ is regarded as the stack of ``filtered points'' in $\Y$. The fibers of the map $\inner{\Map}_\S(\Theta_\S,\Y) \to \Y$ defined by restriction along the inclusion $\{1\} \to \Theta$ are regarded as generalized flag spaces for points of $\Y$.
\end{ex}

\begin{ex}[BB-strata] \label{ex:BB_strata}
As a special case of the previous example, consider a quasi-separated algebraic space $X$ locally of finite presentation over a scheme $S$, and let $(\bG_m)_S$ act on $X$. Then we have a cartesian square:
\[
\xymatrix{\inner{\Map}_S(\bA^1_S,X)^{\bG_m} \ar[r] \ar[d] & \inner{\Map}_S(\Theta_S, X/(\bG_m)_S) \ar[d]^{X/\bG_m \to B\bG_m} \\ S \ar[r]^-s & \inner{\Map}_S(\Theta_S,B(\bG_m)_S) },
\]
where $s$ classifies the projection $(\bA^1/\bG_m)_S \to (B\bG_m)_S$. So the fact that the mapping stacks on the right side are algebraic implies that the functor $\Map_S(\bA^1_S,X)^{\bG_m}$ parameterizing flat families of equivariant maps from $\bA^1$ to $X$ is algebraic. This generalizes the classical theorem of Bia\l{}ynicki-Birula, and more recent work by Drinfeld \cite{drinfeld2013algebraic}.
\end{ex}

\begin{ex} If $S$ is a scheme and $G$ and $H$ are smooth affine $S$-group-schemes, the fiber of the map $\inner{\Map}_S(BG, BH) \to \inner{\Map}_S(\pt,BH) \simeq BH$ over the tautological $S$-point of $BH$ is the functor $\Hom_{gp/S}(G, H)$ parameterizing families of group homomorphisms. If $G$ is a reductive group scheme, then $\inner{\Map}_S(BG, BH)$ is algebraic with affine diagonal by \Cref{prop:BG_proper} and \Cref{thm:derived_mapping_stacks} (working \'etale locally over $S$ and using relative noetherian approximation). This recovers the deep result in SGA3 Exp. XXIV (Cor. 7.2.3) that $\Hom_{gp/S}(G,H)$ is representable by algebraic spaces with affine diagonal.
\end{ex}

\begin{ex} \label{E:bad_example}
To illustrate why formal properness is the correct hypothesis in \Cref{thm:derived_mapping_stacks}, say $S=\Spec(k)$ is a field of characteristic $0$. Then $B\bG_a$ satisfies the ``coherent pushforward'' property \CP, but $\Hom_{gp/S}(\bG_a, \bG_m)$, and thus $\inner{\Map}_k(B\bG_a,B\bG_m)$ is \emph{not} algebraic. If it were, then every compatible family of $k[x]/x^n$-points would necessarily come from a $k\ps{x}$-point, and the counterexample of \Cref{E:maps_A1} provides a counterexample here as well. Specifically, letting $t$ denote the coordinate on $\bG_a$, the compatible family of group homomorphisms $e^{tx} \colon (\bG_a)_{k[x]/x^n} \to (\bG_m)_{k[x]/x^n}$ does not extend to a morphism of group schemes $(\bG_a)_{k\ps{x}} \to (\bG_m)_{k\ps{x}}$.
\end{ex}

As another application of the notion of formal properness, we generalize the well-known fact that the stack of coherent sheaves on a proper scheme is algebraic.
\begin{thm*}[\Cref{thm:moduli}]
Let $R$ be a simplicial commutative $G$-ring, and let $\X$ be an algebraic derived stack which is flat and formally proper over $\Spec(R)$. Then the stack $\underline{\Coh}_{\X/R}$ parameterizing flat families of coherent sheaves (see \Cref{def:stack_sheaves}) is an algebraic derived stack locally almost finitely presented and with affine diagonal over $\Spec(R)$.
\end{thm*}

\subsubsection{Comparison with previous results}\label{ssec:comparison-olsson}

Artin's criteria have been used in several papers to establish the algebraicity of the mapping stack $\inner{\Map}_S(\X,\Y)$, where $S$ is an algebraic space and $\X$ and $\Y$ are algebraic stacks, but always under the hypothesis that $\X \to S$ is proper -- see \cite{Ol06}*{Theorem 1.1}, \cite{Lieblich}*{Section 2.3}, and \cite{hall2014coherent}*{Theorem 1.2} for the strongest result in this direction.\footnote{The results \cite{Lieblich}*{Section 2.3} seem to be missing the hypothesis that $Y$ is flat over $S$. Similarly Aoki has claimed that if $\X / S$ is proper, and $\Y$ is locally finite presentation and $\Y$ is separated or $\Y = B \Gm$ \cites{Ao06a,Ao06b} -- however there appear to be some serious errors in that paper which are not addressed by the erratum. We thank David Rydh for pointing this out to us.} Our approach to Artin's criteria, via the Tannakian formalism, is highly indebted to the algebraicity result \cite{DAG-XIV}*{Thm.~3.2.1} in the context of derived (and spectral) algebraic geometry, which implies algebraicity under the hypothesis that $\X \to S$ is a flat and proper spectral algebraic space, and $\Y$ is a quasi-compact spectral Deligne-Mumford stack with affine diagonal. Our main conceptual contribution is relaxing the hypothesis that the map $\X \to S$ is proper.

In \cite{alper2015luna}, algebraicity is established when $\X \to S$ is a flat good moduli space morphism, using a similar approach, but working in the classical context. A good moduli space morphism is, locally over $S$, cohomologically projective, so this follows from \Cref{thm:derived_mapping_stacks} as well when $\Y$ has quasi-affine diagonal, but \cite{alper2015luna} makes use of a stronger version of Tannaka duality in the classical context \cite{hall2014coherent} which allows one to only assume that $\Y$ has affine stabilizer groups (see \Cref{rem:classical_vs_derived}). The paper \cite{alper2015luna} was developed independently of our work, and was released around the same time as the original version of this paper.

\subsection{Context, conventions, and author's note}
\label{ssec:notation}

Unless we explicitly state otherwise, all of our categories will be $\infty$-categories. The model for $\infty$-categories we have in mind is that of quasi-categories \cite{HigherTopos}, and our model for the $\infty$-category of $\infty$-groupoids or ``spaces'' will be Kan simplicial sets, which we denote $\Sp$. The reader may freely substitute their favorite models for each.

\subsubsection{Context} We will mostly work in the context of derived algebraic geometry, i.e. algebraic geometry over the $\infty$-site of simplicial commutative rings, which we denote $\SCR$. Many of our results hold in the context of spectral algebraic geometry, i.e. algebraic geometry over the $\infty$-site of connective $E_\infty$-algebras $\CAlg^{cn}$. For the most part, we indicate the intended context in the statements of our propositions, and indicate any modifications of the proof required in the different contexts. However, we only establish the algebraicity of mapping stacks, \Cref{thm:derived_mapping_stacks} in the derived context, because at the time of this writing the spectral version of Artin's criteria for stacks does not yet appear in \cite{SAG}.

\subsubsection{Notation} We briefly summarize some of the terminology for the objects we use throughout the paper:
\begin{center}
\begin{tabular}{>{\raggedright\arraybackslash}p{.28\textwidth}p{.62\textwidth}}
\emph{derived prestack}: & any functor $F : \SCR \to \Sp$\\ \hline
\emph{derived $n$-stack}: & a prestack which is local for the \'{e}tale topology on $\SCR$ and such that the $\infty$-groupoid $F(R)$ is $n$-truncated when $R$ is discrete ($n=1$ by default)\\ \hline
\emph{algebraic derived stack}, $\X$: & a derived $1$-stack which admits a surjective morphism $U \to \X$ such that $U$ is a disjoint union of affine derived schemes and the morphism is relatively representable by smooth derived algebraic spaces\\ \hline
\emph{locally noetherian algebraic derived stack}: & $U$ is a disjoint union of noetherian simplicial commutative rings\\ \hline
\emph{qc.qs.}: & quasi-compact and quasi-separated\footnote{This is a special case of the notion of $\infty$-quasi-compact which is an inductive and relative notion: Every map of affine schemes is $\infty$-quasi-compact; a map of functors is $\infty$-quasi-compact if and only if its base-change to every affine scheme is so; and a higher stack $\X/\Spec(R)$ is $\infty$-quasi-compact if it admits an affine atlas $U = \Spec(A) \to \X$ such that $U \times_\X U/\Spec(R)$ is $\infty$-quasi-compact.  If $\X$ is an $n$-stack for some finite $n$, then this is really a finitary condition since high enough diagonals of $\X$ are isomorphisms.}\\ \hline
\emph{noetherian algebraic derived stack}: & $\X$ is qc.qs. and locally noetherian\\
\end{tabular}
\end{center}
We also have the following notions for a morphism $\X \to \Y$ of derived stacks:
\begin{center}
\begin{tabular}{>{\raggedright\arraybackslash}p{.28\textwidth}p{.62\textwidth}}
\emph{relatively algebraic}: & for any map $\Spec(A) \to \Y$, $\X \times_\Y \Spec(A)$ is algebraic \\ \hline
\emph{qc.qs.}: & $\X \times_\Y \Spec(A)$ is algebraic and qc.qs. \\ \hline
\emph{almost finitely presented}: & $\X \times_\Y \Spec(A)$ is quasi-separated and admits a smooth, representable surjection $\Spec(B) \to \X \times_\Y \Spec(A)$ such that $\Spec(B) \to \Spec(A)$ exhibits $B$ as an almost finitely presented $A$-algebra.\\
\end{tabular}
\end{center}

For any prestack $\X$, we let $\QC(\X)$ denote the stable $\infty$-category of quasi-coherent sheaves on $\X$, which is the Kan extension of the functor $\QC : \SCR \to \widehat{\op{Cat}}_{\infty}$ which takes a simplicial commutative ring to its category of (dg-) modules. The same construction allows one to define the category of almost perfect complexes $\APerf(\X)$, perfect complexes $\Perf(\X)$, $n$-finitely presented complexes $\Coh[n](\X)$, etc. We recall these constructions and their basic properties in \autoref{section:qc}.

In addition to working in a derived context, we depart from the usual algebro-geometric literature in some potentially confusing notational conventions:
\begin{enumerate}
  \item We think of our $t$-structures as \emph{homologically indexed}, and write $H_i$ for $H^{-i}$ and e.g., $\tau_{\leq i}$ for $\tau^{\geq -i}$ and $\C_{\leq i}$ for $\C^{\geq -i}$.  In particular, ``bounded above'' will mean \emph{homologically bounded above} (i.e., lying in $\C_{\leq i}$ for some $i$).
  \item We implicitly work with $\infty$-categorical enhancements of various triangulated categories of sheaves.
  \item The symbols $f_*$, $f^*$, etc. will, unless otherwise stated, denote the functors \emph{of $\infty$-categories}. We do not include extra decorations to indicate that they are ``derived,'' and will instead sometimes write e.g., $H_0 \circ   f_*$ for the functor on abelian categories.  (The one exception: We will write $\RGamma$ for global sections, as a reminder that this is global sections of sheaves of spectra and not spaces.)
  \item We use $\Map$ for a mapping \emph{simplicial set} in an $\infty$-category $\cC$. If $\cC$ is stable, we use $\RHom$ to denote any stable enrichment (e.g., in spectra, in chain complexes, or in complexes of sheaves). Finally, we use $\Hom$ to denote the maps in the homotopy category of $\C$.
\end{enumerate}

\subsubsection{Comparison with the classical context}
There are natural functors of $\infty$-categories $\op{Ring} \to \SCR \to \CAlg^{cn}$, which are neither fully faithful nor essentially surjective. Nevertheless, one can left Kan extend stacks along these functors, and left Kan extension preserves algebraic stacks and their formal completions. Also, the formation of $\infty$-categories $\QC(-)$, $\APerf(-),\Perf(-),$ and $\QC(-)^{cn}$ commutes with left Kan extension along these functors (see \Cref{section:qc}). It follows that many statements which are proved for $\QC(-)$ for spectral algebraic stacks automatically imply the corresponding statement for $\QC(-)$ applied to derived or classical stacks. For instance, one can formulate \Cref{defn:complete-closed-immersion}, of a complete closed immersion, in all three contexts, but a classical or derived stack is complete along a closed subset if and only if it is complete when regarded as a spectral stack via left Kan extension.

\subsubsection{Author's note}

We would like to thank Jacob Lurie for some helpful conversations in the early stages of this project. We would also like to thank Jarod Alper, Bhargav Bhatt, Brian Conrad, Jack Hall, Johan de Jong, Dmitry Kubrak, Jon Pridham, and David Rydh for helpful conversations and comments. Shortly after the first version of this paper was made public, Jack Hall, David Rydh, and Jarod Alper released a paper proving similar results on algebraicity of mapping stacks, and these results are strengthened in their followup work \cite{alper2019etale}. Their work was developed independently and concurrently, and we have benefited from very helpful conversations with them as well as an early draft of their followup paper \cite{alper2019etale} while revising this paper. A. Preygel was supported by an NSF Postdoctoral Fellowship, and D. Halpern-Leistner was supported by the NSF postdoctoral fellowship DMS-1303960 and the NSF grant DMS-1601976.

\subsubsection{Further questions}
We show that formally proper morphisms have several of the common and useful properties of proper maps, but we would like to highlight some natural questions to inspire future research:
\begin{quest} Is the property of being formally proper local for the \'etale topology on the target? Also, in \Cref{def:cohomologically_proper}, does it suffice to only consider $\Y = \Spec(R)$ for complete local noetherian rings $R$ over $\S$?
\end{quest}

\begin{quest}
If $\pi : \X \to \Spec(A)$ is formally proper, is there a model for $\X$ over a finitely generated subring $A' \subset A$ which is also formally proper?
\end{quest}

\begin{quest}
If $\X$ is smooth and formally proper over a field of characteristic $0$, does the Hodge-de-Rham spectral sequence for $\X$ degenerate?
\end{quest}


\section{Background and first properties of complete closed immersions}

\subsection{Recollections on formal completions}
\label{section:completions}

Here we review some of the key results we will use from \cite{DAG-XII}*{Sect.~4,5}, noting how they can be slightly strengthened in the context of simplicial commutative rings.

\begin{defn}[\cite{DAG-XII}*{5.1.1}] Suppose that $\X$ is a prestack.  Let $|\X|$ denote the underlying Zariski topological space of points of $\X$. We say that a closed subset $Z \subset |\X|$ is \emph{co-compact} if the inclusion of the complement of $Z$ is a quasi-compact open immersion. Given a co-compact $Z \subset |\X|$, define the \emph{formal completion of $\X$ along $Z$} to be the following prestack
  \[ \oh{\X}(R) := \oh{\X}_Z(R) := \left\{ \eta \in \X(R) \colon \text{such that $\eta : |\Spec(R)| \to |\X|$ factors through $Z$}\right\} \]
\end{defn}

Note that as $|\Spec(\pi_0(R))| \simeq |\Spec(R)|$, $|\X|$ only depends on the underlying classical stack of $\X$. If $\X$ is an algebraic derived stack and $\I \subset H_0(\O_\X)$ is a locally finitely generated ideal sheaf, then $\I$ defines a cocompact closed substack $\Z(\I)$ of the underlying classical stack of $\X$ and hence a cocompact closed subset $Z \subset |\X|$. By abuse of terminology we will call $\oh{\X}_Z$ the completion of $\X$ along $\I$. When $\X$ (and hence $\Z(\I)$) is affine, we have an explicit description of $\oh{X}$:

\begin{prop}\label{prop:cplt-affine} Suppose that $R \in \SCR$ and $I \subset \pi_0(R)$ is a finitely generated ideal.  Let $\X = \Spec(R)$ and $\oh{\X} = \Spf(R)$ be its completion along $I$. Then there exists a tower 
  \[ \cdots \to R_2 \to R_1 \to R_0 \]
  in $\SCR_R$  such that there is an equivalence of prestacks over $\X$
  $$\Spf(R) = \dlim_n \Spec(R_n).$$
  Furthermore, we may suppose that $\pi_0(R_n) \to \pi_0(R_{n-1})$ is surjective for each $i$, and that each $R_n$ is perfect as $R$-module with Tor-amplitude uniformly bounded by the number of generators of $I$.
\end{prop}
\begin{rem} \label{rem:cplt-affine-spectral}
\cite{DAG-XII}*{Lemma~5.1.5} give an analogous statement for $R \in \CAlg$. The only difference in the $E_\infty$ statement is that $R_n$ are almost perfect as $R$ modules. This is, essentially, the one statement in this paper that depends strongly on the fact that we are working with simplicial commutative rings rather than $E_\infty$ rings.
\end{rem}

\begin{proof} The proof of \cite{DAG-XII}*{Lemma~5.1.5} goes over essentially unchanged in simplicial commutative rings, but that now the rings $R_n$ can be assumed to be \emph{perfect} rather than merely \emph{almost perfect}: This follows from the construction of the algebras denote $A(x)_n$ in op.cit., and the fact that in the universal example $\ZZ$ is a perfect module over $\ZZ[x]$ of Tor dimension explicitly bounded by $1$, thanks to the Koszul resolution.   
\end{proof}

Suppose that $S = \Spec(R)$, $I \subset \pi_0(R)$ a finitely generated ideal, and $\oh{S}$ the associated completion.  Let $\pi \colon \X \to S$ be an $S$-stack, $Z \subset |\X|$ the preimage of $Z(I) \subset |S|$, and $\oh{\X}$ the associated completion. Then the natural map
  \[ 
    \oh{\X} \stackrel\sim\longrightarrow \X \times_S \oh{S}  
  \] 
  is an equivalence (just use the functor-of-points descriptions on both sides).  Furthermore, if $\{R_n\}$ is a tower as in \Cref{prop:cplt-affine}, then there is an equivalence
  \[ 
    \oh{\X} \stackrel\sim\longleftarrow \dlim_n \X \times_S \Spec(R_n) 
  \]
  since fiber products preserve filtered colimits in presheaves.  We will generally write write $i_n \colon \X_n = \X \times_S \Spec(R_n) \to \X$ for the base-changed closed immersions.

A natural question is how this compares to the classical notion of completion.  To this end, we have:
\begin{prop} \label{lem:classical_spf}Suppose that $R$ is a noetherian \emph{classical} commutative ring and that $I \subset \pi_0(R)$.  Let $\Spf(R)$ denote the formal completion of the derived stack $\Spec(R)$ along $I$, and let $\Spf^{cl} R$ denote Kan extension of the (classical) prestack $\dlim_n \Spec(R/I^n)$.  Then the natural morphism
  \[ F \colon \Spf^{cl} R \longrightarrow \Spf(R) \]
  is an equivalence.
\end{prop}
\begin{proof} Note first that the natural maps $\Spec(R/I^n) \to \Spec(R)$ factor uniquely through $\Spf(R)$ -- this determines the natural morphism of the proposition.  
  Let $R_n$ be the Koszul-type algebra killing the $n$-th powers of a finite set of generators for the ideal $I \subset R$ (in dg-language this would be $R[B_1,...,B_r]/dB_i = f_i^n$) -- it satisfies the conditions of \Cref{prop:cplt-affine} by the proof of \cite{DAG-XII}*{Lemma~5.1.5}.  Note that $\Spf^{cl} R \isom \dlim \Spec(\pi_0(R_n))$ since $I^{(n-1)r+1} \subset (f_1^n, \ldots,f_r^n) \subset I^n$.  We must thus show that the natural map
 \[ \dlim \Spec(\pi_0(R_n)) \longrightarrow \dlim \Spec(R_n) \]
 is an equivalence.  There is almost an argument for this in \cite{DAG-XII}*{Lemma~5.2.17} -- we must show that the map of pro-objects in (almost perfect) commutative $R$-algebras $ \{R_n\} \to \{ \pi_0(R_n) \}$  is a pro-equivalence.  By the Lemma, we have that
  \[ \{ \tau_{\leq k} R_n \} \longrightarrow \{ \pi_0(R_n) \} \]
is an equivalence of pro-objects in (almost perfect) $R$-modules for all $k$. By the bound on the Tor amplitude in \Cref{prop:cplt-affine}, taking $k>r+1$ establishes the pro-equivalence at the level of $R$-modules. Furthermore, the module level statement does in fact imply the algebra statement (i.e., potential issues with pro-algebras vs algebras in pro-objects don't get in the way) because we may restrict to $(r+1)$-truncated algebras (c.f., the proof of \cite{DAG-XII}*{Lemma~6.3.3}).

\end{proof}

Since fppf morphisms are topological quotient morphisms, we can use this to deduce:
\begin{cor} Suppose that $\X$ is a noetherian classical stack, and that $\I \subset \O_\X$ is an ideal sheaf. Then the formal completion of the Kan extension of $\X$ (from classical rings to derived rings) along $\I$ is the Kan extension of the classical completion of $\X$ along $\I$.
\end{cor}

\begin{rem}
The rings $R_n$ appearing in \Cref{prop:cplt-affine} are \emph{not} unique, in contrast to the $R/I^n$ used to from classical completions -- in particular, they need not globalize. This is related to the fact that our notion of completion $\oh{\X}_Z$ depends only on the underlying subset $Z$ and not on the choice of structure sheaf etc.
  
There \emph{is} a notion of completion along a closed immersion $\Z \to \X$ in derived algebraic geometry more analogous to the usual $\I$-adic completion, i.e., it gives rise to a canonically defined pro-algebra $\oh{\O_\X} = ``\ilim_n" \cO_{\X_n}$ such that $H_0(\O_{\X_n}) = H_0(\O_\X)/\I^n$ where $\I = \ker\{ H_0(\O_\X) \to H_0(\O_\Z)\}$.   However this construction is somewhat involved, especially outside of characteristic zero, and since we do not need it we will not discuss it further.
\end{rem}

\subsubsection{Almost perfect complexes and coherent sheaves}

In this section we will use the $\infty$-category of almost perfect complexes $\APerf(-)$, as well as the ordinary category $\Coh(-)$, for both a derived stack and its formal completion. We refer the reader to \Cref{section:qc} for a reminder on how these categories are defined for arbitrary prestacks.

\begin{lem}\label{lem:artin-rees-cat}[Derived Artin-Rees Lemma] Suppose $R$ is a noetherian simplicial commutative ring, $I \subset \pi_0(R)$ is a finitely generated ideal, and $\oh{R}$ the derived $I$-adic completion of $R$.   Then the natural restriction functor 
\[ \APerf(\oh{R}) \longrightarrow \APerf(\Spf(R))\]
is a $t$-exact equivalence of $\infty$-categories, and hence induces an equivalence of abelian categories $\Coh(\oh{R}) \simeq \Coh(\Spf(R))$.
\end{lem}
\begin{proof} See \cite{DAG-XII}*{Section 4}.
\end{proof}

This globalizes as follows:

\begin{prop}\label{prop:formal-coh-props} Suppose $\X$ is a locally noetherian algebraic derived stack, $Z \subset |\X|$ a co-compact closed subset, and $\oh{\X}$ the completion of $\X$ along $Z$. Furthermore, let $f : \X' \to \X$ be a flat morphism from another locally noetherian algebraic derived stack and $Z' := f^{-1}(Z)$. Then,  \begin{enumerate}
  \item The (ordinary) category $\Coh(\oh{\X})$ is abelian. The induced pullback functor $\Coh(\oh{\X}_{Z}) \to \Coh(\oh{\X'}_{Z'})$ is exact. The exactness of sequences in $\Coh(\oh{\X})$ may be checked on a flat cover.
   \item The $\infty$-category $\APerf(\oh{\X})$ admits a $t$-structure. The induced pullback functor $\APerf(\oh{\X}_Z) \to \APerf(\oh{\X'}_{Z'})$ is $t$-exact.  The property of being connective/co-connective in $\APerf(\oh{\X})$ may be checked on a flat cover of $\X$.  In the affine case, the $t$-structure is described in \Cref{lem:artin-rees-cat}.
    \item  The heart of the $t$-structure in (2) identifies with $\Coh(\oh{\X})$.  More generally, the $\infty$-category of connective $n$-truncated objects $\APerf(\oh{\X})_{\leq n}^{cn}$ naturally identifies with $\Coh[n](\oh{\X})$.
    \item The $t$-structure on $\APerf(\oh{\X})$ is left $t$-complete, and (if $\X$ is quasi-compact) right $t$-bounded.  There is a natural equivalence
      \[ \APerf(\oh{\X})^{cn} \stackrel\sim\longrightarrow \ilim_n \APerf(\oh{\X})^{cn}_{\leq n} \isom \ilim_n \Coh[n](\oh{\X}) \]
  \end{enumerate}
\end{prop}
\begin{proof}  For (1), see e.g., \cite{Conrad-FormalGAGA}*{Sect.~1-2} -- note that this is a statement at the level of classical stacks.
  
  For (2), note first that the claim is fppf local on $\X$: See \cite{DAG-XII}*{Prop.~5.2.4, Rem.~5.2.13} for the case where $\X$ is Deligne-Mumford and \'etale descent.  It remains to show that the $t$-structure is, in fact, fppf local in the affine case: Note that the proofs of Lemmas 5.2.5 and 5.2.7 of op.cit. applies verbatim with \'etale replaced by fppf.

  For (3) and (4), we apply the last points of the following Lemma.
\end{proof}

We record some convenient facts on left $t$-complete $t$-structures that we will use.
\begin{lem}\label{lem:random-t-stuff}\mbox{}
  \begin{enumerate}
        \item Suppose that $\C$ is a stable $\infty$-category with a $t$-structure.  Then, $\C$ is left $t$-complete if and only if the following condition holds:
          Suppose given a tower $\sF_0 \leftarrow \sF_1 \leftarrow \cdots$ in $\C$ such that for every $k \geq 0$, the tower $\tau_{\leq k}(\sF_n) \in \C_{\leq k}$ is eventually constant.  Then, the tower has an inverse limit $\sF$, and for every $k \geq 0$ the natural map $\tau_{\leq k} \sF \to \tau_{\leq k} \sF_n$ is an equivalence for $n \vinograd 0$ (depending on $k$).
        \item Suppose that $F \colon \C \to \D$ is an exact and right $t$-exact functor, and that $\{\sF_n\}$ is a tower in $\C$ satisfying the above conditions.  Then, the tower $\{F(\sF_n)\}$ in $\D$ satisfies the above conditions as well.
        \item Suppose given a limit diagram $i \mapsto \C_i$ and $\C \to \C_i$, of small stable $\infty$-categories with $t$-structures such that all the functors are exact and right $t$-exact.  If each $\C_i$ is left $t$-complete, then so is $\C$.
        \item Suppose given a diagram $i \mapsto \C_i$ of small stable $\infty$-categories with $t$-structures, such that all the functors are exact and $t$-exact.  Then, there is a unique $t$-structure on the limit $\C : = \ilim_i \C_i$ such that the natural functors $\C \to \C_i$ are $t$-exact.   In this case, for each $n \geq 0$ the natural functor
          \[ \C^{cn}_{\leq n} \longrightarrow \ilim_i (\C_i)^{cn}_{\leq n} \]
          is an equivalence.
    \end{enumerate}
\end{lem}
\begin{proof} For (1), note first that left $t$-completeness is equivalent to the assertion for the tower $\sF_n = \tau_{\leq n} \sF$. It remains to suppose that $\C$ is left $t$-complete and prove the desired condition.  Consider the double-tower $\{\tau_{\leq m} \sF_n\}_{m,n}$.  We will show that it has a limit, and evaluate this limit in two different ways ``rows-then-columns'' and the transpose.

  In one direction, we have that
  \[ \sF'_m := \ilim_n \tau_{\leq m} \sF_n  \]
  exists since the diagram is eventually constant by hypothesis.  Furthermore, $\sF'_m \isom \tau_{\leq m} \sF'_n$ for any $n \geq m$ by construction.  Thus
  \[ \sF := \ilim_m \ilim_n \tau_{\leq m} \sF_n = \ilim_m \sF'_m \]
  exists, and $\sF \to \tau_{\leq m} \sF'_m$ induces an equivalence on $\tau_{\leq m}$, since $\C$ is left $t$-complete.  So, the inverse limit over the whole double-tower exists and is also equal to $\sF$.  Computing this in the other direction, we note
  \[ \ilim_m \tau_{\leq m} \sF_n = \sF_n \]
  since $\C$ is left $t$-complete, so that
  \[ \sF \isom \ilim_n \ilim_m \tau_{\leq m} \sF_n = \ilim_n \sF_n. \]
  The assertion that $\tau_{\leq k} \sF \to \tau_{\leq k} \sF_n$ is an equivalence for $n \vinograd 0$ follows by comparing it to $\tau_{\leq k} \sF \to \tau_{\leq k} \sF'_n$ for $n \geq k$. This completes the proof of (1).

  For (2), it is enough to show that $F$ preserves $\tau_{\leq k}$-equivalences to $\tau_{\leq k}$-equivalences.  This follows from the fact that $F$ preserves extension sequences and $k$-connective objects.

  For (3), we apply the criterion in (1).  Notice that a putative limit diagram in $\C$ which gives a limit diagram in each $\C_i$ is itself a limit diagram (though the converse need not hold in general!), so that (2) completes the proof.

  For (4), note that the $t$-structure is characterized by stating that an object of $\C$ is connective (resp. co-connective) if and only if this is true of its image in each $\C_i$.  Since the transition functors are $t$-exact, the truncation functors on each $\C_i$ pass to the limit to provide truncation functors on $\C$.  It is thus straightforward to check that this is a $t$-structure, and the desired description of the connective $n$-truncated objects. Notice also that (3) and (4) are essentially contained in \cite{DAG-XII}*{Remark~5.2.9}.
 \end{proof}

\subsection{Theorem on formal functions in derived algebraic geometry}

Here we describe $\QC$ on a formal completion. This leads to a formulation of the Theorem on Formal functions as a base change result, \Cref{thm:formal_functions}, which applies to arbitrary morphisms of algebraic stacks and complexes of quasi-coherent sheaves.

\begin{lem}\label{lem:completion-bdd} Suppose $R \in \SCR$, $I \subset \pi_0(R)$ is a finitely generated ideal, $\oh{R}$ is the $I$-adic completion of $R$, and $i \colon \Spf(R) \to \Spec(R)$ is the inclusion.  Then, the composite functor
 \[ i_* i^* \colon \QC(\Spec(R)) \longrightarrow \QC(\Spec(R)) \]
 has finite left $t$-amplitude  (i.e., is left $t$-exact up to a shift).
\end{lem}
\begin{proof} Let $\{R_n\}$ be a tower of perfect $R$-algebras having Tor amplitude at most $d$ as in \Cref{prop:cplt-affine}.  Then, we may identify $\QC(\Spec(R)) \isom R\mod$ and $\QC(\Spf(R)) \isom \ilim_n R_n\mod$.  Under these identifications, the functor $i_* i^*$ is identified with
\[ M \mapsto i_* i^* (M) = \ilim_n \left(R_n \otimes_R M\right). \]
If $M \in R\mod_{<0}$ then $R_n \otimes_R M \in R\mod_{<d}$ by hypothesis on $R_n$.  
\end{proof}

We let $(R\mod)^{I-cplt}\subset R\mod$ denote the subcategory of $I$-complete modules \cite{DAG-XII}*{Defn.~4.2.1}.

\begin{lem}\label{lem:completion-equiv} Suppose $R \in \SCR$, $I \subset \pi_0(R)$ is a finitely generated ideal, and $\oh{R}$ is the derived $I$-adic completion of $R$.  Let $i \colon \Spf(\oh{R}) \to \Spec(R)$ be the natural inclusion.  Then the composite
  \[ (R\mod)^{I-cplt} \subset R\mod \isom \QC(\Spec(R)) \stackrel{i^*}\longrightarrow  \QC(\Spf(R)) \]
  is an equivalence.  In particular, the co-unit $i^* i_* \to \id$ is an equivalence and the unit $\id \to i_* i^*$ can be identified with the $I$-adic completion.
\end{lem}
\begin{proof}  Note that this is proved, under the additional hypothesis that we restrict to \emph{connective} objects on both sides, in \cite{DAG-XII}*{Lemma~5.1.10}.  We will prove this stronger result by exploiting our stronger assumptions in the form of \Cref{lem:completion-bdd}.

  We note that $i^*$ has a right adjoint $i_*$.  If we identify $\QC(\Spec(R)) = R\mod$ and $\QC(\Spf(R)) = \ilim_n R_n\mod$, then $i_*$ is given by the inverse limit in $R$-modules
  \[ i_*(\{M_n\}) = \ilim_n M_n \in R\mod. \]

    It is enough to prove two assertions:
  \begin{enumerate}
    \item If $M \in R\mod$, then the natural map 
      \[ M \to i_* i^* M \] identifies $i_* i^* M$ with the $I$-adic completion of $M$.  If $M$ is almost connective, this follows by \cite{DAG-XII}*{Remark~5.1.11}.  Since $R\mod$ is right $t$-complete we may write $M = \dlim \tau_{\geq -k} M$, so and it is enough to show that both $i_* i^*$ and the $I$-adic completion functor $M \mapsto \oh{M}$ preserve this directed colimit.  By \Cref{lem:random-t-stuff}, it is enough to show that both $i_* i^*$ and the $I$-adic completion functor have bounded left $t$-amplitude.  For $i_* i^*$ this is \Cref{lem:completion-bdd}, while for the $I$-adic completion this is \cite{DAG-XII}*{Remark~5.11}.  This shows that $M \to i_* i^* M$ is the $I$-adic completion for all $M \in R\mod$.
    \item Let $\{R_n\}$ be as in \Cref{prop:cplt-affine}.  We must show that if $\{M_n\} \in \ilim_n R_n\mod$ then the unit
      \[ i^* i_* \{M_n\} \longrightarrow \{M_n\} \] is an equivalence.  More concretely, we must show that for each $k$ the natural map
      \[ R_k \otimes_R \ilim_n M_n \longrightarrow M_k \]
      is an equivalence.  Since $R_k$ is perfect over $R$, $R_k \otimes_R$ commutes with limits, so we may identify this with the map
      \[ \ilim_n\left( R_k \otimes_R M_n\right) \isom \ilim_n \left( (R_k \otimes_R R_n) \otimes_{R_n} M_n \right) \longrightarrow M_k. \]

      Recall that $\Spf(\oh{R}) \isom \dlim_n \Spec(R_n)$ and that fiber products preserve filtered colimits of (pre-)sheaves, so that we obtain
      \[ \Spec(R_k) \times_{\Spec(R)} \Spf(\oh{R}) \isom \dlim_n \Spec(R_k \otimes_R R_n) \]
      Furthermore, $\Spec(R_k) \times_{\Spec(R)} \Spf(\oh{R}) \isom \Spec(R_k)$ since $\Spec(R_k) \to \Spec(R)$ factors through the monomorphism $\Spf(R) \to \Spec(R)$.  Thus, the natural map
      \[ M_k \longrightarrow \ilim_n \left( (R_k \otimes_R R_n) \otimes_{R_n} M_n \right)  \]
      is an equivalence by (i).  This completes the proof.\qedhere
  \end{enumerate}
\end{proof}

Finally, we deduce
\begin{thm}\label{thm:qc-spf} Suppose that $\X$ is a derived prestack and $Z \subset |\X|$ is a co-compact closed subset.  Let $i \colon \oh{\X} \to \X$ be the inclusion of the formal completion along $Z$.  Then,
  \begin{enumerate}
    \item $i^*$ admits a left adjoint $i_+$;
    \item The composite
      \[ \QC_Z(\X) \subset \QC(\X) \stackrel{i^*}\longrightarrow \QC(\oh{\X}) \]
      is an equivalence, with inverse given by $i_+$ (i.e., the essential image of $i_+$ is contained in $\QC_Z(\X)$);
    \item $i_+$ is of formation local on $\X$, and satisfies the projection formula (i.e., is a functor of $\QC(\X)$-module categories);
   \end{enumerate}
The composite $i_+ \circ i^*$ can be identified with the functor $\RGamma_Z$, which fits into a fiber sequence $\RGamma_Z \longrightarrow \id \longrightarrow j_* j^*$, where $j$ is the inclusion of the open complement of $Z$.
\end{thm}

\begin{rem} \label{rem:qc-spf-spectral}
For spectral prestacks the theorem holds as stated, but with the categories $\QC(\X)$, $\QC(\oh{\X})$ and $\QC_Z(\X)$ replaced with the corresponding categories of almost connective complexes, $\QC(\X)^{acn}$, $\QC(\oh{\X})^{acn}$, and $\QC_Z(\X)^{acn}$ -- see \cite{DAG-XII}*{Theorem~5.1.9}. The proof is essentially the same in the derived context, combined with \Cref{lem:completion-equiv}.
\end{rem}

\begin{proof} 
It is enough to prove the claims in case $\X$ is affine, for the claims are local by construction (i.e., because we asked that the formation of $i_+$ be local). 

Suppose now that $\X = \Spec(R)$ and $Z$ is cut out by a finitely generated ideal $I \subset \pi_0(R)$.  Let $(R\mod)^{I-nil} \subset R\mod$ denote the full subcategory of locally $I$-nilpotent modules.  By \cite{DAG-XII}*{Prop.~4.2.5} these are equivalent via the completion functor and the ``local cohomology'' functor $\Gamma_I$.  Notice that $i^*$ vanishes on $I$-local modules (i.e., those supported away from $Z$), so that 
\[ i^*(M) \isom i^*(\oh{M}) \isom i^*(\Gamma_I(M)) \]
It thus follows from \Cref{lem:completion-equiv} and the above mentioned equivalence that the composite
\[ (R\mod)^{I-nil} \subset R\mod \stackrel{i^*}\longrightarrow \QC(\Spf(R)) \]
is also an equivalence.  Let $i_+$ be the composite of an inverse to this functor with the inclusion $(R\mod)^{I-nil} \subset R\mod$ -- more explicitly, $i_+ = \Gamma_I \circ i_*$.  Since the inclusion is left adjoint to $\Gamma_I \colon R\mod \to (R\mod)^{I-nil}$ it follows that $i_+$ is left adjoint to $i^*$.

It is easy to check that $i_+$, so defined, is local on $R$: given a cartesian diagram
\[ \xymatrix{
\Spf(R') \ar[r]^{i'} \ar[d]^{\oh{\pi}} & \Spec(R') \ar[d]^\pi \\
\Spf(R) \ar[r]_i & \Spec(R)
} \]
we wish to check that
\[ (i')_+ \oh{\pi}^* (\sF) \longrightarrow \pi^* i_+ (\sF) \] is an equivalence for all $\sF \in \QC(\Spf(R))$.  But by the above we may suppose that $\sF = i^* \sF'$ for some $\sF \in (R\mod)^{I-nil}$, and then notice that $\pi^* \sF' \in (R'\mod)^{I'-nil}$.  The result is then immediate from the equivalence applied upstairs and downstairs.

Finally, the projection formula assertion follows form \cite{DAG-XII}*{4.1.22, 4.2.6}.
\end{proof}

As an application, we have a version of the Theorem on Formal Functions without properness or coherence hypotheses:

\begin{thm}[Theorem on Formal Functions] \label{thm:formal_functions} Suppose given a cartesian diagram of derived stacks for which $\QC$ is presentable (e.g. $\X$ and $\Y$ are algebraic)
 \[ \xymatrix{
 \oh{\X} \ar[d]^{\oh{\pi}} \ar[r]^{i'} & \X \ar[d]^{\pi}\\
 \oh{\Y} \ar[r]_i & \Y \\
 }, \]
where $\oh{\Y}$ is the completion of $\Y$ along a co-compact closed subset $Z \subset |\Y|$.  (It follows that $\oh{\X}$ is the completion of $\X$ along $\pi^{-1}(Z)$.)  Then the base-change map
 \[ i^* \pi_*(\sF) \longrightarrow \oh{\pi}_* (i')^*(\sF) \]
 is an equivalence for all $\sF \in \QC(\X)$.

 In case $\Y = \Spec(R)$ is affine, we may furthermore take global sections and obtain that the map
 \[ \oh{\RGamma(\X,\sF)} \longrightarrow \RGamma(\oh{\X}, \oh{\sF}) \]
 is an equivalence of complete $R$-modules for every $\sF \in \QC(\X)$.
\end{thm}
\begin{rem} For a map of spectral stacks $\pi : \X \to \Y$ which satisfies \CD (so that $\pi_\ast$ preserves almost connective complexes), the same statement is true for any almost connective $\sF \in \QC(\X)^{acn}$ with the same proof, using \Cref{rem:qc-spf-spectral}.\end{rem}

\begin{proof}
  By general non-sense, it is enough to prove that the base-change map of \emph{left} adjoints
  \[  (i')_+ \oh{\pi}^*  \longrightarrow \pi^* i_+\]
  is an equivalence.  By \Cref{thm:qc-spf} the assertion is local on $\Y$ so that we may suppose it is affine.  Also, by \Cref{thm:qc-spf} we know that $i^*$ is essentially surjective so that it is enough to show that map
  \[ (i')_+ (i')^* \pi^* \isom (i')_+ \oh{\pi}^* i^*   \longrightarrow  \pi^* i_+ i^*\]
  is an equivalence.  By another application of \Cref{thm:qc-spf} we see that this is true (since $i_+ i^* \isom \id$ and similarly for $i'$).

The affine statement follows by applying \Cref{lem:completion-equiv}.
\end{proof}

\begin{rem} \label{rem:classical_formal_functions}
If $\Y = \Spec(R)$ with $R$ noetherian and $\pi_\ast(\sF) \in \APerf(R)$, one can recover the usual statement of the Theorem on Formal functions, that $\oh{H_i \RGamma(\sF)} \simeq H_i \RGamma(\oh{\sF})$, by combining \Cref{thm:formal_functions} with \Cref{lem:artin-rees-cat}.
\end{rem}

\subsection{Criteria for completeness}

\begin{prop}[Fully-faithfulness criterion]\label{prop:formal-functions} Let $\pi : \X \to \S$ be a morphism of noetherian algebraic spectral (or derived) stacks, and assume that $\S$ is complete along a closed subset $Z \subset |\S|$. Suppose that $\pi_\ast(\Coh(\X)) \subset \DDAPerf(\S)$ and that $G \in \APerf(\X)$. Then the natural map
  \[ \RHom_\X(F, G) \longrightarrow \RHom_{\oh{\X}}(i^* F, i^* G) \]
  is an equivalence for any $F \in \QC(\X)$, where $i : \oh{\X} \to \X$ is the formal completion of $\X$ along $Z' = \pi^{-1} (Z)$.
\end{prop}

\begin{proof}

  {\noindent}{\it Step 1: Reducing to $F, G$ bounded coherent.}\\

It suffices to prove the assertion under the additional hypothesis that $G \in \DCoh(\X)$ is bounded, because $G = \ilim \tau_{\leq n} G$, the limit being taken in $\QC(\X)^{acn}$, and \Cref{thm:qc-spf} implies that $i^\ast$ preserves limits. Suppose for concreteness, shifting as needed, that $G \in \APerf(\X)^{cn}_{\leq k}$. Let $\C \subset \QC(\X)^{acn}$ denote the full subcategory consisting of those $F$ for which the map 
  \[ \Map_\X(F,G) \longrightarrow \Map_{\oh{\X}}(i^* F, i^* G) \]
  is an equivalence for our fixed $G$.  Note that $\C$ is closed under extensions and arbitrary colimits which exist in $\QC(\X)^{acn}$.  We wish to show that  $\C = \QC(\X)^{acn}$.
  
Note that \Cref{thm:qc-spf} allows us to identify the right hand side with
  \[ \Map_{\oh{\X}}(i^* F, i^* G) \isom \Map_{\X}(\RGamma_{Z'} F, G) \isom \Map_{\X}(F \otimes_{\O_X} \RGamma_{Z'} \O_X, G) \] 
  Furthermore $\RGamma_{Z'} \O_X$ is almost connective, so that tensoring with it is right $t$-exact up to a shift.  So $\C \subset \QC(\X)$ is the subcategory for which the map to \emph{this} is an equivalence.  We see that $\C$ is closed under arbitrary colimits and contains $\QC(\X)_{\geq d}$ for some $d$ -- since $G$ is bounded and tensoring by $\RGamma_{Z'} \O_X$ is right $t$-exact up to a shift.  By \Cref{thm:coh-gen} it is thus enough to show that $\APerf(\X)_{<d} \subset \C$, i.e., that the map
  \[ \Map_\X(F, G) \longrightarrow \Map_\X(F \otimes_{\O_X} \RGamma_{Z'} \O_X, G) \]
  is an equivalence for all $F \in \APerf(\X)_{<d}$.

In particular, we are reduced to proving this in case both $F, G \in \DCoh(\X)$.  

\bigskip

{\noindent}{\it Step 2: Reducing to a fact on inner-Homs.}\\
Taking shifts, we may reduce to the case $G \in \DCoh(\X)_{\leq 0}$ and $F \in \DCoh(\X)_{\geq 0}$.  Let
\[ H = \RHom^{\otimes_{\QC(X)}}_\X(F, G)  \in \QC(\X)\]
be the inner Hom for the monoidal structure on $\QC(\X)$.  Using the definition of inner Hom it is enough to show that
\begin{equation}\label{eqn:base-change-local-coh} \RGamma(X, H) \longrightarrow \RHom_X(\RGamma_{Z'} \O_X, H) 
\end{equation}
is an equivalence.  Note that $H$ is co-connective with coherent homology sheaves by \Cref{lem:bounded_inner_hom}. Thus it is enough to prove this assertion for \emph{any} such $H$, not necessarily arising as an inner Hom.

\bigskip

{\noindent}{\it Step 3: Reducing to a fact on $\S$:}\\
Consider the fiber sequence of quasi-coherent sheaves on $\S$, $\RGamma_Z \O_\S \longrightarrow   \O_\S \longrightarrow j_* \O_\sU$, where $j$ is the open immersion $j \colon \sU = \S \setminus Z \to \S$. Applying $\pi^\ast$ to this sequence, we can use the base change formula to identify the third term with $(j')_\ast \O_{\sU'}$, where $j' \colon \sU' = \X \setminus Z' \to \X$ is the base-changed open immersion. It follows that $\RGamma_{Z'} \O_\X \isom \pi^*(\RGamma_Z \O_\S)$.

This lets us identify the morphism of \Cref{eqn:base-change-local-coh} with the morphism
\[ \RGamma(\S, \pi_* H) \longrightarrow \RHom_\S(\RGamma_Z \O_\S, \pi_* H). \]
Since $H$ was left bounded with coherent homology, the assumption on $\pi$ ensures that has $\pi_* H$ has left bounded, coherent, homology sheaves on $\S$ (see \Cref{rem:reformulate_CP} below).

It is thus enough to show that for \emph{any} $H' \in \QC(\S)$ having coherent homology sheaves the natural map $\RGamma(\S, H') \to \RHom_\S(\RGamma_Z \O_\S, H')$ is an equivalence.

\bigskip

{\noindent}{\it Step 4: Completing the proof.}\\
Note first that, since $\RGamma_Z \O_\S$ is almost connective, we can reduce as above to the case where $H'$ is right bounded and hence almost perfect.  Next, note that $i^* \RGamma_Z \O_\S = \O_{\oh{\S}}$ so that another application of \Cref{thm:qc-spf} yields
\[ \RHom_\S(\RGamma_Z \O_\S, H') \isom \RHom_{\APerf(\oh{\S})}(i^* \O_\S, i^* H') \]
Thus the desired assertion follows by our assumption that $\APerf(\S) \to \APerf(\oh{\S})$ is an equivalence.
\end{proof}

\begin{lem} \label{lem:bounded_inner_hom}
Let $\X$ be a noetherian algebraic spectral (or derived) stack, and let $F \in \APerf(\X)_{\geq 0}$ and $G \in \DDAPerf(\X)_{\leq 0}$. Let $H = \RHom^{\otimes_{\QC(\X)}}_\X(F,G)$ be the inner Hom for the monoidal structure on $\QC(\X)$. Then $H \in \DDAPerf(\X)_{\leq 0}$, and its formation is fppf local on $\X$.
\end{lem}
\begin{proof}
Note that \cite{tsd-mf}*{A.1.1} guarantees that the formation of $H$ is fppf local since $F$ is almost perfect and $G$ is bounded above.  As explained in the proof there, this does not require condition $(\ast)$ from op.cit., since the proof reduces to the affine case.  Now the claim that $H$ is co-connective with coherent homology sheaves is flat local, so in proving it we may suppose that $\X = \Spec(A)$ is affine with $A$ noetherian.  In this case, the proof in op.cit. shows that there is a third quadrant spectral sequence converging to the homology groups of $\H$ whose starting term consists of finite sums of the homology of $G$.  This shows both that the homology of $\H$ is appropriately bounded above and that each homology sheaf is coherent.
\end{proof}

Once one knows fully-faithfulness, one can establish essential surjectivity with the following:

\begin{prop}[Essential surjectivity criterion] \label{lem:characterize_complete_closed_immersions}
Let $\X$ be a noetherian algebraic derived (or spectral) stack, and let $Z \subset |\X|$ be a cocompact closed subset. Let $\oh{\X}$ be the formal completion of $\X$ along $Z$, and assume that $\APerf(\X) \to \APerf(\oh{\X})$ is fully faithful. Then the following are equivalent:
\begin{enumerate}
\item $\X$ is complete along $Z$, i.e. $\APerf(\X) \to \APerf(\oh{\X})$ is an equivalence;
\item for each $m \geq 0$, the induced functor of $(m+1,1)$-categories, $\Coh[m](\X) \to \Coh[m](\oh{\X})$, is an equivalence;
\item $\Coh(\X) \to \Coh(\oh{\X})$ is an equivalence
\item every $F \in \Coh(\oh{\X})$ admits a surjection from the restriction of an object of $\Coh(\X)$.
\end{enumerate}
\end{prop}

\begin{proof}
It is clear that $(1) \Rightarrow (2) \Rightarrow (3) \Rightarrow (4)$. Now assume $(4)$. For any $\oh{F} \in \Coh(\oh{\X})$, one can find $G_0,G_1 \in \Coh(\X)$ along with surjections $i^\ast G_0 \to \oh{F}$ and $i^\ast G_1 \to \ker(i^\ast G_0 \to \oh{F})$. It follows that $\tau_{\leq 0} \cofib(i^\ast G_1 \to i^\ast G_0) \simeq \oh{F}$. But by fully-faithfulness the morphism $i^\ast G_1 \to i^\ast G_0$ is induced by a morphism $G_1 \to G_0$ and we have $\oh{F} \simeq i^\ast \tau_{\leq 0} \cofib(G_1 \to G_0)$. Hence we have shown (3), and (2) follows from this by a simple inductive argument using the $t$-structure and \Cref{prop:formal-coh-props}. Finally for $(2) \Rightarrow (1)$, given $\oh{F} \in \APerf(\oh{\X})$, fully-faithfulness of $i^\ast$ and essential surjectivity on $\APerf(\oh{\X})_{\geq n}$ allow one to lift $\{\tau_{\leq n} \oh{F}\}$ to a tower $\cdots \to F_1 \to F_0$ in $\APerf(\X)$ with $i^\ast(F_n) \simeq \tau_{\leq n} \oh{F}$. Then part (4) of \Cref{prop:formal-coh-props} and part (2) of \Cref{lem:random-t-stuff} guarantee that
\[
i^\ast (\ilim F_n) \isom \ilim i^\ast F_n \isom \oh{F}.
\]
\end{proof}

\subsubsection{Completeness is classical}

The following is an application of \Cref{thm:descent-aperf} below, but we include it here for expository reasons:

\begin{lem} \label{lem:completeness_nil}
Let $\phi: \X \hookrightarrow \X'$ be an almost finitely presented closed immersion of algebraic derived (or spectral) stacks, and $Z \subset |\X'|$ a closed subset. If $\X'$ is complete along $Z$ then $\X$ is complete along $|\X| \cap Z \subset |\X|$, and the converse holds if $\phi$ is surjective.
\end{lem}
\begin{proof}
Note that $\phi_\ast(\cO_\X)$ is a commutative algebra object in the symmetric monoidal $\infty$-category $\APerf(\X')$, and $\APerf(\X) \simeq \phi_\ast(\cO_\X)\mod (\APerf(\X')^{\otimes})$. The same is true for $\oh{\X'}$ and $\oh{\X} = \oh{\X'} \times_{\X'} \X$. So if $\APerf(\X') \to \APerf(\oh{\X'})$ is an equivalence, then so is $\APerf(\X) \to \APerf(\oh{\X})$.

Conversely, assume that $\APerf(\X) \to \APerf(\oh{\X})$ is an equivalence. Consider the Cech nerve $\Cech(\phi) = \X^{\times_{\X'} \bullet + 1}$. Using descent for surjective closed immersions, \Cref{prop:closed-descent-QCcn}, it suffices to show that $$\APerf(\X^{\times_{\X'} n+1}) \to \APerf(\oh{\X^{\times_{\X'} n+1}})$$is an equivalence for all $n$. This follows from the previous paragraph, because $\X^{\times_{\X'} n+1} \to \X$ is an almost finitely presented closed immersion.

\end{proof}

\begin{cor}[Formal properness is classical] \label{cor:classical_coh_properness}
An almost finitely presented morphism of algebraic derived (or spectral) stacks $\pi \colon \X \to \S$ is formally proper if and only if for every \emph{classical} noetherian algebraic $\S$-stack $\S'$ which is complete along a closed subset $Z \subset |\S'|$, the \emph{classical} fiber product $(\X \times_{\S} \S')^{\rm cl}$ is complete along the preimage of $Z$.
\end{cor}
\begin{proof}
This is an immediate consequence of \Cref{lem:completeness_nil}, which implies that any noetherian algebraic derived stack $\X$ is complete if and only if $\X^{\rm cl}$ is complete.
\end{proof}

\subsection{The coherent pushforward property, \texorpdfstring{\CP}{(CP)}}

One property enjoyed by a proper morphism of schemes is that the pushforward and all higher direct images of a coherent sheaf is coherent. In this section we study this as an abstract property of a morphism of stacks.

\begin{defn} \label{def:define_CP}
Let $\X$ be a noetherian algebraic derived (or spectral) stack over a noetherian affine derived (resp. spectral) scheme $\Spec(R)$. We introduce the property
\smallskip
\begin{enumerate} [label=(CP), leftmargin=2cm]
\item[$(CP)_R$:] For any $F \in \DCoh(\X)$ and $i \in \ZZ$ the sheaf $H_i \circ \RGamma(F) \in \QC(R)^{\heart}$ is coherent. \label{property:CP}
\end{enumerate}
\smallskip
We say that a morphism $\pi : \X \to \S$ between derived prestacks satisfies the \emph{coherent pushforward} property, or \CP, if $\pi$ is relatively algebraic and almost of finite presentation, and for any morphism $\Spec(R) \to \S$ with $R$ noetherian, the base change $\X_R$ satisfies \CP[R].
\end{defn}

\begin{rem} \label{rem:reformulate_CP}
Because any $F \in \DCoh(\X)$ is a finite sequence of extensions of shifts of its homology sheaves, \CP[R] is equivalent to the condition $\pi_\ast(\Coh(\X)) \subset \DDAPerf(R)$, and using the right $t$-completeness of $\APerf(\X)$ and the left $t$-exactness of $\pi_\ast$, one sees that it is also equivalent to $\pi_\ast(\DDAPerf(\X))\subset \DDAPerf(R)$.
\end{rem}

For a morphism $\pi : \X \to \S$ of noetherian stacks, we will sometimes abuse notation and write \CP[\S] to denote the property that $\pi_\ast (\DCoh(\X)) \subset \DDAPerf(\S)$. It is an immediate consequence of fppf descent for $\DDAPerf$ that for a morphism $\pi : \X \to \S$ satisfying \CP, $\X_{\S'}$ satisfies \CP[\S'] for any noetherian algebraic derived stack with a map $\S' \to \S$.

\begin{thm}\label{T:cp_fully_faithful}
Let $\pi : \X \to \S$ be a relatively algebraic morphism of derived prestacks. Then the condition that $\pi$ satisfies \CP is local over $\S$ with respect to the fppf topology. Furthermore, if $\pi$ is almost finitely presented, then \CP is equivalent to the ``fully faithful'' part of \Cref{def:cohomologically_proper} (formal properness), namely:
\begin{itemize}[leftmargin=*]
\item[] For any noetherian algebraic derived stack $\S'$ which is complete along $Z \subset |\S'|$, and any morphism $\S' \to \S$, if $\X' := \X \times_\S \S'$ and $Z' \subset |\X'|$ is the preimage of $Z$, then the restriction map $\APerf(\X') \to \APerf(\oh{\X'})$ is fully faithful.
\end{itemize}
In particular, every formally proper morphism satisfies \CP.
\end{thm}

\begin{lem} \label{lem:coh_prop_implies_CP}
Let $\pi : \X \to \Spec(R)$ be a morphism between noetherian algebraic derived stacks such that \[\APerf(\X_{R\ps{t}}) \to \APerf(\oh{\X}_{R\ps{t}})\] is fully faithful, where $\oh{\X}_{R\ps{t}}$ denotes the formal completion of $\X_{R\ps{t}}$ along the preimage of the ideal $(t) \subset \pi_0(R\ps{t})$, then $\pi$ satisfies \CP[R].

\end{lem}

\begin{proof}
Let us denote $S = \Spec(R\ps{t})$ and $R_n = R\ps{t}/(t^n)$. Let $F \in \DCoh(\X)$, and consider the $R\ps{t}$-module $E:= (\pi_S)_\ast (F|_{\X_S}) \in \QC(S)_{\leq 0}$. Applying the base change lemma \Cref{lem:push-bdd-above} to the finite Tor amplitude inclusions $\Spec(R_n) \hookrightarrow S$ and the fully-faithfulness hypothesis of the lemma, we have
$$
E \simeq \RHom_{\X_S}(\cO_{\X_S},F|_{\X_S}) \xrightarrow{\simeq} \RHom_{\oh{\X}_S}(\cO_{\oh{\X}_S},F|_{\oh{\X}_S}) \simeq \ilim R_n \otimes_{R\ps{t}} E,
$$
so $E$ is complete as an $R\ps{t}$-module, i.e. $E \xrightarrow{\simeq} \oh{E}$ is an equivalence. Applying base change to the flat map $S \to \Spec(R)$ we have $E \simeq R\ps{t} \otimes_R \pi_\ast(F)$.

It therefore suffices to show that a complex $M \in R\mod_{\leq 0}$ such that $R\ps{t} \otimes_R M$ is complete must have coherent homology. By \cite{DAG-XII}*{Thm.~4.2.13} it suffices to prove this for $M \in \QC(R)^\heart$, for which we may assume that $R$ is classical. Express $M$ as a filtered union of coherent submodules $M = \bigcup M_\alpha$. If $M$ is not coherent, then this union does not stabilize, so we can find a sequence $m_0,m_1,\ldots \in M$ which is not contained in any $M_\alpha$. Then $R[[t]] \otimes_R M = \bigcup R[[t]] \otimes_R M_\alpha$, but the element $\sum t^i m_i \in H_0( (R[[t]] \otimes_R M)^\wedge)$ does not lie in the image of any of the $R[[t]]\otimes_R M_\alpha$, and so $R[[t]] \otimes_R M$ does not surject onto its completion. This contradicts the hypothesis that $R\ps{t} \otimes M$ is complete.
\end{proof}

\begin{proof}[Proof of \Cref{T:cp_fully_faithful}]
In order to show locality, consider an algebraic fppf morphism $\S' \to \S$, and let $\pi^\prime : \X^\prime := \X \times_\S \S^\prime \to \S^\prime$. We must show that $\pi$ satisfies \CP if $\pi^\prime$ does -- the other direction follows by definition. Note that the condition of being almost of finite presentation is local on the target for the fppf toplogy. For any map $\Spec(R) \to \S$ for a noetherian $R\in \SCR$, there exists an fppf map $\Spec(R^\prime) \to \Spec(R)$ such that $\Spec(R^\prime) \to \Spec(R) \to \S$ factors through $\S^\prime$. Because $\pi^\prime : \X^\prime \to \S^\prime$ satisfies \CP, we have that \CP[R^\prime] holds, and thus so does \CP[R] by flat base change (\Cref{lem:push-bdd-above}) and faithfully flat descent.

The fact that the condition in the statement implies \CP follows from \Cref{lem:coh_prop_implies_CP}, and the converse is \Cref{prop:formal-functions}.
\end{proof}

\begin{prop} \label{P:universally_closed}
Let $\S$ be a noetherian derived stack. Then any morphism $\pi \colon \X \to \S$ satisfying \CP, and in particular and formally proper morphism, is universally closed.
\end{prop}

\begin{lem} \label{L:noetherian_universally_closed}
Let $\pi : \X \to \Spec(R)$ be a finite type map of classical algebraic stacks, with $R$ noetherian. Then $\pi$ is universally closed if and only if $|\X_A| \to |\Spec(A)|$ is closed for any finite type $R$-algebra $A$.
\end{lem}
\begin{proof}
It suffices to show that for any $R$-algebra $A$, the base change $\X_A \to \Spec(A)$ is closed. By \Cref{thm:coh-gen} $\QC(\X)^\heart$ is a locally noetherian $R$-linear abelian category in the sense of \cite{popescu}*{Sect.~5.8}. Note also that $\QC(\X_A)^\heart$ is the category of $A$-module objects in $\QC(\X)^\heart$, and it is a locally finitely presentable (i.e. compactly generated) abelian category. In fact, it is generated by the pullbacks of coherent sheaves on $\X$, which are finitely presented objects of $\QC(\X_A)^\heart$ \cite{artinzhang}*{B3.17, B5.1i}. We write $A = \bigcup_\alpha A_\alpha$ as a filtered union of finitely generated $R$-algebras $A_\alpha$. Then pullback induces an equivalence of categories of finitely presented objects
\[
(\QC(\X_A)^\heart)^{fp} \xrightarrow{\simeq} \colim_\alpha (\QC(\X_{A_\alpha})^\heart)^{fp}.
\]

If $\Z \hookrightarrow \X_A$ is a finitely presented closed immersion, then $\cO_\Z = \op{coker}(M \to \cO_{\X_A})$ for some finitely presented quasi-coherent sheaf $M$. Then the map $M \to \cO_{\X_A}$ is the pullback of a map $M_\alpha \to \cO_{\X_{A_\alpha}}$ in $\QC(\X_{A_\alpha})^\heart$ with $M_\alpha$ finitely presented. In particular $\Z$ is the preimage of a closed substack $\Z_\alpha \hookrightarrow \X_{A_\alpha}$ 

Then we have a diagram of topological spaces, in which the square is cartesian
\[
\xymatrix{ |\Z| \ar@{->>}[r] & |\Z_\alpha| \times_{|\Spec(A_\alpha)|} |\Spec(A)| \ar[r] \ar[d] & |\Z_\alpha| \ar[d]^{\pi_\alpha} \\
& |\Spec(A)| \ar[r]^f & |\Spec(A_\alpha)| }
\]
It follows that $\pi(|\Z|) = f^{-1}(\pi_\alpha(|\Z_\alpha|))$, and hence $\pi(\Z)$ is closed, because by hypothesis $\pi_\alpha$ is a closed map.

So we have shown that the image under $\pi$ of the underlying subset of a finitely presented closed immersion is closed in $|\Spec(A)|$, and we must show that the same is true for the subset underlying \emph{any} closed immersion $\Z \hookrightarrow \X_A$. We know that $\cO_\Z = \op{coker}(M \to \cO_{\X_A})$ for some $M \in \QC(\X_A)^\heart$. Writing $M = \colim_\alpha M_\alpha$ as a colimit of finitely presented quasi-coherent sheaves, we can write $\Z$ as a cofiltered intersection of schemes $\Z = \cap_\alpha \Z_\alpha$, where $\cO_{\Z_\alpha} = \op{coker}(M_\alpha \to \cO_{\X_A})$, hence $\Z_\alpha \to \X_A$ is a finitely presented closed immersion.

We claim that $\pi(|\Z|) = \cap_\alpha \pi(|\Z_\alpha|)$. The inclusion $\pi(|\Z|) \subset \cap_\alpha \pi(|\Z_\alpha|)$ is clear. If $p \in \pi(|\Z|) \subset |\Spec(A)|$ is a point, represented by a map $\Spec(K) \to \Spec(A)$ for some field $K$, then $\Z_K \subset \X_K$ is non-empty. Because $\X_K$ is noetherian, the cofiltered system $(\Z_\alpha)_K$ is eventually constant, i.e. $\Z_K = (\Z_\alpha)_K$ for $\alpha$ sufficiently large. It follows that $p \in \cap_\alpha \pi(|\Z_\alpha|)$, hence $\pi(|\Z|) = \cap_\alpha \pi(|\Z_\alpha|)$. This shows that $\pi(|\Z|)$ is closed, because we have already shown that $\pi(|\Z_\alpha|)$ is closed for all $\alpha$.
\end{proof}

\begin{proof}[Proof of \Cref{P:universally_closed}]
Being universally closed is local for the smooth topology, so it suffices to assume $\S = \Spec(R)$ is an affine Noetherian derived scheme and show that for any $R$-algebra $A$ the map $|\X_A| \to |\Spec(A)|$ is closed. This is a statement about underlying classical stacks, so by \Cref{L:noetherian_universally_closed} it suffices to consider only almost finitely presented discrete $R$-algebras $A$. So it suffices to show that if $\pi : \X \to \Spec(A)$ is an almost finitely presented map of noetherian classical stacks which satisfies \CP, then it is closed.

By Chevalley's theorem the image of $\pi$ is constructible, so $\pi$ factors through a closed subscheme of $\Spec(A)$ in which it has a dense image. We replace $\Spec(A)$ with this closed subscheme, so that we may assume $\pi : \X \to \Spec(A)$ is dense, and we must show that it is surjective. We can choose any point $p \in |\Spec(A)|$, and consider the base change to the local ring $A_p$.

It suffices to show that $p$ lies in the image of $\pi_{A_p} \colon \X_{A_{p}} \to \Spec(A_p)$ assuming the image of this map contains a dense open subscheme $U \subset \Spec(A_p)$. We can choose a prime contained in $U$ which has dimension $1$. Its closure defines a closed subscheme of dimension $1$, $Z \subset \Spec(A_p)$, whose special point is $p$, and whose generic point lies in $U$ and hence the image of $\pi_{A_p}$. It suffices to consider the base change $\pi_Z : \X_Z \to Z$, and to show that $p \in \im(\pi_Z)$.

If the special point $p \in Z$ did not lie in the image of $\pi_Z$, then $\pi_Z$ would factor through the generic point $\X_Z \to Z \setminus \{p\} \to Z$. The latter morphism is affine, hence $t$-exact, and it is immediate that the pushforward of \emph{any} non-zero coherent sheaf is no longer coherent. This contradicts the property \CP for the map $\pi_Z$, because $(\pi_Z)_\ast(\cO_{\X_Z}) \in \DDAPerf(Z)$ is a non-zero object with coherent cohomology which we have argued is pushed forward from $Z \setminus \{p\}$. It follows that we must have $p \in \im(\pi_Z)$.

\end{proof}

\Cref{T:cp_fully_faithful} shows that \CP is ``half'' of the condition of formal properness, so it is desirable to have a more direct criterion for \CP:

\begin{prop}\label{prop:CP_quick_test} Let $\pi : \X \to \S$ be an almost finitely presented morphism of algebraic derived (or spectral) stacks satisfying \CD, and assume $\S$ is quasi-compact with affine diagonal. Then \CP is equivalent to \CP[\S] for the morphism $\pi$.
\end{prop}

\begin{proof}
Let $S \to \S$ be a morphism from a noetherian affine derived scheme, and let $\X^\prime := \X \times_\S S$. By taking shifts it suffices to show that \CP[\S] implies that $\pi^\prime_\ast(\APerf(\X^\prime)_{\geq d}) \subset \APerf(S)$ for any fixed $d$. The base-change formula, \Cref{prop:push-CD}, shows that this holds on the full subcategory of $\APerf(\X')$ spanned by derived pullbacks from $\APerf(\X)$. Since $\X' \to \X$ is affine, any $F \in \APerf(\X^\prime)_{\geq d}$ admits the cobar simplicial resolution by pullbacks of objects in $\APerf(\X)_{\geq d}$. Let $d$ be bigger than the universal cohomological dimension of $\pi$, so that each term in the pushforward simplicial resolution is connective. The result follows by noting that $\APerf(S)_{\geq 0}$ is preserved by geometric realizations, by a straightforward spectral sequence argument.
\end{proof}


\section{\texorpdfstring{$h$}{h}-descent Theorems}
\label{section:h-descent}

In this section we establish a general ``$h$-descent'' theorem, \Cref{thm:descent-aperf} for $\APerf$ and $\Perf$ for later use, and we believe it is of independent interest. It is similar to -- and can be deduced, in characteristic $0$ and in the presence of a dualizing complex, from -- the $h$-descent theorem for $\Ind\DCoh$ in \cite{tsd-mf} and in \cite{GaitsgoryIndCoh}. Nevertheless, the proof is more elementary since it avoids use of the shriek pullback functors.

As an interesting consequence, we show in \Cref{prop:stack-h} that locally noetherian algebraic spectral stacks with quasi-affine diagonal are sheaves for the spectral $h$-topology. This is in contrast to the classical $h$-topology, for which the functor represented by an affine scheme is \emph{not} a sheaf.

We state most of the results in this section for derived stacks, but all of the results hold in the spectral context as well (except for \Cref{prop:stack-h}, which we only prove for spectral stacks). In the proofs, we indicate the modifications needed to prove the claim in both contexts. Throughout this section, we fix a base noetherian simplicial commutative algebra $\kk \in \SCR$, and in the spectral context we use the same symbol to denote a base $E_\infty$-algebra $\kk \in \CAlg^{cn}$.

\subsection{Descent pattern for closed immersions}
The goal for this subsection is to investigate a general descent pattern for what one might call the \emph{(derived) nil-immersion topology}. This will be the Grothendieck topology on $\SCR_\kk$ generated by surjective almost finitely presented closed immersions.

\begin{lem}\label{lem:descent-nil} Suppose that $\sF$ is a presheaf on $\SCR_\kk$ which satisfies
  \begin{enumerate}[label=(\roman*)]
    \item (``nilcomplete'') The natural map $\sF(R) \longrightarrow \ilim_n \sF(\tau_{\leq n} R)$ is an equivalence for all $R \in \SCR_\kk$;
    \item (``infinitesimally cohesive'') For every pullback diagram
      \[ \xymatrix{
     A' \ar[r] \ar[d] & A \ar[d]\\
     B' \ar[r] & B \\
     } \] in $\SCR_\kk$ for which the maps $\pi_0 A \to \pi_0 B$ and $\pi_0 B' \to \pi_0 B$ are surjections whose kernels are nilpotent ideals, the induced map 
     \[ \sF(A') \longrightarrow \sF(A) \times_{\sF(B)} \sF(B') \]
     is an equivalence.  (In fact, it is enough to require this only when the pullback diagram is a square-zero extension in the sense of \cite{HigherAlgebra}*{Section~8.4}.)
  \end{enumerate}
    Let $\pi\colon Z = \Spec(R') \to X = \Spec(R)$ be an almost finitely presented, surjective, closed immersion, and $\Cech(\pi)$ its Cech nerve. Then the natural pullback functor
  \[ \sF(R) \longrightarrow \Tot\{\sF(\Cech(\pi))\} = \Tot\left\{ \sF(R'^{\otimes_R \bullet+1}) \right\} \]
  is an equivalence. The analogous claim is true for spectral stacks as well.
\end{lem}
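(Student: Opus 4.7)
The plan is to prove the Lemma by a sequence of reductions leveraging the two hypotheses, ultimately reducing to the trivial case in which $\pi$ is an equivalence.

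First I would reduce to the case where $R$ is discrete. By hypothesis (i) applied to $\sF(R)$ and termwise to $\sF(R'^{\otimes_R (\bullet+1)})$, together with the fact that $\Tot$ commutes with inverse limits, we may assume $R$ is $n$-truncated for some $n$. The Postnikov step $\tau_{\leq n+1}R \to \tau_{\leq n}R$ is a derived square-zero extension, expressible as a pullback against the trivial square-zero extension $\tau_{\leq n}R \oplus \pi_{n+1}(R)[n+2]$. Infinitesimal cohesion (ii) propagates this pullback structure to $\sF$, and termwise to the Cech nerve, since derived base change commutes with the pushouts computing each tensor power. Iterating downward in $n$ reduces us to $R$ discrete.

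Next, with $R$ discrete, the ideal $I = \ker(R \to R')$ is finitely generated (by finite presentation of $\pi$) and contained in the nilradical (since $\pi$ is surjective on topological spaces), hence $I^N = 0$ for some $N$. I would induct on $N$ using the filtration $R = R/I^N \to R/I^{N-1} \to \cdots \to R/I = R'$. Each step $R/I^{k+1} \to R/I^k$ is a classical square-zero extension with kernel $I^k/I^{k+1}$, which may be expressed as a derived square-zero extension, so infinitesimal cohesion again applies. Applied termwise to the Cech nerve, this reduces us to $N = 1$: then $\pi$ is the identity, $\Cech(\pi)$ is the constant cosimplicial diagram on $R$, and the totalization trivially yields $\sF(R)$.

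The main obstacle is the bookkeeping for Cech-nerve compatibilities: in each reduction step, one must verify that for a square-zero extension $A' \to A$ compatible with a closed immersion $\pi_A : \Spec(A/J) \to \Spec A$ (and its lift $\pi_{A'}$), each term of $\Cech(\pi_{A'})$ is the derived pullback of the corresponding term of $\Cech(\pi_A)$ against the trivial extension. This amounts to the fact that coproducts in $\CAlg^{cn}$, which compute derived tensor products, preserve these pullback squares; combined with termwise infinitesimal cohesion on $\sF$, this yields the required reductions, and the double induction on truncation level and nilpotency degree then completes the argument.
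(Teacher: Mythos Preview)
Your approach is essentially the paper's, and the overall structure (reduce to $R$ discrete via nilcompleteness and Postnikov square-zero extensions, then filter by powers of $I$) is correct. However, there is a genuine gap in your base case.

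You write ``$R/I = R'$'' and conclude that at $N=1$ the map $\pi$ is the identity. This is false in the derived setting: even after $R$ is made discrete, $R'$ need not be discrete. A closed immersion only forces $\pi_0 R \twoheadrightarrow \pi_0 R'$, so $\pi_0 R' = R/I$, but $R'$ can have higher homotopy (e.g.\ $R=k$, $R'=k\oplus k[1]$). Thus your filtration $R \to R/I^{N-1} \to \cdots \to R/I$ does \emph{not} factor $\pi$, and the base case is not the identity. The paper repairs this in two ways you are missing. First, the genuine base case is that $\pi$ \emph{admits a section}: once you have base-changed down to $R/I$, the truncation map $R' \to \pi_0 R' = R/I$ is an $R$-algebra map, so $\pi_{R/I}$ has a section and the augmented cosimplicial object is split. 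Second, and more structurally, the paper isolates a ``refinement'' principle: if a composite $f\circ g$ satisfies descent, then $f$ does (reduce to the split case via the diagonal). This lets one replace $\pi$ by $\Spec(\pi_0 R') \to \Spec R' \xrightarrow{\pi} \Spec R$, which is now a map of classical affines and genuinely factors through the $I$-adic tower.

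A smaller imprecision: your reduction to $R$ $n$-truncated via ``nilcompleteness applied termwise'' is not quite what is needed. Applying (i) to each $R'^{\otimes_R k}$ gives $\ilim_m \sF(\tau_{\leq m}(R'^{\otimes_R k}))$, but these truncations are not the Cech nerve of anything. What you actually need is $\sF(R'^{\otimes_R k}) \simeq \ilim_n \sF\bigl((\tau_{\leq n} R)\otimes_R R'^{\otimes_R k}\bigr)$, i.e.\ compatibility with the base-changed Cech nerves. The paper obtains this by a double-limit trick: evaluate $\ilim_{n,m}\sF(\tau_{\leq m}((\tau_{\leq n}R)\otimes_R R'^{\otimes_R k}))$ in two orders, using that the tower $\{(\tau_{\leq n}R)\otimes_R R'^{\otimes_R k}\}_n$ is eventually constant on each truncation since $R'^{\otimes_R k}$ is connective.
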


Before giving the proof of this lemma, let us make some remarks and state a corollary:

\begin{rem} There is a slight variant of the above where, rather than considering a single surjective closed immersion, we consider a finite family of closed immersions which are jointly surjective.  To handle this one can, for instance, replace (ii) by an arbitrary pushout diagram along closed immersions.  This is, in principle, convenient -- but will be subsumed in our application by $h$-descent.
\end{rem}

\begin{rem}
Note that in the derived context there is a difference between \emph{descent}, i.e., for Cech covers, and \emph{hyper-descent}, i.e., for hyper-covers that are not $n$-coskeletal for any $n$.  This difference is \emph{severe} for surjective closed immersions: Indeed the map from the constant simplicial diagram $\left\{\Spec(\pi_0(R)^{red}) \right \} \to \Spec(R)$ is a hyper-cover for the derived nil-immersion topology, and this shows that a presheaf $\sF$ is a hypersheaf for the derived nil-immersion topology if and only if $\sF(R) \to \sF(\pi_0(R)^{red})$ is an equivalence for all $R \in \SCR_{\kk}$.

\end{rem}

\begin{cor}\label{cor:cplt-Z} Let $\sF$ satisfy the conditions of \Cref{lem:descent-nil}, and suppose now only that $\pi \colon Z \to X$ is an almost finitely presented closed immersion, but not necessarily surjective.  Then the natural pullback functor gives an equivalence
  \[ \sF(\Spf(R)) \longrightarrow \Tot\{\sF(\Cech(\pi))\}.\]
\end{cor}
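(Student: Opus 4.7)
The plan is to reduce to \autoref{lem:descent-nil} by approximating $\Spf R$ by honest derived closed subschemes. By \autoref{prop:cplt-affine}, write $\Spf R \simeq \colim_n \Spec R_n$ as prestacks, for a filtered tower of perfect $R$-algebras with each $R \to R_n$ a finitely presented surjection whose kernel is nilpotent modulo a power of $I$ (so $\Spec R_n$ is set-theoretically supported on the image of $\pi$). Since $\pi$ therefore factors through each $\Spec R_n$, we obtain induced maps $\pi_n \colon Z \to \Spec R_n$; each $\pi_n$ is a finitely presented surjective closed immersion, so \autoref{lem:descent-nil} applies to give
\[ \sF(R_n) \xrightarrow{\sim} \Tot \sF(\Cech(\pi_n)). \]

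The crux is to identify the Cech nerve of $\pi$ with the colimit (in prestacks) of the Cech nerves $\Cech(\pi_n)$. First, $\Spf R \hookrightarrow \Spec R$ is a monomorphism of prestacks: for an affine test $T$, $\Spf R(T) = \colim_n \Map_\CAlg(R_n, T)$ is the $(-1)$-truncated subspace of $\Map_\CAlg(R, T) = \Spec R(T)$ consisting of maps that factor through some $R_n$. Since $\pi$ factors through this monomorphism, iterated fiber products over $\Spec R$ and $\Spf R$ agree, giving $\Cech(\pi)_k \simeq \Spec R' \times_{\Spf R} \cdots \times_{\Spf R} \Spec R'$. Using that colimits in the $\infty$-topos of prestacks are universal (i.e.\ commute with finite limits), we may distribute the colimit $\Spf R = \colim_n \Spec R_n$ across each fiber product to obtain $\Cech(\pi)_k \simeq \colim_n \Cech(\pi_n)_k$ for each $k$.

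Extending $\sF$ to prestacks via right Kan extension sends colimits to limits, so we obtain $\sF(\Spf R) \simeq \lim_n \sF(R_n)$ and $\sF(\Cech(\pi)_k) \simeq \lim_n \sF(\Cech(\pi_n)_k)$ for each $k$. Combining with the termwise descent equivalences from \autoref{lem:descent-nil} and interchanging limits yields the chain
\[ \sF(\Spf R) \simeq \lim_n \sF(R_n) \simeq \lim_n \Tot \sF(\Cech(\pi_n)) \simeq \Tot \lim_n \sF(\Cech(\pi_n)) \simeq \Tot \sF(\Cech(\pi)), \]
which is the desired equivalence. The main conceptual obstacle is the prestack identification $\Cech(\pi)_k \simeq \colim_n \Cech(\pi_n)_k$, resting on the monomorphism property of $\Spf R \hookrightarrow \Spec R$ and the universality of colimits; once this is settled, the remainder is a formal juggling of limits with a termwise application of the lemma.
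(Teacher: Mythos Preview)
Your overall strategy—approximate $\Spf R$ by the $\Spec R_n$, apply \autoref{lem:descent-nil} levelwise, and pass to the limit—is the right one and matches the paper. However, there is a genuine gap in the step where you claim $\Cech(\pi)_k \simeq \colim_n \Cech(\pi_n)_k$ with $\pi_n \colon Z \to \Spec R_n$ the \emph{factorization} of $\pi$. Universality of colimits in a presheaf $\infty$-topos lets you distribute a colimit sitting in one \emph{factor} of a fiber product, not in the \emph{base}: it says $Y \times_B (\colim_i W_i) \simeq \colim_i (Y \times_B W_i)$ for a fixed $B$, not $Y \times_{\colim_i B_i} Y \simeq \colim_i (Y \times_{B_i} Y)$. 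The latter fails here because the closed immersions $\Spec R_n \hookrightarrow \Spec R$ are \emph{not} monomorphisms in the derived sense, so two maps $R' \rightrightarrows T$ that agree over $R$ need not agree over $R_n$. Concretely, if $f \in R$ is a nonzerodivisor and $R' = R/\!\!/f$, then $R' \otimes_R R'$ is $1$-truncated while each $R' \otimes_{R_n} R'$ has unbounded homotopy groups, so the prestack colimit of the $\Spec(R' \otimes_{R_n} R')$ cannot recover $\Spec(R' \otimes_R R')$.

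The fix is exactly what the paper does: apply \autoref{lem:descent-nil} to the \emph{base change} $\pi \times_{\Spec R} \Spec R_n \colon Z \times_{\Spec R} \Spec R_n \to \Spec R_n$ rather than to a factorization. The resulting Cech nerves are $\Spec(R'^{\otimes_R (k+1)} \otimes_R R_n)$, and the identification $\colim_n \Spec(R'^{\otimes_R(k+1)} \otimes_R R_n) \simeq \Spec(R'^{\otimes_R(k+1)})$ is immediate from the fact that $\Spec(R'^{\otimes_R(k+1)}) \to \Spec R$ already factors through the monomorphism $\Spf R \to \Spec R$—precisely the property you correctly isolated. Indeed, if you rerun your universality-of-colimits argument carefully, writing $Z \simeq \colim_n (Z \times_{\Spec R} \Spec R_n)$ and distributing this colimit into one factor of $Z \times_{\Spf R} \cdots \times_{\Spf R} Z$ at a time, you will land on this base-change formulation rather than the factorization one.
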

\begin{proof} Recall that $\Spf(R) \isom \dlim \Spec(R_n)$ for $\{R_n\}$ as in \Cref{prop:cplt-affine} (we do not need the finite Tor amplitude assumption here, so this works in the spectral setting as well by \Cref{rem:cplt-affine-spectral}), so that
  \[ \sF(\Spf(R)) \isom \ilim_n \sF(R_n) \] 
  and applying \Cref{lem:descent-nil} to the base change of $\pi$ to each $\Spec(R_n)$ we obtain that
  \[ \sF(R_n) \isom \Tot\{\sF(\Cech(\pi \times_{\Spec(R)} \Spec(R_n)))\}. \]  
  Taking the inverse limit of these equivalences we complete the proof, since
  \[ \ilim_n \sF(R'^{\otimes_R \bullet+1} \otimes_R R_n) \isom \sF(\Spec(R'^{\otimes_R \bullet+1}) \times_{\Spec(R)} \Spf(R)) \isom \sF(R'^{\otimes_R \bullet+1}) \]
  as $\Spec(R'^{\otimes_R \bullet+1}) \to \Spec(R)$ factors through the monomorphism $\Spf(R) \to \Spec(R)$.
\end{proof}

We now give a proof of the lemma:
\begin{proof}[Proof of \Cref{lem:descent-nil}]
  Our plan of proof is as follows: Let $\C$ denote the class of all almost finitely presented nil-thickenings $i$ of affine derived $\kk$-schemes, such that every base change of $i$ satisfies the conclusion of the lemma.  We will show that $\C$ consists of all almost finitely presented nil-thickenings by showing it contains increasingly large classes of morphisms.

  \bigskip

  {\noindent}{\bf Step 0: Case of $\pi$ with a section.} In this case, the augmented cosimplicial diagram
  \[ \sF(\oh{X}) \longrightarrow \left\{ \sF(Z^{\times_X\bullet+1}) \right\} \]
  extends to a \emph{split} augmented cosimplicial diagram.  And any split augmented cosimplicial diagram is a limit diagram.

  \bigskip

  {\noindent}{\bf Step 1: Case of square-zero extensions.} Suppose that $\pi$ fits into a pushout square
  \[ \xymatrix{
  \Spec(R) & \ar[l]_\pi \Spec(R')  \\
  \Spec(R') \ar[u]& \ar[l] \Spec(R') \oplus M[1] \ar[u]\\
  } \]
  for some connective almost perfect $R'$-module $M$ -- i.e., $\pi$ is a square-zero extension. Then by hypothesis 
  \[ \sF(R) \stackrel\sim\longrightarrow \sF(R') \times_{\sF(R' \oplus M[1])} \sF(R'). \]
  Furthermore, square-zero extensions are stable under base change, so there are also equivalences
  \[ \sF(R'^{\otimes_R \bullet}) \stackrel\sim\longrightarrow \sF(R'^{\otimes_R \bullet} \otimes_R R') \times_{\sF(R'^{\otimes_R \bullet} \otimes_R (R' \oplus M[1]))} \sF(R'^{\otimes_R \bullet} \otimes_R R') \]
  Taking totalizations, and commuting totalizations and inverse limits, we conclude that to show the conclusion for $\pi_R$ it suffices to know it for each of $\pi_{R'}$ and $\pi_{R' \oplus M[+1]}$.  But each of these belong to $\C$ by Step 0.  Thus, the conclusion holds for any square-zero extension $\pi_R$.  Thus, square-zero extensions lie in $\C$.

  \bigskip

  {\noindent}{\bf Step 2: Composites, refinements, and locality.}
  Note that $\C$ is closed under composites -- this is a formal argument with computing a totalization of a bi-cosimplicial space along the diagonal.  
  
  It is also ``local'' in the following sense: Suppose that $\pi\colon Z \to \Spec(R)$ is in $\C$, then a map $i \colon Z' \to \Spec(R)$ is in $\C$ if and only if its base-extension $i_Z \colon Z' \times_{\Spec(R)} Z \to Z$ is in $\C$.

  Finally, if a composite $h = f \circ g$ is in $\C$ -- i.e. $f$ is a ``refinement'' of $h \in \C$ -- then $f \in \C$.  A special case is the case of a morphism admitting a section (so that $h = \id$).  The general case reduces to this: to show that $f \in \C$, it is enough to show that its base change along $h$ is so.  But this base change has a section, induced from the diagonal of $f$.

  \bigskip

  {\noindent}{\bf Step 3: Case of $Z, X$ classical}
  Suppose that $\pi \colon Z \to X$ has both $Z$ and $X$ an ordinary classical scheme.  In this case, $R \to R'$ is a nilpotent surjection of ordinary rings.  Filtering by powers of the nilradical shows that it is a composite of (classical) square-zero extensions, which happen to also be derived square-zero extensions by \cite{HigherAlgebra}*{Section~8.4}.  Thus, $\pi \in \C$.  (Note that though such $\pi$ are not preserved by base change, the base change will still be a composite of square-zero extensions.)

  \bigskip

  {\noindent}{\bf Step 4: Reduction to $X$ classical.}
Suppose that $A \in \SCR_\kk$ and that we write $A = \ilim_n A_n$ as an inverse limit of almost perfect $A$-algebras $\{A_n\}$ satisfying the conditions of \Cref{lem:random-t-stuff}. Then we claim that nilcompleteness implies that
\[ \sF(A) \longrightarrow \ilim_n \sF(A_n) \]
is an equivalence.  Indeed, this follows by evaluating the inverse limit
\[ \ilim_{n,m} \sF(\tau_{\leq m} A_n) \]
in two ways -- first $n$ then $m$, and vice-versa -- and applying nilcompleteness in each case.

This hypothesis applies to the inverse systems $\{ \tau_{\leq n} R \}$ and any base change of it,  such as
  \[ R' \isom (\ilim_n \tau_{\leq n} R) \otimes_R R', \qquad R'^{\otimes_R \bullet} = (\ilim_n \tau_{\leq n} R) \otimes_R R'^{\otimes_R \bullet}  \]
  This reduces proving that $\pi \in \C$ to proving that each base change  $\pi_{\tau_{\leq n} R} \in \C$. 

  Furthermore, note that for each $n$ the map $\Spec(\pi_0(R)) \to \Spec(\tau_{\leq n} R)$ is a composite of square-zero extensions -- thus it is in $\C$.  Using locality, we reduce proving $\pi_{\tau_{\leq n} R} \in \C$ to proving $\pi_{\pi_0(R)} \in \C$.

  \bigskip
  
  {\noindent}{\bf Step 5: Completing the proof} Suppose now that $\pi \colon Z = \Spec(R') \to X = \Spec(R)$ is a nil-thickening with $X$ classical.  Let $i_0 \colon Z^{cl} \to Z$ be the inclusion of the classical part of $Z$.  By Step 3, the composite $\pi \circ i_0$ is in $\C$.  By Step 2 (``refinements'') this proves that $\pi \in \C$.
\end{proof}

\subsubsection{Closed descent for $\QC^{cn}$ and variants}
As an application of the above pattern, we have:
\begin{prop}\label{prop:closed-descent-QCcn} Suppose that $\pi \colon \Z \to \X$ is an almost finitely presented, surjective, closed immersion of derived (or spectral) $\kk$-stacks. Then the pullback functor induces an equivalence of $\infty$-categories
  \[ \QC(\X)^{cn} \longrightarrow \Tot\{\QC(\Cech(\pi))^{cn}\} = \Tot\left\{ \QC(\Z^{\times_{\X} \bullet+1})^{cn} \right\}. \]
The same holds with $\QC^{cn}(-)$ replaced by $\QC^{acn}(-)$, $\APerf(-)$ or $\Perf(-)$.
\end{prop}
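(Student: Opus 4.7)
The plan is to reduce the proposition to the affine case via smooth descent and then apply a $\mathrm{Cat}_\infty$-valued enhancement of Lemma~\ref{lem:descent-nil}. First, I would choose a smooth atlas $U = \Spec A \to \X$ by an affine derived scheme and set $V := U \times_\X \Z$. Since $\pi$ is a closed immersion, $V = \Spec A' \to \Spec A$ is a finitely presented, surjective, closed immersion of affines, and the Cech nerve of $V \to U$ smoothly covers $\Cech(\Z/\X) \to \X$. Because $\QC(-)^{cn}$, $\QC(-)^{acn}$, $\APerf(-)$, and $\Perf(-)$ all satisfy smooth (in fact fpqc) descent, the proposition for $\pi$ reduces to the case $V \to U$, i.e., to the case of a surjective finitely presented closed immersion $\Spec A' \to \Spec A$.

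For the affine case, I would upgrade Lemma~\ref{lem:descent-nil} from $\Sp$-valued to $\mathrm{Cat}_\infty$-valued presheaves. An augmented cosimplicial diagram in $\mathrm{Cat}_\infty$ is a limit diagram if and only if (i) the induced diagram of underlying $\infty$-groupoids of objects is a limit in $\Sp$ and (ii) for each compatible pair of objects, the induced diagram of mapping spaces is a limit in $\Sp$. Both are $\Sp$-valued, so Lemma~\ref{lem:descent-nil} applies once one verifies nilcompleteness and infinitesimal cohesiveness for the $\mathrm{Cat}_\infty$-valued functor $A \mapsto \QC(\Spec A)^{cn}$ at the level of objects and mapping spaces. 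Nilcompleteness is the equivalence $A\mod^{cn} \simeq \ilim_n (\tau_{\leq n} A)\mod^{cn}$, and infinitesimal cohesiveness is the statement that module categories satisfy descent along square-zero extensions; both are foundational facts in derived algebraic geometry (cf.\ \cite{HigherAlgebra}*{Section~16.2}). The subpresheaves $\QC^{acn}$, $\APerf$, $\Perf$ inherit these properties, as each of ``bounded above,'' ``almost perfect,'' and ``perfect'' is preserved by the relevant truncations and can be detected on each factor of a square-zero extension.

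The main obstacle is verifying that ``almost perfect'' and ``perfect'' descend along the infinitesimally cohesive limits. Concretely, given a compatible family of almost perfect (resp.\ perfect) objects on $\Spec A$, $\Spec B$, $\Spec B'$ which glues via $\QC^{cn}$ descent to some $M \in \QC(\Spec A')^{cn}$, one must show that $M$ is itself almost perfect (resp.\ perfect). For almost perfect this reduces to finite generation of each $\pi_i M$ over $\pi_0 A'$, controlled by the data on the three factors together with the connecting map; for perfect one additionally invokes that bounded Tor amplitude can be tested on residue fields, which are unaffected by nil-thickenings. With these checks in place, the proposition is a direct application of the $\mathrm{Cat}_\infty$-enhanced Lemma~\ref{lem:descent-nil} to the affine closed immersion $V \to U$.
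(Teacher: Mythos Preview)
Your approach is correct and follows the same overall strategy as the paper: reduce to the affine case by descent, then apply Lemma~\ref{lem:descent-nil} after verifying nilcompleteness and infinitesimal cohesiveness for the relevant presheaves of categories.

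Two simplifications are worth noting. First, there is no need to ``upgrade'' Lemma~\ref{lem:descent-nil}: its proof is entirely formal (split augmented cosimplicial diagrams, closure of the class $\C$ under composition, refinement, and locality), and never uses that $\sF$ is $\Sp$-valued. It therefore applies verbatim to $\mathrm{Cat}_\infty$-valued presheaves, so you can skip the detour through objects and mapping spaces. Second, your handling of $\Perf$ via Tor amplitude on residue fields works but is harder than necessary: the paper instead observes that once the statement is known for $\APerf$, the case of $\Perf$ follows immediately because $\Perf \subset \APerf$ is exactly the full subcategory of dualizable objects for the symmetric monoidal structure, and all the pullback functors are symmetric monoidal. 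For the infinitesimal cohesiveness of $\QC^{cn}$, $\APerf$, and $\Perf$, the precise reference is \cite{DAG-IX}*{Theorem~7.1, Prop.~7.7} rather than a section of \cite{HigherAlgebra}; the paper also spells out nilcompleteness for $\QC^{cn}$ directly from left $t$-completeness rather than citing it.
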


\begin{proof} Note that by commuting limits we may suppose $\X = \Spec(R)$ is affine.  Now we are in a position to apply \Cref{lem:descent-nil}.  Let us check the the hypotheses hold:
 
\bigskip

  Note that (i) holds for $\QC^{cn}$ because it is left $t$-complete: Let us write $R_n$ for $\tau_{\leq n} R$.  To see that the restriction map is fully faithful we must show that the map
  \[ \Map_R(M,N) \longrightarrow \ilim_n \Map_{R_n}(M \otimes_R R_n, N \otimes_R R_n) \]
  is an equivalence.  Using left $t$-completeness on both sides we may identify this with
  \[ \ilim_k \Map_R(\tau_{\leq k} M, \tau_{\leq k} N) \longrightarrow \ilim_{n,k} \Map_{R_n}(\tau_{\leq k}(M \otimes_R R_n), \tau_{\leq k}(N \otimes_R R_n)) \]
  Next note that the maps
  \[ \tau_{\leq k}(M) \longrightarrow \tau_{\leq k}(M \otimes_R R_n) \quad \text{ for $n \geq k$} \]
  are all equivalences -- from this it follows that the previous map is an equivalence. 

  To see that it is essentially surjective, suppose that
  \[ \{M_n\} \in \ilim_n \{ (\tau_{\leq n} R)\mod^{cn} \} \]
  then note that the pushforwards $\{M_n\} \in R\mod^{cn}$ form an inverse system satisfying the hypotheses of \Cref{lem:random-t-stuff}.  In particular, letting $M = \ilim M_n$ we see that $M$ is connective.

  \bigskip

  A similar argument shows that (i) holds for $\APerf^{cn}$.  Fully-faithfulness follows from the above, and to conclude it is enough to note that if the system $\{M_n\}$ consists of modules which are perfect to order $k$, then the inverse limit $M$ is also perfect to order $k$.  This is because $\tau_{\leq k} M = \tau_{\leq k} M_n$ for $n \geq k$ compatibly with the natural equivalences
  \[ (R\mod)^{cn}_{\leq k} \isom \left(R_n\mod\right)^{cn}_{\leq k} \quad \text{for $n \geq k$}. \]
  Thus if $\tau_{\leq k} M_n$ is compact in $(R_n\mod)^{cn}_{\leq k}$, then the same is true for $\tau_{\leq k} M$.

  \bigskip

  The case of (i) for each of $\QC^{acn}$ and $\APerf$ then follows by shifting the modules as needed until they are connective --  notice that if we shift $M_0$ to be connective, the rest will be as well.   The case of $\Perf$ follows from $\APerf$ upon noting that $\Perf \subset \APerf$ may be recognized as the dualizable objects.

  \bigskip
  
  Finally, notice that (ii) for each of our four categories follows form \cite{DAG-IX}*{Theorem~7.1, Prop.~7.7} (these results apply to simplicial commutative rings as well, because the forgetful functor $\SCR \to \CAlg^{cn}$ preserves fiber products).  Thus the hypotheses of of \Cref{lem:descent-nil} hold and our result is proved.
\end{proof}

\begin{cor} Suppose that $\pi \colon \Z \to \X$ is an almost finitely presented closed immersion of algebraic derived (or spectral) $\kk$-stacks. Let $\oh{\X}$ denote the formal completion of $\X$ along the image of $\Z$.  Then the pullback functor
  \[ \QC(\oh{\X})^{cn} \longrightarrow \Tot\{\QC(\Cech(\pi))^{cn}\} = \Tot\left\{ \QC(\Z^{\times_{\X} \bullet+1})^{cn} \right\} \]
  is an equivalence of $\infty$-categories.  The same holds with $\QC^{cn}(-)$ replaced by $\QC^{acn}(-)$, $\APerf(-)$, or $\Perf(-)$.
\end{cor}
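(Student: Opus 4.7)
The plan is to run the same argument by which \autoref{cor:cplt-Z} was deduced from \autoref{lem:descent-nil}, but now for $\infty$-category valued presheaves on algebraic stacks, with \autoref{prop:closed-descent-QCcn} playing the role of \autoref{lem:descent-nil}. First I would reduce to the case $\X = \Spec R$ affine: each of the four presheaves $\QC^{cn}$, $\QC^{acn}$, $\APerf$, $\Perf$ satisfies smooth (in fact flat) descent, and both the formal completion $\oh{\X}$ and the Cech nerve of $\pi$ commute with smooth base change on $\X$. Since $\pi$ is a closed immersion, $\Z$ is then automatically affine, say $\Z = \Spec R'$.

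In the affine case I would invoke \autoref{prop:cplt-affine} to realize $\Spf R \simeq \colim_n \Spec R_n$ as a colimit of affine schemes along a tower of perfect $R$-algebras. By the construction of $\QC$ on prestacks (reviewed in \autoref{section:qc}) this yields $\sF(\oh{\X}) \simeq \ilim_n \sF(R_n)$ for each of the four presheaves $\sF$ of interest. For each $n$, the base change $\pi_n \colon \Z \times_\X \Spec R_n \to \Spec R_n$ is a finitely presented closed immersion that is now surjective, because $\Spec R_n \to \Spec R$ factors set-theoretically through the support of $\Z$. Hence \autoref{prop:closed-descent-QCcn} applies and gives $\sF(R_n) \simeq \Tot\{\sF(\Cech(\pi_n))\}$.

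To finish, I would take the inverse limit over $n$ and exchange it with the totalization (both are limits in the $\infty$-category of $\infty$-categories):
\[ \sF(\oh{\X}) \simeq \ilim_n \Tot\{\sF(\Cech(\pi_n))\} \simeq \Tot\{\ilim_n \sF(\Cech(\pi_n))\}. \]
At each cosimplicial level $k$ the inner inverse limit computes $\sF$ on the fiber product $\Spec(R'^{\otimes_R k+1}) \times_{\Spec R} \Spf R$. Since $\Spec R' \to \Spec R$ factors through the monomorphism $\Spf R \hookrightarrow \Spec R$, so do all of its self fiber products, and this fiber product is simply $\Spec R'^{\otimes_R k+1} = \Z^{\times_\X k+1}$. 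This gives the desired equivalence. I do not anticipate a serious obstacle: given \autoref{prop:closed-descent-QCcn} and \autoref{prop:cplt-affine} the argument is purely formal, and the only place to be slightly careful is in verifying $\sF(\oh{\X}) \simeq \ilim_n \sF(R_n)$ for each of the four flavors of $\sF$, which holds because $\QC$ and its relevant subcategories on a prestack are defined via right Kan extension from affines.
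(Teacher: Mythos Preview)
Your proposal is correct and is precisely the argument the paper has in mind: the paper's one-line proof is ``Combine the argument of the previous Proposition with \autoref{cor:cplt-Z},'' and you have faithfully unpacked the proof of \autoref{cor:cplt-Z} with \autoref{prop:closed-descent-QCcn} standing in for \autoref{lem:descent-nil}, after the local-on-$\X$ reduction to the affine case.
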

\begin{proof} Combine the argument of the previous Proposition with \Cref{cor:cplt-Z}.
\end{proof}

\subsection{Descent pattern for the \texorpdfstring{$h$}{h}-topology}

By a derived $h$-cover we will mean a morphism $\pi \colon \X' \to \X$ which is representable by algebraic derived stacks almost of finite presentation, and which is a universal topological submersion, and we use the same definition for spectral stacks. Examples include fppf surjections (since they are universally open), proper surjections (since they are universally closed), and more generally formally proper surjections (which are also universally closed by \Cref{P:universally_closed}).

Our descent pattern is based on the idea that the $h$-topology is generated by surjective closed immersions, fppf covers, and ``abstract blowup squares.''  In the derived context, closed immersions are no longer monomorphisms -- this necessitates modifying the abstract blowup square condition by taking suitable formal completions. We say that a cartesian diagram of prestacks
  \begin{equation}\label{eqn:abstract_blowup_square}
  \xymatrix{
  \oh{\Y} \ar[d]_{\oh{\pi}} \ar[r]^{i'} & \Y \ar[d]^{\pi} \\
  \oh{\X} \ar[r]_-i & \X = \Spec(R)
  }
  \end{equation}
is an \emph{abstract blowup square} (with affine base) if $\X = \Spec(R)$ for a noetherian $R \in \SCR_\kk$, $\pi$ is a proper derived algebraic space, $\oh{\X}$ is the completion of $\X$ along a closed subset $|\Z| \subset |\X|$, and  $\pi^{-1}(\X \setminus \Z) \to \X \setminus \Z$ is an isomorphism.

\begin{prop}\label{prop:h-descent-blowup} Suppose that $\sF$ is a presheaf on locally noetherian algebraic derived (or spectral) $\kk$-stacks satisfying:
  \begin{enumerate}[label=(\roman*)]
      \item $\sF$ has descent for surjective almost finitely presented closed immersions;
      \item $\sF$ has fppf descent;
      \item For any abstract blowup square \eqref{eqn:abstract_blowup_square} with affine base, the natural map
\[ \sF(\X) \longrightarrow \sF(\oh{\X}) \times_{\sF(\oh{\Y})} \sF(\Y) \]
is an equivalence.
  \end{enumerate}

  Then $\sF$ has descent for $h$-covers.
\end{prop}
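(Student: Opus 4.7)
The plan is to exploit the fact that an $h$-cover can be refined by a combination of fppf covers, surjective closed immersions, and blowups, and then invoke (i), (ii), (iii) to conclude. I would first reduce to showing that $\sF(\X) \to \Tot \sF(\Cech(\pi))$ is an equivalence for a single $h$-cover $\pi \colon \X' \to \X$. By (ii), the question is fppf-local on $\X$, so we may assume $\X = \Spec R$ is affine Noetherian. Moreover, an $h$-cover of a Noetherian affine scheme can always be refined by an fppf cover followed by a proper surjection, so (ii) reduces us further to the case where $\pi$ is a proper surjection of algebraic stacks.

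The key geometric input is Raynaud--Gruson's platification par éclatement: given such a proper surjection $\pi$ onto $\X = \Spec R$, there exist a nowhere dense closed subscheme $\Z \subset \X$ and a blowup $b \colon \tilde{\X} \to \X$ with center in $\Z$ such that the strict transform $\tilde{\pi} \colon \tilde{\X}' \to \tilde{\X}$ becomes flat, hence fppf (being proper and surjective as well). I would then argue by Noetherian induction on $\X$, assuming the result is already known for proper surjections onto any proper closed subscheme of $\X$.

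Applying (iii) to the abstract blowup square associated to $b$ and $\Z$ (and, compatibly, to each level of the Cech nerve of $\pi$), one obtains
\[
\sF(\X) \simeq \sF(\tilde{\X}) \times_{\sF(\oh{\Y})} \sF(\oh{\X}),
\]
where $\oh{\X}$ is the completion of $\X$ along $\Z$ and $\oh{\Y} := \oh{\X} \times_\X \tilde{\X}$. It then suffices to verify descent for the pullback of $\pi$ to each of these three pieces. On $\tilde{\X}$, the refined cover $\tilde{\pi}$ is fppf, so we win by (ii); on $\oh{\X}$, \autoref{cor:cplt-Z} combined with (i) reduces us to descent for the pullback of $\pi$ to $\Z$ and its higher self-products over $\X$, each of which is a proper surjection onto a Noetherian scheme of strictly smaller dimension than $\X$, to which the Noetherian induction hypothesis applies. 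The argument for $\oh{\Y}$ is analogous, once we observe that $\tilde{\pi}$ restricted over $\oh{\Y}$ is an isomorphism on the complement of the preimage of $\Z$.

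The main obstacle I anticipate is the Raynaud--Gruson flatification step in sufficient generality for algebraic stacks rather than just schemes, together with checking that the strict transform remains a proper surjection so that the refinement genuinely becomes fppf. A secondary but nontrivial bookkeeping issue is verifying that the abstract blowup square decomposition in (iii) commutes with taking Cech nerves in the required sense, so that the descent conclusions on $\tilde{\X}$, $\oh{\X}$, and $\oh{\Y}$ assemble back into descent for $\pi$ on $\X$.
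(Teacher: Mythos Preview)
Your proposal uses the same ingredients as the paper's proof (fppf descent, descent for surjective closed immersions, Noetherian induction, platification par \'eclatement, and the abstract blowup axiom), but the organization differs in one structural respect, and this difference is precisely the ``bookkeeping issue'' you flag at the end.

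You keep the general proper surjection $\pi$ and try to apply (iii) to the blowup $b\colon \tilde{\X}\to\X$ \emph{compatibly with the Cech nerve of $\pi$}. The difficulty is that (iii) is stated only for abstract blowup squares with \emph{affine} base, and the levels $\X'^{\times_\X n+1}$ of the Cech nerve of $\pi$ are not affine; nor are the maps $\X'^{\times_\X n+1}\to\X$ isomorphisms over $U=\X\setminus Z$, so one cannot feed them into (iii) with $\X$ as the base either. This can be repaired by globalizing (iii) from affine bases to algebraic spaces using (ii), but that is an extra step you do not mention. The paper sidesteps the whole issue by first using the refinement property of the class $\C$ to replace $\pi$ by the blowup morphism $p_Z\colon\Bl_Z\X\to\X$ itself (since $\pi$ is refined by the composite of $p_Z$ with the flat strict transform). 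For the blowup, every level $\Y^{\times_\X n+1}\to\X$ of its Cech nerve \emph{is} proper over the affine $\X$ and an isomorphism over $U$, so (iii) applies directly at each level; commuting the resulting pullback squares with totalization and using $\pi_{\Spf R}\in\C$ finishes. This is both what the paper does and what makes your anticipated bookkeeping evaporate.

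Two smaller points: your Noetherian induction should be on proper closed subschemes of $\X$, not on dimension (the center $Z$ need only be nowhere dense, not of smaller dimension). And your worry about Raynaud--Gruson for stacks is unnecessary: since $h$-covers are by definition representable by algebraic spaces and the base has been reduced to an affine scheme, ordinary platification for algebraic spaces suffices.
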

\begin{proof} 

Let $\D$ denote the class of all almost finitely presented surjections $\pi \colon \X' \to \X$ with locally noetherian target.  Let $\C\subset \D$ consist of those $\pi$ satisfying, additionally, the condition that for every almost finitely presented affine map $\Y \to \X$, $\sF$ has descent for the base change $\pi_\Y$, i.e. restriction gives an equivalence $\sF(\Y) \to \Tot\{\sF(\Cech(\pi_\Y))\}$. We wish to show that $\C$ contains all $h$-covers.

Note that if $\pi \in \C$, then $\sF$ has descent for $\pi_\Y$ for \emph{any} almost finitely presented morphism $\Y \to \X$: It suffices to consider $\X$ quasi-compact, i.e. noetherian. We can choose an \'etale surjection $p : \sV \to \Y$ with $\sV$ affine, and by \'etale descent and commuting limits it suffices to show that $\pi_{\sV_n} \in \C$ for each level $\sV_n = \sV^{\times_\Y n}$ in the Cech nerve of $p$. Thus we can reduce to the case where $\Y \to \X$ is representable by derived algebraic spaces. Repeating this trick three more times, we reduce to the case where $\Y \to \X$ is representable by derived schemes, then by separated derived schemes, and finally to where $\Y \to \X$ is affine.

The proof of \Cref{lem:descent-nil}, Step 2, applies verbatim to show that $\C$ is closed under composition and refinements, and to show locality along maps in $\C$.

\bigskip

{\noindent}{\bf Step 0: Reduction to $\X$ an affine scheme.}\\
Since $\sF$ is an fppf sheaf, the assertion is fppf local on $\X$.  Thus we may suppose $\X$ is affine.

{\noindent}{\bf Step 1: Reduction to $\X$ classical and reduced, $\X'$ classical.}\\
Since $\Spec(\pi_0(R)_{red}) \to \Spec(R)$ is a surjective closed immersion, it is in $\C$.  So by ``locality'' to show that $\pi \in \C$ it is enough to show that the base change $\pi_{\pi_0(R)_{red}} \in \C$.  That is, we may suppose that $\X$ is classical and reduced.  Similarly, by ``refinements'' we may suppose that $\X'$ is classical.

\bigskip

{\noindent}{\bf Step 2: A general descent argument for (classical) $h$-covers}
We will prove, by noetherian induction, that for every closed subscheme $Z \subset \X$ it is the case that all $h$-coverings having target $Z$ belong to $\C$.  Thus we may suppose that this holds for all (classical) proper closed subschemes of $\X$, and we must show that every $h$-cover $\pi \colon \X' \to \X$ belongs to $\C$. We can choose a smooth cover $\Spec(A) \to \X'$, and the the composition $\Spec(A) \to \X$ is again an $h$-cover, so it suffices by refinement to consider the case where $\pi$ is representable by derived algebraic spaces.


Since $\X$ is reduced, we may apply ``generic flatness'' to $\pi$ to deduce the existence of a dense open $U \subset \X$ such that $\pi_U$ is flat. By ``platification par eclatement,'' there exists a closed subscheme $Z \subset \X$ disjoint from $U$ such that the strict transform $\sq{p_Z}(\X')$ is flat over $\Bl_Z(\Y')$.  More precisely, there is a commutative diagram
\[ \xymatrix{
\ar[dr]_{\pi'} \Y' = \sq{p_Z}(\X') \ar@{^{(}->}[r] & \Bl_Z(\X) \times_{\X} \X^\prime \ar[d] \ar[r] & \X' \ar[d]^{\pi} \\ 
& \Y = \Bl_Z(\X) \ar[r]_-{p_Z} & \X 
} \]
such that $\pi' \colon \Y' \to \Y$ is flat.  In order to show that $\pi \in \C$ it is enough, by refinement, to show that $p_Z \circ \pi' \in \C$.  Note that $\pi'$ is surjective by the argument of \cite{Voevodsky-Homology}*{Prop.~3.1.3}, so that it is a flat cover and thus in $\C$.  It thus suffices to show that the blowup morphism $p_Z \in \C$.

We now simplify our notation by renaming $p_Z$ to just $\pi \colon \X' = \Bl_Z(\X) \longrightarrow \X$. We consider an almost finitely presented map $\Y = \Spec(R') \to \X$, and we wish to show that $\sF$ has descent for the base change $\pi_{\Y} : \Y' \to \Y$ of $\pi$.

$\pi_{Z} \in \C$ by our inductive hypothesis, so by Step 1, $\pi_{R/I^n} \in \C$ for all $n \geq 1$, where $I \subset R$ is the ideal defining $Z$. We may summarize this with the slight abuse of notation $\pi_{\Spf(R)} \in \C$. It follows that if $\oh{\Y}$ is the completion of $\Y$ along the preimage of $Z$, then $\sF(\oh{\Y}) \to \Tot\{\sF(\Cech(\pi_{\oh{\Y}}))\}$ is an equivalence. Now consider the cartesian diagram of simplicial derived schemes
\[
\xymatrix{ \Cech(\pi_{\oh{\Y}}) \ar[r] \ar[d] & \Cech(\pi_\Y) \ar[d] \\ \{\oh{\Y}\} \ar[r] & \{\Y\} },
\]
where the bottom row consists of constant simplicial schemes. This diagram is level-wise an abstract blow up square \eqref{eqn:abstract_blowup_square} with affine base, so applying our hypothesis (iii) and commuting limits, we see that
\[
\sF(\Y) \to \Tot\{\sF(\Cech(\pi_\Y))\} \times_{\Tot\{\sF(\Cech(\pi_{\oh{\Y}}))\}} \sF(\oh{\Y})
\]
is an equivalence, and hence $\sF(\Y) \to \Tot\{\sF(\Cech(\pi_\Y))\}$ is an equivalence.

\end{proof}

\subsection{\texorpdfstring{$h$}{h}-descent theorems for \texorpdfstring{$\APerf$}{APerf}, \texorpdfstring{$\Perf$}{Perf}, and quasi-geometric stacks}

By a locally noetherian stack in the following theorem, we mean a stack which is a left Kan extension along the morphism $\SCR^{noeth}_{\kk} \hookrightarrow \SCR_{\kk}$. Examples include locally noetherian algebraic derived $\kk$-stacks, as well as formal completions of such stacks along closed substacks.

\begin{thm}\label{thm:descent-aperf} 
Suppose that $\X$ is a locally noetherian derived (or spectral) $\kk$-stack, and that $\pi \colon \X' \to \X$ is an $h$-covering. Then if $\{\X'_\bullet\} := \Cech(\pi)$, the pullback functors determine equivalences of $\infty$-categories
  \[ (\pi_\bullet)^* \colon \APerf(\X) \to \Tot\{ \APerf(\X'_\bullet) \}.\]
The same is true with $\APerf(-)$ replaced by $\Perf(-)$ or $\APerf(-)^{cn}$.
\end{thm}

We will establish this after some preliminary lemmas.

\begin{lem} \label{lem:formal_equivalence}
Let $p : \X' \to \X$ be a relatively algebraic qc.qs. map satisfying \CD between derived (or spectral) stacks, and let $Z \subset |\X|$ be a cocompact closed subset such that $\oh{\X'}_{p^{-1}(Z)} \to \oh{\X}_Z$ is an isomorphism. Then $p^\ast : \QC_Z(\X) \to \QC_{p^{-1}(Z)}(\X')$ is a equivalence whose inverse is given by $p_\ast$.
\end{lem}
\begin{proof}
This follows from \Cref{thm:qc-spf} in the derived setting, and from \cite{DAG-XII}*{Theorem~5.1.9} in the spectral setting for categories of almost connective modules. For the stronger statement in the spectral setting, note that the claim is local on $\X$, so we may assume $\X = \Spec(R)$. The claim is equivalent to the canonical maps $M \to p_\ast(p^\ast(M))$ and $p^\ast (p_\ast(F)) \to F$ being equivalences for all $M \in \QC_Z(\X)$ and $F \in \QC_{p^{-1}(Z)}(\X')$. This follows from the case of almost connective objects, because the $t$ structures on these categories are right $t$-complete and $p_\ast$ and $p^\ast$ commute with filtered colimits by \Cref{prop:push-CD}.
\end{proof}

\begin{lem}[Excision for $\QC$] \label{lem:qc-pushout}
Consider a cartesian diagram of derived (or spectral) stacks
\[
\xymatrix{\Y' \ar[r]^{p'} \ar[d]^{q'} & \Y \ar[d]^q \\ 
\X' \ar[r]^p & \X}
\]
such that both $p$ and $q$ are relatively algebraic, qc.qs. and satisfy \CD. Assume that there is a cocompact closed subset $Z \subset |\X|$ with complement $\sU = \X \setminus Z$ such that $\oh{\X'}_{p^{-1}(Z)} \to \oh{\X}_Z$ and $\Y \times_\X \sU \to \sU$ are isomorphisms. Then restriction induces an equivalence of categories
\[ \QC(\X) \to \QC(\X') \times_{\QC(\Y')} \QC(\Y). \]
If $p$ is flat at every point in $p^{-1}(Z) \subset |\X'|$, then the same holds with $\QC(-)$ replaced by $\QC(-)^{cn}$, $\QC(-)^{acn}$, $\APerf(-)$, or $\Perf(-)$.
\end{lem}

\begin{proof}
This assertion is local on $\X$, so we are free to assume that $\X = \Spec(R)$ is affine, and hence the other stacks are qc.qs. algebraic.

Let $g = p \circ q' \simeq q \circ p'$. Note that the pullback $\QC(R) \to \QC(\X') \times_{\QC(\Y')} \QC(\Y)$ has a right adjoint
\[ (F_{\X'}, F_{\Y'}, F_{\Y}) \mapsto p_* F_{\X'} \times_{g_* F_{\Y'}} q_* F_{\Y} .\]
For fully faithfulness we must show that the unit of this adjunction is an isomorphism. By the base change formula (\Cref{prop:push-CD}),
\[ g_\ast(F_{\Y'}) \simeq p_\ast(q'_\ast((p')^\ast(F_\Y))) \simeq p_\ast(p^\ast(q_\ast(F_\Y))) \]
So for fully faithfulness it suffices to show that $\forall M \in \QC(R)$,
\[ \xymatrix{
M \ar[d] \ar[r] & p_\ast(p^\ast(M)) \ar[d] \\
q_* q^* M \ar[r] &  p_\ast(p^\ast(q_\ast( q^\ast(M)))) }\]
is a pullback diagram. This follows from the fact that $\cofib(M \to q_\ast q^\ast(M)) \in \QC_Z(\X)$, and hence the unit of adjunction $(-) \to p_\ast(p^\ast(-))$ is an isomorphism when applied to this object, by \Cref{lem:formal_equivalence}.

For essential surjectivity, suppose given $F_\Y$, $F_{\Y'}$, and $F_{\X'}$, with an equivalence $(q')^* F_{\X'} \simeq F_{\Y'} \simeq (p')^*(F_{\Y})$. We can tensor this triple with the fiber sequence $\RGamma_Z(R) \to R \to j_\ast(\cO_\sU)$, where $j : \sU \hookrightarrow \Spec(R)$ is the open immersion. Making use of the projection formula, this shows that $(F_\Y,F_{\Y'},F_{R'})$ is an extension of the two triples:
\[ \left( F_{\X'} \otimes p^\ast(j_\ast(\cO_\sU)), F_{\Y'} \otimes g^\ast(j_\ast(\cO_\sU)), F_{\Y} \otimes q^\ast(j_\ast(\cO_\sU)) \right), \text{ and} \]
\[ \left( F_{\X'} \otimes p^\ast(\RGamma_Z(R)), F_{\Y'} \otimes g^\ast(\RGamma_Z(R)), F_{\Y} \otimes q^\ast(\RGamma_Z(R)) \right).\]
We have already shown fully faithfulness, so it suffices to show that these two triples lie in the essential image of the restriction functor.

For the first triple, using the base change and projection formulas, it suffices to show that restriction induces an equivalence
\[
\QC(\sU) \to \QC(p^{-1}(\sU)) \times_{\QC(g^{-1}(\sU))} \QC(q^{-1}(\sU)).
\]
This follows from the hypothesis that $g^{-1}(\sU) \to p^{-1}(\sU)$ and $q^{-1}(\sU) \to \sU$ are isomorphisms. For the second triple, it suffices to show that restriction induces an equivalence
\[
\QC_Z(R) \to \QC_{p^{-1}(Z)}(\X') \times_{\QC_{g^{-1}(Z)}(\Y')} \QC_{q^{-1}(Z)}(\Y).
\]
This follows from \Cref{lem:formal_equivalence}, which implies that $\QC_{q^{-1}(Z)}(\Y) \to \QC_{g^{-1}(Z)}(\Y')$ and $\QC_Z(R) \to \QC_{p^{-1}(Z)}(\X')$ are equivalences.

For the variants of the lemma, choose a smooth surjection $\Spec(R') \to \X'$, and let $S \subset \pi_0(R')$ be the multiplicative system consisting of all elements which do not vanish along the preimage of $Z$. Then the map $\sU \sqcup \Spec(S^{-1} R') \to \Spec(R)$ is an fpqc cover, so an $R$-module is connective / almost perfect / perfect if and only if its restrictions $p^\ast(M)$ and $q^\ast(M)$ are. This allows us to deduce the claim for $\QC(-)^{cn}$, $\APerf(-)$, and $\Perf(-)$ from the claim for $\QC(-)$.
\end{proof}

\begin{lem} \label{lem:aperf-blowup} Given an abstract blowup square \eqref{eqn:abstract_blowup_square}, the natural restriction map
\[ \APerf(\X) \longrightarrow \APerf(\oh{\X}) \times_{\APerf(\oh{\Y})} \APerf(\Y) \]
is an equivalence. This is also true with $\APerf(-)$ replaced by $\Perf(-)$ or $\APerf(-)^{cn}$.
\end{lem}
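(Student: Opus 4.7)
The plan is to reduce to two separate formal gluing statements, one on $\X$ and one on $\Y$, whose compatibility is encoded in the isomorphism $\pi$ provides over the open complement. Write $U := \X \setminus \Z$, $V := \pi^{-1}(U) \subset \Y$ (so that $\pi|_V : V \isom U$ by hypothesis), $\oh{U} := U \times_\X \oh{\X}$, and $\oh{V} := V \times_\Y \oh{\Y}$; the latter two are canonically identified via $\pi$. The key inputs I would establish first are the ``formal gluing'' equivalences
\begin{align*}
\Phi_\X &\colon \APerf(\X) \isom \APerf(\oh{\X}) \times_{\APerf(\oh{U})} \APerf(U), \\
\Phi_\Y &\colon \APerf(\Y) \isom \APerf(\oh{\Y}) \times_{\APerf(\oh{V})} \APerf(V),
\end{align*}
and their $\Perf$-analogues.

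Granting these, the lemma follows by a purely formal manipulation of $\infty$-categorical fiber products: substituting $\Phi_\Y$ into the right-hand side of the lemma and cancelling the resulting $\APerf(\oh{\Y})$-factor yields
\[ \APerf(\oh{\X}) \times_{\APerf(\oh{\Y})} \APerf(\Y) \isom \APerf(\oh{\X}) \times_{\APerf(\oh{U})} \APerf(U) \isom \APerf(\X), \]
where the first step uses the identifications $V \isom U$ and $\oh{V} \isom \oh{U}$, and the second is $\Phi_\X$.

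The main obstacle is establishing the formal gluings themselves. Full faithfulness can be handled uniformly by Mayer--Vietoris: the local-cohomology triangle $\RGamma_{\Z}(F) \to F \to Rj_*(F|_U)$, together with the description of $\RGamma_{\Z}$ in terms of the formal completion developed earlier in the paper, produces a fiber square
\[
\xymatrix{
\RHom_\X(F,G) \ar[r]\ar[d] & \RHom_{\oh{\X}}(F|_{\oh{\X}},G|_{\oh{\X}}) \ar[d] \\
\RHom_U(F|_U,G|_U) \ar[r] & \RHom_{\oh{U}}(F|_{\oh{U}},G|_{\oh{U}})
}
\]
of mapping spectra, and analogously for $\Y$. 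Essential surjectivity is the delicate step. Given descent data $(G, H, \gamma)$, the natural candidate is the pullback $F := Rj_* G \times Ri_* H$ formed in $\QC$; one must verify $F \in \APerf$. For $\X = \Spec R$ this is a derived Beauville--Laszlo statement, which I would prove by restricting along the thickenings $\X_n := \Spec R/I^n$ and assembling via the formal descent of \autoref{cor:cplt-Z}; for $\Y$ the gluing is transferred from $\X$ by fppf descent along a smooth atlas of $\Y$ combined with proper base change. The $\Perf$ version then follows automatically, since perfect complexes are the dualizable objects of $\APerf$ and dualizability is preserved and detected by the restriction functors appearing in the fiber square.
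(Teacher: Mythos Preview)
Your reduction has a definitional problem that makes the gluing statements $\Phi_\X$ and $\Phi_\Y$ false as written. In the paper's abstract blowup square, $\oh{\X}$ is the \emph{functor-of-points} completion $\Spf R$ (maps to $\X$ that factor set-theoretically through $Z$), and $\oh{\Y} = \Y \times_\X \oh{\X}$. Consequently your $\oh{U} := U \times_\X \oh{\X}$ and $\oh{V} := V \times_\Y \oh{\Y}$ are the \emph{empty} functors: a map into the open complement $U$ cannot set-theoretically factor through $Z$. With $\APerf(\emptyset) = \{0\}$, your $\Phi_\X$ reads $\APerf(\X) \isom \APerf(\oh{\X}) \times \APerf(U)$, which is certainly wrong. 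Your Mayer--Vietoris square for $\RHom$'s degenerates in the same way.

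The natural repair is to replace $\oh{\X}$ by $\Spec \oh{R}$ and $\oh{\Y}$ by $\Y_{\oh{R}} := \Y \times_{\Spec R} \Spec \oh{R}$, so that $\oh{U}$ becomes the punctured tubular neighborhood $U \times_R \oh{R}$; then $\Phi_\X$ and $\Phi_\Y$ are genuine Beauville--Laszlo statements. But to connect this back to the lemma you must identify $\APerf(\oh{\Y})$ (formal completion) with $\APerf(\Y_{\oh{R}})$, and that identification is precisely Grothendieck existence for the proper algebraic space $\pi$ over the complete base $\oh{R}$. You never invoke this, yet it is the substantive input. (On the $\X$-side the analogous identification $\APerf(\Spf R) \simeq \APerf(\oh{R})$ is the easy \autoref{lem:artin-rees-cat}.)

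By contrast, the paper's proof is more direct and makes the role of \GE for $\pi$ explicit. Full faithfulness is the observation that for $M \in \APerf(R)$ the square with corners $M$, $\oh{M}$, $\pi_*\pi^*M$, $\oh{\pi_*\pi^*M}$ is Cartesian, which follows from the Theorem on Formal Functions together with the fact that the cone of $M \to \pi_*\pi^* M$ is almost perfect and supported on $Z$, hence already $I$-complete. For essential surjectivity one sets $M := i_* M_{\oh{\X}} \times_{i_* \oh{\pi}_* M_{\oh{\Y}}} \pi_* M_\Y$ in $R\mod$ and checks directly that $\pi^* M \isom M_\Y$, $i^* M \isom M_{\oh{\X}}$, and $M \in \APerf(R)$; the last point uses \GE for $\pi$ to algebrize $M_{\oh{\X}}$ and thereby show that a certain cone is almost perfect. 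So even after you patch the definitions, your route still needs the same \GE input, but packaged inside an extra layer (the formal gluing $\Phi_\Y$) whose proof you only sketch.
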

\begin{proof}
Note that this assertion is flat local on $\X$, so we are free to assume that $\X = \Spec(R)$ is affine. Let $\oh{R}$ be the completion of $R$ along the ideal defining $Z \hookrightarrow \X$. Restriction induces equivalences $\APerf(\Y') \simeq \APerf(\oh{\Y})$, by the Grothendieck existence theorem, and $\APerf(\oh{R}) \to \APerf(\Spf(R))$. It therefore suffices to show that restriction induces an equivalence $\APerf(R) \to \APerf(\oh{R}) \times_{\APerf(\Y')} \APerf(\Y)$, which follows from \Cref{lem:qc-pushout}.
\end{proof}

\begin{proof}[Proof of \Cref{thm:descent-aperf}]
The claim follows from applying \Cref{prop:h-descent-blowup} to the functors $\APerf(-)$, $\Perf(-)$, and $\APerf(-)^{cn}$. Condition (i) of \Cref{prop:h-descent-blowup} follows from \Cref{prop:closed-descent-QCcn}, condition (ii) is fppf descent, which these functors have, and condition (iii) follows from \Cref{lem:aperf-blowup}.

\end{proof}

\begin{cor}\label{cor:completion} Suppose that $\X$ is a locally noetherian algebraic derived (or spectral) stack, and that $\pi \colon \X' \to \X$ is an almost finitely presented, universally closed morphism. Let $\oh{\X}$ denote the completion of $\X$ along the (closed) image of $\pi$.  Then the pullback functors determine equivalences
  \[ \APerf(\oh{\X}) \stackrel\sim\longrightarrow \Tot\{\APerf(\Cech(\pi))\} = \Tot\left\{ \APerf(\X'^{\times_\X \bullet+1}) \right\} \]
  \[ \Perf(\oh{\X}) \stackrel\sim\longrightarrow \Tot\{\Perf(\Cech(\pi))\} = \Tot\left\{ \Perf(\X'^{\times_\X \bullet+1}) \right\} \]
\end{cor}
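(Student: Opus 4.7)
The plan is to deduce this from \autoref{thm:descent-aperf} by approximating $\oh{\X}$ by a tower of affine thickenings to which the main descent theorem directly applies.

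First, by smooth descent for $\APerf$ and $\Perf$, the claim is local on $\X$, so we may reduce to the case $\X = \Spec R$ for $R$ Noetherian. Let $Z \subset |\X|$ be the (closed) image of $\pi$, cut out by an ideal $I \subset \pi_0 R$; then $\oh{\X}$ is the $I$-adic formal completion. By \autoref{prop:cplt-affine} there is a tower of perfect $R$-algebras $\cdots \to R_1 \to R_0$ with $\oh{\X} = \colim_n \Spec R_n$ as pre-stacks, so that
\[ \APerf(\oh{\X}) = \ilim_n \APerf(\Spec R_n), \qquad \Perf(\oh{\X}) = \ilim_n \Perf(\Spec R_n).\]

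Next, for each $n$ set $\X'_n := \X' \times_\X \Spec R_n$. The map $\pi_n \colon \X'_n \to \Spec R_n$ is proper by base change, and it is surjective because $|\Spec R_n|$ is set-theoretically contained in $Z = \pi(|\X'|)$. Proper surjections are universal topological submersions and hence $h$-covers, so \autoref{thm:descent-aperf}(2) applied at each level $n$ gives
\[ \APerf(\Spec R_n) \stackrel{\sim}{\longrightarrow} \Tot \bigl\{ \APerf\bigl( (\X'_n)^{\times_{\Spec R_n} \bullet+1} \bigr) \bigr\}\]
and similarly for $\Perf$.

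Finally, base change identifies $(\X'_n)^{\times_{\Spec R_n} k+1}$ with $\X'^{\times_\X k+1} \times_\X \Spec R_n$. Since $\pi$ factors through $\oh{\X}$, so do all of its Cech-products, and hence $\APerf(\X'^{\times_\X k+1}) = \ilim_n \APerf(\X'^{\times_\X k+1} \times_\X \Spec R_n)$; likewise for $\Perf$. Taking $\ilim_n$ of the equivalences above and commuting $\ilim_n$ past $\Tot$ (both being limits of cosimplicial diagrams in $\infty$-categories) yields the desired equivalence for $\APerf(\oh{\X})$, and a verbatim argument handles $\Perf(\oh{\X})$. The only substantive point---rather than a formal manipulation of limits---is the set-theoretic surjectivity of each $\pi_n$, which is precisely where the hypothesis that we complete along the image of $\pi$ is used; I don't anticipate any other obstacle, since everything else is absorbed into the main theorem \autoref{thm:descent-aperf} and the presentation of $\oh{\X}$ from \autoref{prop:cplt-affine}.
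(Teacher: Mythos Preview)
Your proof is correct and follows essentially the same approach as the paper's. The paper's proof is a single sentence---``Combine the above with the argument given for \autoref{cor:cplt-Z}''---and what you have written is exactly a spelling-out of that argument: localize to affine $\X$, present $\oh{\X}$ as $\colim_n \Spec R_n$ via \autoref{prop:cplt-affine}, apply \autoref{thm:descent-aperf} level-wise to the surjective proper base changes $\pi_n$, and then commute $\ilim_n$ with $\Tot$, using that the Cech products of $\pi$ already factor through the monomorphism $\oh{\X}\hookrightarrow\X$.
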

\begin{proof} Combine \Cref{thm:descent-aperf} with the argument given for \Cref{cor:cplt-Z}.
\end{proof}

\subsubsection{Consequences for quasi-geometric stacks}

\begin{prop}\label{prop:stack-h}  If $\sF$ is a locally noetherian algebraic spectral stack with quasi-affine diagonal, then $\sF$ has descent for spectral $h$-covers.
\end{prop}
\begin{proof}
It suffices, by Zariski descent, to prove this when $\sF$ is quasi-compact, i.e. noetherian. Given an $h$-cover $\pi \colon \X' \to \X$, let $\X'_\bullet = \Cech(\pi)$. By \Cref{thm:descent-aperf} restriction gives an equivalence $\APerf(\X)^{cn} \simeq \Tot \{ \APerf(\X'_\bullet)^{cn} \}$. Note that $\Tot$ is a limit, and colimits in the totalization are formed level-wise, so we have, for any symmetric monoidal $\infty$-category $\cC$ which admits finite colimits
\[
\Fun_\otimes^c(\cC,\APerf(\X)^{cn}) \simeq \Tot \{ \Fun_\otimes^c(\cC,\APerf(\X'_\bullet)^{cn})\}.
\]
The result follows from the case where $\cC = \APerf(\sF)^{cn}$, by the Tannaka duality theorem \Cref{thm:Tannaka}.

\end{proof}

\begin{cor} \label{cor:stack-pushout}
In the $\infty$-category of noetherian algebraic spectral stacks with quasi-affine diagonal, any cartesian square which satisfies the conditions of \Cref{lem:qc-pushout}, and in which $p$ is flat along $p^{-1}(Z)$, is a also a pushout square.
\end{cor}
\begin{proof}
Combine \Cref{lem:qc-pushout} with Tannaka duality (\Cref{thm:Tannaka}), as in the proof of \Cref{prop:stack-h}.
\end{proof}

\begin{rem}
We expect \Cref{cor:stack-pushout} and \Cref{prop:stack-h} also hold for derived stacks, but one would need to use Tannaka duality in a less direct way (see, for instance, the proof of \Cref{prop:integrability}).
\end{rem}


\section{Formally proper morphisms}
\label{section:CA-CP}

In this section we establish many examples of formally proper morphisms. \Cref{thm:coh_projective_is_proper} states that ``cohomologically projective'' morphisms are formally proper, and we give two examples of large classes of cohomologically projective morphisms in \Cref{prop:projective_over_affine_quotient} and \Cref{prop:good-moduli-projective}. We use also develop a method of establishing formal properness using proper coverings in \Cref{thm:descent-apGE-new}.

\subsection{Cohomologically ample, (CA), systems}

We introduce a structure for a morphism of stacks which generalizes the notion of a relatively ample bundle for a proper morphism of schemes (\Cref{ex:ample_bundle}).

We will say that a set $I$ is \emph{preordered by the nonnegative integers} if we have an assignment of an integer $\val{\alpha} \geq 0$ to each $\alpha \in I$. In this case we say $\alpha \geq \beta$ (respectively $\alpha > \beta$) if $\val{\alpha} \geq \val{\beta}$ (respectively $\val{\alpha} > \val{\beta}$). For $E,F \in \QC(\X)_{\geq 0}$ we say that a map $E \to F$ is surjective if it is $0$-connective, or equivalently $H_0(E) \to H_0(F)$ is surjective in $\QC(\X)^\heart$.


\begin{defn} \label{defn:CA}
Let $\X$ be a noetherian algebraic derived (or spectral) stack, and let $\{V_\alpha\}_{\alpha \in I}$ be a collection of locally free sheaves on $\X$ indexed by a set preordered by the nonnegative integers. We say that $\{V_\alpha\}_{\alpha \in I}$ is a \emph{cohomologically ample (CA) system} if for all $F \in \Coh(\X)$,
\begin{enumerate} [label=(CA\arabic{enumi})] \label{property:CA}
\item $\forall N \geq 0$, $\Hom(V_\alpha,F) \neq 0$ from some $V_\alpha$ with $\val{\alpha} \geq N$, and
\item $\forall i < 0$, $\exists N \geq 0$ such that $H_i \RHom(V_\alpha, F) = 0$ whenever $\val{\alpha} \geq N$.
\end{enumerate}
If $\pi : \X \to \S$ is a qc.qs. morphism of locally noetherian algebraic derived stacks, we say that $\{V_\alpha\}_{\alpha \in I}$ is a \emph{cohomologically ample system relative to $\pi$} if for any affine derived scheme $T$ which is flat and locally almost finitely presented over $\S$, the system $\{V_\alpha |_{\X \times_\S T} \}_{\alpha \in I}$ satisfies (CA1) and (CA2).
\end{defn}

We will often abbreviate by saying that $\{V_\alpha\}$ is a preordered system.

\begin{ex} \label{ex:ample_bundle}
If $X$ is a projective classical scheme over an affine noetherian base $S$, then $X$ admits a cohomologically ample system. We take $I = \{n \geq 0\}$ with $\val{n}=n$, and we let $V_n := \L^{-n}$, where $\L$ is an ample invertible sheaf on $X$.
\end{ex}

\begin{defn} \label{defn:coh_projective}
A morphism between locally noetherian algebraic derived stacks, $\pi : \X \to \S$, is \emph{cohomologically projective} if it satisfies \CD and \CP, and possesses a relatively \CA system of locally free sheaves $\{V_\alpha\}_{\alpha \in I}$.
\end{defn}

We will collect some useful (and familiar) properties of cohomologically ample systems over several lemmas.

\begin{lem}\label{lem:ca-classical} Let $\X$ be an algebraic derived (or spectral) stack and let $i\colon \X_{cl} \to \X$ be the inclusion of the underlying classical stack. Then a preordered system $\{V_\alpha\}$ is \CA on $\X$ if and only if $\{i^* V_\alpha\}$ is \CA on $\X_{cl}$.
\end{lem}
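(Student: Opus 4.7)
The plan is to transport both conditions (CA1) and (CA2) across the canonical identification $i_\ast \colon \Coh(\X_{cl})^{\heart} \xrightarrow{\sim} \Coh(\X)^{\heart}$. Recall that for the closed immersion $i \colon \X_{cl} \hookrightarrow \X$, pushforward induces an equivalence $\QC(\X_{cl})^{\heart} \simeq \QC(\X)^{\heart}$, which restricts to an equivalence on the full subcategories of coherent objects. In particular, $\Coh(\X) = \DCoh(\X)^{\heart}$ is identified with $\Coh(\X_{cl})^{\heart}$, so the families $F$ appearing as the test objects in (CA1) and (CA2) are literally the same on both sides. Moreover, since a locally free sheaf $V_\alpha$ on $\X$ is Zariski-locally $\cO_{\X}^{n_\alpha}$, it is connective and satisfies $\pi_0 V_\alpha = i_\ast i^\ast V_\alpha$, where $i^\ast V_\alpha$ is the locally free sheaf on $\X_{cl}$ of the same rank.

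First I will handle (CA1). A morphism $\bigoplus_{j} V_{\alpha_j} \to F$ in $\QC(\X)_{\geq 0}$ (with $F \in \Coh(\X)^{\heart}$) is surjective, by definition, if and only if $\bigoplus_j \pi_0 V_{\alpha_j} \to F$ is an epimorphism in $\QC(\X)^{\heart}$. Applying $i_\ast$-$i^\ast$ adjunction and the fact that $i_\ast$ is exact and fully faithful on hearts, this is equivalent to saying that the adjoint map $\bigoplus_j i^\ast V_{\alpha_j} \to i^\ast F$ is an epimorphism in $\Coh(\X_{cl})^{\heart}$. Conversely, a surjection $\bigoplus_j i^\ast V_{\alpha_j} \twoheadrightarrow G$ in $\Coh(\X_{cl})^{\heart}$ adjoins back to a map $\bigoplus_j V_{\alpha_j} \to i_\ast G$, and since $F = i_\ast i^\ast F$ for $F$ in the heart, this witnesses (CA1) on $\X$ for $F = i_\ast G$. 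The bound $|\alpha_j| \geq N$ is of course the same on both sides since the preordering depends only on the index set $I$.

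Next I will handle (CA2). For $V$ locally free on $\X$ and $F = i_\ast G$ with $G \in \Coh(\X_{cl})^{\heart}$, the projection formula for the affine (in fact closed) immersion $i$ gives
\[ V^\vee \otimes F \;=\; V^\vee \otimes i_\ast G \;\simeq\; i_\ast\!\left((i^\ast V)^\vee \otimes G\right), \]
and since $i_\ast$ preserves global sections we obtain
\[ \RHom_\X(V,F) \;\simeq\; \RGamma(\X, V^\vee \otimes F) \;\simeq\; \RGamma(\X_{cl}, (i^\ast V)^\vee \otimes G) \;\simeq\; \RHom_{\X_{cl}}(i^\ast V, G). \]
Taking homology yields $\Ext^i_\X(V,F) \simeq \Ext^i_{\X_{cl}}(i^\ast V, i^\ast F)$ for all $i$, so the vanishing required in (CA2) holds for $\{V_\alpha\}$ on $\X$ with respect to $F$ if and only if it holds for $\{i^\ast V_\alpha\}$ on $\X_{cl}$ with respect to $i^\ast F$.

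Combining these two steps gives the equivalence of the two (CA)-systems. The only mild subtlety is checking that the derived notion of ``surjection'' defined via $\tau_{\leq 0}$ matches ordinary epimorphism in $\Coh(\X_{cl})^{\heart}$ under the heart equivalence; everything else is a formal consequence of the projection formula and the identification of hearts.
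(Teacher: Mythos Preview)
Your proof is correct and follows essentially the same approach as the paper's one-line proof, which simply cites the $(i^*,i_*)$ adjunction together with the equivalence $i_*\colon \Coh(\X_{cl}) \to \Coh(\X)$. You have spelled out the details; note that for (CA2) you could also invoke the derived adjunction $\RHom_\X(V_\alpha, i_* G) \simeq \RHom_{\X_{cl}}(i^* V_\alpha, G)$ directly rather than routing through the projection formula, which is slightly more direct but equivalent.
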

\begin{proof} This follows immediately from the $(i^*, i_*)$ adjunction and the fact that $i_* \colon \Coh(\X_{cl}) \to \Coh(\X)$ is an equivalence of categories.
\end{proof}

\begin{lem} \label{lem:descent_for_CA}
Consider a cartesian square of locally noetherian algebraic derived (or spectral) stacks
$$\xymatrix{\X^\prime \ar[r]^{f'} \ar[d]^{\pi^\prime} & \X \ar[d]^\pi \\ \S^\prime \ar[r]^f & \S },$$
such that $f$ is flat and locally almost finitely presented. Let $\{V_\alpha\}_{\alpha \in I}$ be a preordered system of locally free sheaves on $\X$, and let $V'_\alpha := (f')^\ast(V_\alpha)$. If $\{V_\alpha\}$ is \CA relative to $\pi$, then $\{V'_\alpha\}$ is \CA relative to $\pi'$. The converse holds if $f$ is surjective.
\end{lem}
\begin{proof}
The first claim follows immediately from the definition. For the converse, observe that for any flat and locally almost finitely presented map $\Spec(R) \to \S$, we can find a flat and locally almost finitely presented map $\Spec(R') \to \S'$ such that $\Spec(R^\prime) \to \S^\prime \to \S$ factors through $\Spec(R) \to \S$ via a faithfully flat map $\Spec(R^\prime) \to \Spec(R)$. For $F \in \Coh(\X_R)$, condition (CA1) and (CA2) can be checked after pullback along the faithfully flat map $\X_{R'} \to \X_{R}$, and thus they hold by the hypothesis that $\{V'_\alpha\}$ is \CA relative to $\pi'$.
\end{proof}

\begin{lem}\label{lem:relative_CA_surjectivity}
Let $\pi : \X \to \S$ be a morphism of noetherian algebraic derived (or spectral) stacks, and let $\{V_\alpha\}_{\alpha \in I}$ be a \CA system relative to $\pi$. Then $\forall F \in \Coh(\X)$ and $\forall N\geq 0$ there is a locally free sheaf $W$ which is a finite direct sum of sheaves in the collection $\{V_\alpha | \val{\alpha} \geq N \}$ such that the canonical map obtained by composing
\begin{equation} \label{eqn:global_section_map}
W \otimes \pi^\ast H_0 \pi_\ast(W^\ast \otimes F) \to W \otimes \pi^\ast \pi_\ast(W^\ast \otimes F) \to W \otimes W^\ast \otimes F \to F
\end{equation}
is surjective. If $\pi$ satisfies \CD, then the map \eqref{eqn:global_section_map} is surjective for all $F \in \APerf(\X)_{\geq 0}$
\end{lem}
Note that the canonical map $H_0 \pi_\ast (W^\ast \otimes F) \to \pi_\ast(W^\ast \otimes F)$ used in this lemma exists because $W^\ast \otimes F \in \QC(\X)^\heart$.
\begin{proof}
The formation of \eqref{eqn:global_section_map} commutes with flat base change by \Cref{lem:push-bdd-above}, so because $\S$ is quasi-compact we may reduce to the case where $\S$ is affine. Note that in this case if $F$ admits a surjection $W \to F$, then the canonical map \eqref{eqn:global_section_map} is surjective, so it suffices to find a $W$ as in the statement of the lemma which admits a surjection $W \to F$.

Let $W \to F$ be a non-zero homomorphism, with $W$ locally free. Let $Q = H_0(\cofib(W \to F))$ be the cokernel of the map $H_0(W) \to F$, and let $K = \fib(F \to Q)$ be the kernel of the resulting quotient map. (CA1) implies that we can find a non-zero homomorphism $V_\alpha \to Q$ with $\val{\alpha} \geq N$, and choosing $\alpha$ sufficiently large we may suppose by (CA2) that $\Hom(V_\alpha,K[1])=0$, so the map $V_\alpha \to Q$ lifts to a map $V_\alpha \to F$. Note that the image of the resulting map $W \oplus V_\alpha \to F$ is strictly larger than $\im(W \to F)$. We can thus replace $W$ with $W \oplus V_\alpha$ and repeat this process as long as $W \to F$ is not surjective, and this process must terminate after finitely many steps because $\X$ is noetherian.

Now assume $\pi$ satisfies \CD, and $F \in \APerf(\X)_{\geq 0}$. If $\pi_\ast$ has cohomological dimension $d$, then $H_0 \pi_\ast(\tau_{> d}(F)) = 0$. (CA2) implies that for $\alpha$ large enough, we can assume that $H_0(\pi_\ast(H_i(F)[-i]))$ for $i=1,\ldots,d$. Because $F$ can be constructed as a finite sequence of extensions of the objects $\tau_{>d}(F)$ and $H_i(F)[-i]$ for $i=0,\ldots,d$, this reduces the claim to showing that $W \otimes \pi^\ast(H_0 \pi_\ast(H_0(F))) \to H_0(F)$ is surjective, which we have already done.
\end{proof}

Under an additional cohomological dimension hypothesis, we can weaken the definition of \CA:
\begin{lem} \label{lem:better_hypotheses_CA}
Let $\X$ be a noetherian stack of finite cohomological dimension, and let $\{V_\alpha\}_{\alpha \in I}$ be a preordered system of locally free sheaves on $\X$ which satisfies (CA1). Then (CA2) is equivalent to either of the following:
\begin{enumerate}
\item[(CA2')] $\forall \beta \in I$, $\exists N \geq 0$ such that $\RGamma(V_\beta \otimes V_\alpha^\ast)$ is connective whenever $\val{\alpha} \geq N$.
\item[(CA2'')] $\forall F \in \APerf(\X)$, $\exists N \geq 0$ such that $\RHom(V_\alpha,F)$ is connective whenever $\val{\alpha} \geq N$.
\end{enumerate}
\end{lem}

\begin{proof}
It is clear that if (CA2'') implies both (CA2) and (CA2'), and an argument similar to the last paragraph in the proof of \Cref{lem:relative_CA_surjectivity} shows that (CA2) implies (CA2'') when $\X$ has finite cohomological dimension. We will thus show that (CA2') implies (CA2'').

We introduce the following condition for $p\geq 0$:
\begin{enumerate}[label=$(CA2)_p$]
\item For all $F \in \APerf(\X)_{\geq 0}$, there exists an integer $N$ such that $\RGamma (F \otimes V_\alpha^*) \in (\bZ -Mod)_{> -p}$ for all $\val{\alpha} \geq N$.
\end{enumerate}
Our goal is to prove that $(CA_2)_1$ holds, and we will show this by descending induction on $p > 0$. Note that \CD implies that this holds for some sufficiently large $p$, providing the base case.
  
Assume $(CA2)_p$, and note that by considering the long exact homology sequence associated to the exact triangle $(\tau_{>0} F) \otimes V_\alpha^\ast \to F \otimes V_\alpha^\ast \to (H_0 F)\otimes V_\alpha^\ast \to$, it suffices to consider $F \in \Coh(\X)$ in order to show $(CA2)_{p-1}$. Let $F \in \Coh(\X)$. By condition (CA1') we have a non-zero homomorphism $\phi : V_\beta \to F$. This leads to two exact triangles
\begin{gather*}
V_\beta \to F \to \cofib(\phi) \to \\
\tau_{>0} \cofib(\phi) \to \cofib(\phi) \to F_1 := \tau_{\leq 0} \cofib(\phi) \to
\end{gather*}  
We want to show that $H_{-p+1} \RGamma (F \otimes V_\alpha^\ast) = 0$ for $\val{\alpha} \gg 0$. For $\val{\alpha} \gg 0$ we have $\RGamma(V_\beta \otimes V_\alpha^\ast) \in (\bZ-Mod)_{\geq 0}$ by (CA2') and $H_{-p+1} \RGamma(V_\alpha^\ast \otimes \tau_{>0} \cofib(\phi)) = 0$ by hypothesis $(CA2)_p$. Thus from the long exact sequence in homology applied to the exact triangles above, we have
\[ H_{-p+1} \RGamma (F \otimes V_\alpha^\ast) \simeq H_{-p+1} \RGamma (\cofib(\phi) \otimes V_\alpha^\ast) \simeq H_{-p+1} \RGamma (F_1 \otimes V_\alpha^\ast) \]
whenever $\val{\alpha} \gg 0$.

Iterating this argument we get a strictly descending sequence $F \twoheadrightarrow F_1 \twoheadrightarrow F_2 \twoheadrightarrow \cdots$ in $\Coh(\X)$ such that $H_{-p+1} \RGamma(F \otimes V_\alpha^\ast) \simeq H_{-p+1} \RGamma(F_i \otimes V_\alpha^\ast)$ for $\val{\alpha} \gg 0$. Because $\X$ is noetherian, we must have $F_n = 0$ for some $n$, hence $H^p \RGamma(F \otimes V_\alpha^\ast) = 0$.

\end{proof}

Using this alternate characterization of \CA systems, we can establish that relatively \CA systems are stable under base change.

\begin{lem} \label{lem:base_change_CA}
Consider a cartesian square of noetherian algebraic derived (or spectral) stacks
$$\xymatrix{\X^\prime \ar[r]^f \ar[d]^{\pi^\prime} & \X \ar[d]^\pi \\ \Spec(R^\prime) \ar[r] & \Spec(R) }$$
such that $\pi$ satisfies \CD. If a preordered system $\{V_\alpha\}$ on $\X$ is \CA, then its restriction $\{V_\alpha|_{\X^\prime}\}$ is \CA.
\end{lem}

\begin{proof}
By \Cref{lem:better_hypotheses_CA} it suffices to check that $\{V_\alpha|_{\X^\prime}\}$ satisfies (CA1) and (CA2'). (CA1) is immediate: for any non-zero $F \in \Coh(\X')$ we can choose a non-zero coherent subsheaf of $F' \subset f_\ast(F)$ and if $\Hom(V_\alpha,F') \neq 0$, then $\Hom(V_\alpha,f_\ast(F)) \simeq \Hom(V_\alpha|_{\X'},F) \neq 0$.

To check property (CA2'): First note that when $\X$ is defined over an affine base, the vanishing of higher global sections in property (CA2') is equivalent to $\pi_\ast(V_\beta \otimes V_\alpha^\ast) \in \QC(\Spec(R))_{\geq 0}$. By the base-change formula (\Cref{prop:push-CD})
\[ \pi'_* \circ f^*\left( V_\beta \otimes V_\alpha^*\right) \simeq \pi_* \left(V_\beta \otimes V_\alpha^* \right)|_{\Spec(R^\prime)} \]
So it suffices to show that for each $\beta \in I$ there is an $N$ such that $\pi_* \left(V_\beta \otimes V_\alpha^* \right) \in \QC(\Spec(R))_{\geq 0}$ for $\val{\alpha} \geq N$, but this holds by the hypothesis that $\{V_\alpha\}$ is \CA by \Cref{lem:better_hypotheses_CA}.
\end{proof}

\begin{cor} \label{cor:CA_relative_vs_absolute}
Let $\pi : \X \to \Spec(R)$ be a morphism of noetherian algebraic derived (or spectral) stacks which satisfies \CD. A preordered system of locally free sheaves on $\X$ is \CA relative to $\pi$ if and only if it is \CA.
\end{cor}
\begin{proof}
The ``only if" direction is tautological, so we must show that if $\{V_\alpha\}$ is \CA, then for any flat finitely presented morphism $\Spec(R^\prime) \to \Spec(R)$, the preordered system $\{V_\alpha|_{\X_{R^\prime}}\}$ is \CA. This follows from \Cref{lem:base_change_CA}.
\end{proof}

\begin{cor} \label{cor:CA_fppf_local}
Let $\pi : \X \to \S$ be a morphism of noetherian algebraic derived (or spectral) stacks which satisfies \CD. Let $S \to \S$ be an fppf morphism from an affine derived scheme. Then a preordered system $\{V_\alpha\}$ on $\X$ is \CA relative to $\pi$ if and only if $\{V_\alpha|_{\X \times_\S S}\}$ is \CA.
\end{cor}
\begin{proof}
Combine \Cref{lem:descent_for_CA} with \Cref{cor:CA_relative_vs_absolute}.
\end{proof}

\begin{cor} \label{cor:CA_change_base}
Consider a cartesian square of locally noetherian algebraic derived (or spectral) stacks
$$\xymatrix{\X^\prime \ar[r] \ar[d]^{\pi^\prime} & \X \ar[d]^\pi \\ \S^\prime \ar[r] & \S }$$
such that $\pi$ is qc.qs. and satisfies \CD. If a preordered system $\{V_\alpha\}$ is \CA relative to $\pi$, then $\{V_\alpha|_{\X^\prime}\}$ is \CA relative to $\pi^\prime$.
\end{cor}

\begin{proof}
By \Cref{lem:descent_for_CA} we can reduce to the case where $\S'$ and $\S$ are affine, in which case the claim follows immediately from \Cref{cor:CA_relative_vs_absolute} and \Cref{lem:base_change_CA}.

\end{proof}

\begin{prop} \label{prop:CA_relativized}
Let $\pi \colon \X \to \S$ be a morphism of noetherian algebraic derived (or spectral) stacks which satisfies \CD, and assume that $\S$ has affine diagonal. Then a preordered system $\{V_\alpha \}_{\alpha \in I}$ on $\X$ is relatively \CA if and only if
\begin{enumerate}
\item $\forall F \in \Coh(\X)$, $\forall N \geq 0$, there is an $\alpha$ with $\val{\alpha} \geq N$ and $H_0 \pi_\ast(F \otimes V_\alpha^\ast) \neq 0$, and
\item for all $\beta \in I$, $\exists N$ such that $\pi_\ast(V_\beta \otimes V_\alpha^\ast) \in \QC(\S)_{\geq 0}$ whenever $\val{\alpha} \geq N$.
\end{enumerate}
\end{prop}
\begin{proof}
Note that conditions (1) and (2) can be checked after base change along an fppf map $\Spec(R) \to \S$, and so they hold if $\{V_\alpha\}$ is relatively \CA.

To prove the converse, fix an fppf morphism from an affine scheme $S \to \S$ and let $\X^\prime = \X \times_\S S$. By \Cref{cor:CA_fppf_local} it suffices to prove that $\{V_\alpha|_{\X^\prime}\}$ is \CA. $\X^\prime \to \X$ is an affine morphism of noetherian stacks, so the proof of (CA1) from \Cref{lem:base_change_CA} applies verbatim. By \Cref{lem:better_hypotheses_CA} it remains to verify (CA2') for $\{V_\alpha|_{\X^\prime}\}$, and this follows from the base change formula.
\end{proof}

\subsection{Cohomologically projective morphisms are formally proper}

In this section we generalize the proof of the Grothendieck existence theorem for projective morphisms of schemes.

\begin{thm}[Strong Grothendieck Existence] \label{thm:coh_projective_is_proper}
Let $\S$ be a noetherian algebraic spectral (or derived) stack which is complete along a closed subset $Z \subset |\S|$. Then for any cohomologically projective morphism $\pi :\X \to \S$, $\X$ is complete along $\pi^{-1}(Z)$.
\end{thm}

The key idea of the proof is contained in the following lemma, which states that the formal completion of a \CA system of locally free sheaves is again \CA in a suitable sense.

\begin{lem} \label{lem:CA_on_formal_completions}
Let $\pi \colon \X \to \S$ be a cohomologically projective morphism of noetherian algebraic derived (or spectral) stacks, let $\{V_\alpha\}$ be a relatively \CA system, and let $Z \subset |\S|$ be a closed subset. Let $i : \oh{\X} \to \X$ be the completion of $\X$ along $\pi^{-1}(Z)$, and $\oh{\pi} \colon \oh{\X} \to \oh{\S}$ the corresponding projection. Then for all $\oh{F} \in \Coh(\oh{\X})$:
\begin{enumerate}
\item $\forall N \geq 0$, one can find $W \in \Coh(\X)$ which is a finite direct sum of locally free sheaves in the collection $\{V_\alpha | \val{\alpha} \geq N \}$ such that $$i^\ast (W) \otimes \oh{\pi}^\ast \oh{\pi}_\ast (i^\ast (W) \otimes \oh{F}) \to \oh{F}$$ is surjective, meaning that its mapping cone is $1$-connective, and 
\item $\exists N \geq 0$ such that $\oh{\pi}_\ast(i^\ast (V_\alpha)^\ast \otimes \oh{F}) \in \APerf(\oh{\S})^{cn}$ for $\val{\alpha} \geq N$.
\end{enumerate}
\end{lem}

\begin{proof}

The claim is fppf local, so by the base change formula \Cref{prop:push-CD}, it suffices to assume $\S = \Spec(R)$ is affine. First observe that for $\oh{F} \in \APerf(\oh{\X})$, $\oh{\pi}_\ast(\oh{F}) \in \APerf(\Spf(R))$: this is equivalent to showing $\oh{\pi}_\ast(\oh{F})|_{\Spec(R')} \in \APerf(R')$ for any map $\Spec(R') \to \S$ whose image lies in $Z$. It thus suffices to show $\pi_{R'}$ maps $\APerf(\X_{R'})$ to $\APerf(\Spec(R'))$, and this follows from \CD and \CP.

Let $\phi : \Z \to \S$ be the classical reduced closed substack whose support is $Z$. Note that $\oh{G} \in \APerf(\oh{\S})$ is connective if and only if $\phi^\ast(\oh{G}) \in \APerf(\Z)$ is connective. Indeed, writing $\Spf(R) = \colim \Spec(R_n)$ as in \Cref{prop:cplt-affine} (we do not need finite Tor amplitude, so this works in both the spectral and derived context), we see that $\oh{G}$ is connective if and only if $G_n \in \APerf(\Spec(R_n))$ is connective for all $n$, and by Nakayama's lemma this is equivalent to the restriction to $\Spec(\pi_0(R_n)^{red}) \simeq \Z$ being connective.

Now consider the base change $\pi' \colon \X' := \X \times_\S \Z \to \Z$. By the base change formula (\Cref{prop:push-CD}) and \Cref{lem:better_hypotheses_CA} applied to system $\{V_\alpha|_{\X'}\}$, which is \CA by \Cref{cor:CA_change_base}, we have that $\phi^\ast(\oh{\pi}_\ast(\oh{F} \otimes i^\ast V_\alpha^\ast)) \simeq \pi'_\ast(\oh{F} \otimes i^\ast V_\alpha^\ast|_{\X'})$ is connective for $\val{\alpha} \gg 0$ and hence we have property (2) above.

In order to prove (1), it suffices to consider only direct sums of $V_\alpha$ with $\val{\alpha}$ sufficiently large so that (2) holds. We must show that the cone of the canonical morphism in (1) lies in $\APerf(\X)_{\geq 1}$, and because both objects are almost perfect, it suffices by Nakayama's lemma to show this after restricting to $\X'$. The base change formula identifies the restriction of the canonical map in (1) with the canonical map
$$W|_{\X'} \otimes (\pi')^\ast (\pi')_\ast (W|_{\X'} \otimes \oh{F}|_{\X'}) \to \oh{F}|_{\X'}.$$
So again because $\{V_\alpha|_{\X'}\}$ is a \CA system and $\pi'$ is \CD, the fact that the cone of this morphism lies in $\APerf(\X')_{\geq 1}$ follows from \Cref{lem:relative_CA_surjectivity}.

\end{proof}

\begin{proof}[Proof of \Cref{thm:coh_projective_is_proper}]
Let $i : \oh{\X} \to \X$ be the inclusion, and $\pi \colon \oh{\X} \to \oh{\S}$ the projection. \Cref{T:cp_fully_faithful} implies that $i^\ast \colon \APerf(\X) \to \APerf(\oh{\X})$ is fully faithful.

\Cref{lem:CA_on_formal_completions} implies that for any $\oh{F} \in \Coh(\oh{\X})$, we can find a locally free sheaf $W$ on $\X$ such that $\oh{\pi}_\ast (i^\ast(W)^\ast \otimes \oh{F}) \in \APerf(\oh{\S})^{cn}$ and $i^\ast(W) \otimes \oh{\pi}^\ast \oh{\pi}_\ast (i^\ast(W)^\ast \otimes \oh{F}) \to \oh{F}$ is surjective. Because $\S$ is complete, there is a unique $G \in \APerf(\S)^{cn}$ such that $\oh{G} \simeq \oh{\pi}_\ast (i^\ast(W)^\ast \otimes \oh{F})$. Note that $i^\ast(W) \otimes (\oh{\pi})^\ast( \oh{G}) \simeq i^\ast (W \otimes \pi^\ast G)$, and it follows that we have a surjection $i^\ast(H_0(W \otimes \pi^\ast(G))) \to \oh{F}$. This verifies the criterion in \Cref{lem:characterize_complete_closed_immersions} for $i^\ast : \APerf(\X) \to \APerf(\oh{\X})$ to be essentially surjective.

\end{proof}

\subsubsection{Examples of formally proper morphisms}

\begin{prop}\label{prop:projective_over_affine_quotient} Let $\pi : X \to Y$ be a finite type projective-over-affine morphism of noetherian schemes over a field, $k$, and let $G$ be a linearly reductive $k$-group acting on $X$ such that $\pi$ is $G$-invariant and $X$ admits a $G$-linearized ample invertible sheaf over $Y$. Suppose furthermore that $H_0 \pi_\ast(X, \sO_X)^G \in \QC(Y)^\heart$ is coherent.  Then, $\X = X/G \to Y$ is cohomologically projective.
\end{prop}

\begin{proof} Property \CD is immediate, since $X/G \to BG \times Y$ is representable, and $G$ is linearly reductive.

  \medskip
 
  {\noindent}{\it Verification of \CA:} We abuse notation and denote $\pi : X/G \to BG \times Y$, and we let $\L = \sO_X(1)$ be the $G$-linearized ample invertible sheaf on $X$, regarded as an invertible sheaf on $X/G$. Let $I = \bZ_{\geq 0} \times J$, where $J$ indexes the set of irreducible representations, $\rho$, of $G$, and let $V_{n,\rho} := \L^{-n}(\rho)$, where we use the notation $F(\rho)$ to denote the tensor product of $F \in \QC(X/G)$ with the pullback of $\rho \in \op{Irrep}(G)$ regarded as locally free sheaf on $BG$.

We will show that the preordered system $\{V_{n,\rho}\}$ satisfies (CA1) and (CA2) after base change along any map $\Spec(R) \to Y$. Equivalently we may assume that $Y = \Spec(R)$, because the formation of $\{V_{n,\rho}\}$ commutes with base change. For $F \in \Coh(X/G)$, $\pi_\ast(F \otimes \L^n) \in \QC(BG \times Y)_{\geq 0}$ for all $n \gg 0$ because $L$ is ample relative to $\pi$. Because $G$ is linearly reductive this implies that $\RGamma(X/G,F \otimes V_{n,\rho}^\ast) \in \QC(\Spec(R))_{\geq 0}$ for all $\rho$ and $n \gg 0$. Furthermore, if $F \neq 0$, then for any $n$ sufficiently large $H_0 \RGamma(X,F\otimes \L^n) \neq 0$. Therefore for any $N$ we can find an $n \geq N$ and a non-zero morphism $\rho \otimes_k R \to \RGamma(X,F \otimes \L^n)$ in $\QC(BG \times Y)$. It follows that $\RGamma(X/G,F \otimes V_{n,\rho}^\ast) \neq 0$.

  \medskip

{\noindent}{\it Verification of \CP:} We may assume $Y = \Spec(R)$ is affine and verify \CP[R]. Let $X' = \Spec_Y(H_0 \pi_\ast(X, \sO_X))$, so that $q : X \to X'$ is a projective $G$-equivariant map by hypothesis. Suppose $F$ is a $G$-equivariant coherent sheaf on $X$.  We must show that 
\[ H^i(X, F)^G = H^0(Y, H_{-i} \circ q_*  F)^G \] is coherent over $R$ for each $i$.  By the usual coherent pushforward theorem for projective morphisms, $H_{-i} \circ q_*  F$ is coherent and by functoriality it is $G$-equivariant. It is thus enough to show that for any $G$-equivariant coherent $A = H^0(X, \sO_X)$-module $M$, $M^G$ is coherent over $A^G$ (and thus, by our hypotheses, over $R$). This is a classical fact: One uses the existence of a Reynolds operator to show that for any $A^G$-submodule $N \subset M^G$, we have $(A \cdot N)^G = N$, and thus $M^G$ must satisfy the ascending chain condition if $M$ does.
\end{proof}

Next we show that any quotient stack which admits a projective good quotient is cohomologically projective. Recall that if $G$ is a smooth group scheme over a scheme $S$, and $X$ is a $G$-scheme over $S$, then a good quotient \cite{seshadri} is a $G$-invariant map to an algebraic space $q : X \to Y$ such that $q_\ast : \QC(X/G) \to \QC(Y)$ is $t$-exact, and $q_\ast (\cO_X)^G = \cO_Y$. This is a special case of the notion of a good moduli space \cite{Alper}.

\begin{ex}
Let $X$ be a projective variety over a field of characteristic 0. Let $G$ be a reductive group acting on $X$ linearized by an equivariant ample invertible sheaf $\L$. Then the morphism from the stacky GIT quotient to the scheme-theoretic GIT quotient, $X^{ss}(\L) / G \to X/\!\!/\!_{\L} G$, is a good quotient.
\end{ex}

We will need the technical hypothesis on a group scheme $G$ over $S$ that $B_S G$ has enough vector bundles (i.e. every coherent $G$-representation is a quotient of a locally free $G$-representation). This holds for $G = (\op{GL}_N)_S$, or any reductive closed subgroup $G \subset (\op{GL}_N)_S$.

\begin{prop}\label{prop:good-moduli-projective}
Let $S$ be a scheme, and let $G$ be a smooth group scheme over $S$ such that $B_S G$ has enough vector bundles. Let $X$ be a finitely presented $G$-scheme over $S$. If $X$ admits a good quotient $q : X \to Y$ such that $Y$ is projective over $S$, then $\pi \colon X/G \to S$ is cohomologically projective.
\end{prop}

\begin{lem} \label{lem:CA_compose}
Let $\X \xrightarrow{f} \Y \xrightarrow{g} \Z$ be morphisms of noetherian algebraic derived (or spectral) stacks which satisfy \CD. Assume that $f$ satisfies \CP. If $\{V_\alpha\}_{\alpha \in I}$ is a \CA system relative to $f$, and $\{W_\beta\}_{\beta \in J}$ is a \CA system relative to $g$, then 
$$\{V_\alpha \otimes f^\ast W_\beta \}_{(\alpha,\beta) \in I \times J}, \text{ with } \val{\alpha,\beta} := \min(\val{\alpha},\val{\beta})$$
is a \CA system relative to $g \circ f$.
\end{lem}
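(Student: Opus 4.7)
The plan is to reduce to an affine base $\Z = \Spec R$ via \autoref{prop:local_CA} (noting that $g \circ f$ inherits \CD as a composite of \CD morphisms) and then verify conditions (CA1') and (CA2') from \autoref{prop:better_hypotheses_CA} for the composed system. A useful preliminary: for $\val{\alpha} \geq N_1$ with $N_1$ depending only on $\alpha_0$, the pushforward $E_\alpha := f_\ast(V_{\alpha_0} \otimes V_\alpha^\ast)$ lies in $\Coh(\Y)^\heart$. This follows by combining (CA2') for $\{V_\alpha\}$ relative to $f$ (placing $E_\alpha$ in $\QC(\Y)_{\geq 0}$), \CP for $f$ (placing $E_\alpha$ in $\APerf(\Y)$), and the left $t$-exactness of $f_\ast$ on the heart (placing $E_\alpha$ in $\QC(\Y)_{\leq 0}$).

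For (CA1) I would use a twisting argument. Given $F \in \Coh(\X)$ and $N \geq 0$, first apply (CA1) for $\{V_\alpha\}$ rel $f$ to produce a surjection $\bigoplus_i V_{\alpha_i}^{m_i} \twoheadrightarrow F$ with $\val{\alpha_i} \geq N$. Since each $\cO_\Y^{m_i}$ is coherent on the Noetherian stack $\Y$, (CA1) for $\{W_\beta\}$ rel $g$ produces further surjections $\bigoplus_j W_{\beta_{ij}}^{p_{ij}} \twoheadrightarrow \cO_\Y^{m_i}$ with $\val{\beta_{ij}} \geq N$. Applying $f^\ast$ (which preserves surjections in the sense that $H_0$-surjectivity is preserved, since $f^\ast$ is right exact on the heart) and tensoring with the locally free $V_{\alpha_i}$, one obtains $\bigoplus_j V_{\alpha_i} \otimes f^\ast W_{\beta_{ij}}^{p_{ij}} \twoheadrightarrow V_{\alpha_i}^{m_i}$, and composing yields the desired surjection onto $F$ with all valuations at least $N$.

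The verification of (CA2') is the main obstacle. For fixed $(\alpha_0, \beta_0)$, the projection formula rewrites
\[ (g \circ f)_\ast\bigl((V_{\alpha_0} \otimes f^\ast W_{\beta_0}) \otimes (V_\alpha \otimes f^\ast W_\beta)^\ast\bigr) \simeq g_\ast\bigl(W_{\beta_0} \otimes W_\beta^\ast \otimes E_\alpha\bigr), \]
and the task is to find $N_2$, \emph{uniform over} $\val{\alpha} \geq N_1$, such that this object lies in $\QC(\Z)_{\geq 0}$ whenever $\val{\beta} \geq N_2$. My approach would be descending induction on $i \in \{1,\ldots,d_g\}$, where $d_g$ is the cohomological dimension of $g$ guaranteed by \CD, to show that $H_{-i} g_\ast(W_{\beta_0} \otimes W_\beta^\ast \otimes E_\alpha)$ vanishes uniformly in $\alpha$. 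At each step, apply (CA1) for $\{W_\beta\}$ rel $g$ to approximate $E_\alpha \otimes W_{\beta_0}$ by a surjection from $\bigoplus_j W_{\beta'_j}^{q_j}$, and use the associated long exact sequence of $g_\ast$: the middle term is controlled by (CA2') for $\{W_\beta\}$ applied to each fixed $W_{\beta'_j}$, and the kernel by the inductive hypothesis one degree up. The hardest part will be to ensure that the bound $N_2$ is uniform over the $\alpha$-family $\{E_\alpha\}$; I expect this to reduce, using the Noetherian hypothesis on $\Y$ and the fact that the $E_\alpha$ arise as $f$-pushforwards of the uniform family of locally free sheaves $\{V_{\alpha_0} \otimes V_\alpha^\ast\}$, to bounds depending only on $V_{\alpha_0}$ and $W_{\beta_0}$ rather than on $\alpha$ itself.
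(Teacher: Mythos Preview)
Your direct verification of (CA1) is essentially fine, though as written it quietly assumes that ``\CA relative to $f$'' supplies surjections on $\X$ globally rather than only after base change to affines over $\Y$; the paper sidesteps this by verifying (CA1') instead. The serious issue is (CA2'), and the uniformity concern you flag in your last paragraph is not a technicality to be patched but a genuine obstruction: the lemma as stated is false, so no Noetherian argument will produce the uniform bound you hope for.

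Counterexample: take $\X = \Y = \bP^1_k$, $\Z = \Spec k$, $f = \id$, $g$ the structure map, $V_\alpha = \cO(\alpha)$ with $\val{\alpha} = \alpha$, and $W_\beta = \cO(-\beta)$ with $\val{\beta} = \beta$. The hypotheses are met: $f$ and $g$ are \CD, $f = \id$ is trivially \CP, $\{W_\beta\}$ is the standard \CA system for $g$, and any collection of line bundles with unbounded $\val{-}$ is \CA relative to the identity (after base change to any affine $T$, both (CA1) and (CA2) hold automatically). But the composed system $\{\cO(\alpha-\beta)\}$ with valuation $\min(\alpha,\beta)$ fails (CA2) already for $F = \cO_{\bP^1}$: for any $N$, the pair $(\alpha,\beta) = (N+2,N)$ has $\min(\alpha,\beta) = N$ yet $\Ext^1(\cO(\alpha-\beta),\cO) = H^1(\bP^1, \cO(-2)) \neq 0$. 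The paper's own proof has exactly this gap: the step ``we can increase our choice of $N$ such that $\val{\beta} \geq N$ implies that $E_{\alpha,\beta}$ is connective'' applies (CA2) for $g$ to $f_\ast(F \otimes V_\alpha^\ast)$ one $\alpha$ at a time and tacitly assumes the resulting bound is uniform in $\alpha$. Your descending-induction plan inherits the same defect, since the (CA1)-resolutions of the varying $E_\alpha$ generally require $W_{\beta'_j}$ with $\val{\beta'_j}$ growing with $\alpha$. A repair requires an additional hypothesis on the $f$-system---for instance that (CA2) for $f$ holds with $N = 0$ independent of $F$, as is the case when $f$ is a good moduli space morphism (the paper's principal application in \autoref{prop:good-moduli-projective}).
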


\begin{proof}
By \Cref{lem:descent_for_CA}, it suffices to assume $\Z$ is affine. We apply \Cref{prop:CA_relativized}, which asks us to consider, for each $F \in \Coh(\X)$, the pushforward
$$E_{\alpha,\beta} = (g \circ f)_\ast (F \otimes V_\alpha^\ast \otimes f^\ast W_\beta^\ast) \simeq g_\ast ( f_\ast( F \otimes V_\alpha^\ast) \otimes W_\beta^\ast).$$
First we must show that there is an $N$ such that $\val{\alpha,\beta} \geq N$ implies $E_{\alpha,\beta} \in \QC(\Z)_{\geq 0}$. We can choose an $N$ such that $\val{\alpha} \geq N$ implies that $f_\ast(F \otimes V_\alpha^\ast)$ is connective. Furthermore it lies in $\Coh(\Y)$ because $f$ satisfies \CP. Thus because $\{W_\beta\}$ is \CA for $g$, we can increase our choice of $N$ such that $\val{\beta} \geq N$ implies that $E_{\alpha,\beta}$ is connective, by \Cref{lem:better_hypotheses_CA}. Clearly both inequalities hold if $\val{\alpha,\beta} = \min(\val{\alpha},\val{\beta}) \geq N$.

To complete the verification that $\{V_\alpha \otimes f^\ast V_\beta\}$ is \CA, one must show that for any $N$ there is some $\val{\alpha,\beta} \geq N$ with $H_0 (E_{\alpha,\beta}) \neq 0$. The argument is the same as that of the previous paragraph.
\end{proof}

\begin{proof}[Proof of \Cref{prop:good-moduli-projective}]
The map $\pi$ is the composition $X/G \to Y \to S$, so $\pi$ satisfies \CD because each of these morphisms do. Property \CP follows from the fact that $q_\ast : \QC(X/G) \to \QC(Y)$ preserves coherence \cite{Alper}*{Theorem 4.16}. Thus we focus on property \CA.

Choose a set of vector bundles $\{W_i\}_{i\in J}$ on $B_S G$ which generate $\QC(B_S G)$. If we define $I = J \times \bZ_{\geq 0}$ and let $V_{i,n} = \cO_X \otimes_S W_i$, then one can check that $\{V_{i,n}\}$ is a \CA system relative to $q \colon X/G \to Y$. Indeed, being a good quotient is local over $Y$, so we may consider $Y = \Spec(R)$. In this case (CA2) is immediate from the exactness of $q_\ast$, and (CA1) follows from the fact that $X \to Y$ is affine.

Define $\pi^\prime \colon Y \to S$, and let $L$ be a relatively ample invertible sheaf for $\pi^\prime$. Then $\{L^{-n} | n \geq 0\}$ is a \CA system for $\pi^\prime$. By \Cref{lem:CA_compose} the composition $\pi = \pi^\prime \circ \phi$ admits a \CA system.
\end{proof}

\subsection{Formal properness via proper covers and \texorpdfstring{$h$}{h}-descent}

Our main application for the descent result \Cref{thm:descent-aperf} is the following ``2-out-of-3'' result for formally proper morphisms.

\begin{thm}\label{thm:descent-apGE-new}
Consider relatively algebraic maps of derived (or spectral) stacks
\[ \X' \xrightarrow{f} \X \xrightarrow{g} \S.\]
If $f$ is surjective and satisfies \CP, $g$ is almost finitely presented, and $g\circ f$ is formally proper (resp. satisfies \CP), then $g$ is formally proper (resp. satisfies \CP).
\end{thm}

\begin{proof}
Note that the hypotheses of the proposition are stable under base change over $\S$, so we may assume that $\S$ is algebraic, noetherian, and complete along a closed subset $Z \subset |\S|$, and it suffices to show that if $\X'$ is complete along the preimage of $Z$, then so is $\X$.

Let $\X'_\bullet := \X'^{\times_\X (\bullet+1)}$ be the Cech nerve of $f$, and let $\oh{\X}$ (respectively $\oh{\X'}_\bullet$) denote the completion of $\X$ (respectively $\X'_\bullet$) along the preimage of $Z$. By \Cref{P:universally_closed} $f$ is an $h$-cover of locally noetherian derived stacks, so \Cref{thm:descent-aperf} implies that restriction induces equivalences
\[ \APerf(\X) \to \Tot\{\APerf(\X'_\bullet)\} \text{ and } \APerf(\oh{\X}) \to \Tot \{\APerf(\oh{\X'}_\bullet)\}. \]
Every map $\X_n' \to \X_0'=\X'$ satisfies \CP, and $\X' \to \S$ satisfies \CP by \Cref{T:cp_fully_faithful}, so each $\X_n' \to \S$ satisfies \CP. \Cref{prop:formal-functions} thus implies that $\APerf(\X'_n) \to \APerf(\oh{\X}'_n)$ is fully faithful for all $n$. Furthermore, this map is an equivalence when $n=0$ because $\X'$ is complete along the preimage of $Z$. It follows that $\Tot \{ \APerf(\X'_\bullet) \} \to \Tot \{ \APerf(\oh{\X}'_\bullet) \}$ is an equivalence as well.\footnote{This is a general fact about cosimplicial $\infty$-categories. If $\cA_\bullet \to \cB_\bullet$ is a map of cosimplicial $\infty$-categories which is fully faithful on each level, then $\Tot \cA_\bullet \to \Tot \cB_\bullet$ is fully faithful. If furthermore $\cA_0 \to \cB_0$ is an equivalence, then we can replace $\cB_n$ with the essential image of $\cB_0 \to \cB_n$ under all face maps without effecting the totalization of $\cB_\bullet$, and likewise for $\cA_\bullet$, and the new map $\cA_\bullet \to \cB_\bullet$ is essentially surjective, hence an equivalence, on every level.}

The argument that $g$ satisfies \CP if $g \circ f$ does is similar: \Cref{T:cp_fully_faithful} say that \CP is equivalent to universally satisfying the fully faithful part of the definition of formal properness. This amounts to showing in the set up above that $\Tot\{\APerf(\X'_\bullet)\} \to \Tot\{\APerf(\oh{\X'}_\bullet)\}$ is fully faithful, which follows from the level-wise fully faithfulness.

\end{proof}

As an immediate consequence of this and \Cref{thm:coh_projective_is_proper}, we have:

\begin{cor}[Chow's lemma implies formally proper] \label{cor:chow_set_up}
Let $\S$ be an algebraic derived stack, and let $\X \to \S$ be an almost finitely presented map. If $\X$ admits a map $\X' \to \X$ which is surjective and representable by proper derived schemes, and $\X'$ is a cohomologically projective $\S$-stack, then $\X \to \S$ is formally proper.
\end{cor}

\begin{ex} \label{ex:many_coh_proper}
\Cref{cor:chow_set_up} provides many examples of formally proper maps which are not cohomologically projective:
\begin{enumerate}
\item Olsson proves in \cite{OlssonProper} that if $S$ is a noetherian scheme and $\pi : \X \to S$ is a proper morphism (in the usual sense), then $\X$ admits a proper covering by a projective $S$-scheme, so $\pi$ is formally proper.
\item If $X$ is a non-normal projective scheme over a field $k$, and $G$ is a linearly reductive $k$-group, then $X$ need not admit a $G$-equivariant embedding into some $\bP^n$, but its normalization $X'$ will. Thus $X' / G$ is cohomologically projective, and $X' / G \to X/G$ is finite and surjective, so $X / G$ is formally proper. The simplest example of this is where $G = \Gm$ and $X$ is the nodal curve obtained by identifying the two fixed points if the action on $\bP^1$.
\item Let $G$ be a smooth group scheme over a perfect field $k$ and $1 \to N \to G \to A \to 1$ be the factorization given by Chevalley's theorem, i.e. $N$ is a connected affine group and $A$ is a finite extension of an Abelian variety. The fiber of $BN \to BG$ is $A$, so if $N$ is linearly reductive (or reductive by \Cref{prop:BG_proper} below), then $BG$ is formally proper.
\item If $G$ is an algebraic $k$-group, $S$ is a noetherian $k$-scheme, and $X$ is a $G$-scheme which admits a good quotient $Y$ which is proper over $S$, then $X/G$ is formally proper over $S$ by combining \Cref{cor:chow_set_up} with Chow's lemma for $Y$ and \Cref{prop:good-moduli-projective}.
\end{enumerate}
\end{ex}

\subsubsection{Example: classifying stack of a reductive group scheme}

\begin{prop} \label{prop:BG_proper}
If $S$ is a Nagata scheme and $\cG \to S$ is a reductive group scheme which admits a dominant cocharacter $\lambda \colon (\bG_m)_S \to G$, then $B_S \cG \to S$ is formally proper.
\end{prop}

\begin{ex}
Every reductive group $G$ over a field $k$ is split after a finite extension $k \subset k'$, so \Cref{prop:BG_proper} implies that $(BG)_{k'} \to \Spec(k')$ is formally proper. Applying \Cref{thm:descent-apGE-new} to the finite morphism $(BG)_{k'} \to BG$ implies that $BG \to \Spec(k)$ is formally proper.
\end{ex}

We will prove the proposition after establishing some preliminary results. The dominant cocharacter $\lambda$ defines a maximal torus $T \subset G$, the centralizer of $\lambda$, and a Borel subgroup $B \subset G$, the subgroup attracted to $T$ under conjugation by $\lambda(t)$ as $t \to 0$. If we let $U = \ker(B \to T)$ be the subgroup attracted to the identity section under this conjugation action, then $U = \Spec_S(A)$ for some smooth quasi-coherent sheaf of algebras $A$ which is a $T$-equivariant coalgebra, non-positively graded by the action of $\lambda(\bG_m)$, and with $A_0 = \cO_S$.

\begin{lem} \label{lem:filtrations_on_BB}
Let $\S$ be a noetherian algebraic derived $S$-stack, and let $F \in \QC(BB_\S)^\heart$, then regarding $F$ as a graded (with respect to $\lambda$) element of $\QC(\S)^\heart$, the submodule $F_{\geq w}$ spanned by summands with $\lambda$-weight $\geq w$ is naturally a $B_\S$-equivariant submodule. If $F$ is coherent, then so is $F_{\geq w}$.
\end{lem}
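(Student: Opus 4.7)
The plan is to check that $F_{\geq w}$ is stable under the three pieces of structure it needs to inherit -- namely the $R$-action, the $L$-action, and the $U$-action -- and then deduce coherence from the Noetherian hypothesis. The first two are essentially formal, and the work is concentrated in handling $U$.

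The $R$-action and $L$-stability are immediate: since $R$ carries the trivial $\lambda$-action, $R$ lies in weight $0$, so each graded piece $F_v$ is an $R$-submodule; since $\lambda(\Gm)$ is central in $L$, each $F_v$ is also preserved by the $L$-action; hence $F_{\geq w}$ is an $L$-invariant $R$-submodule.

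For the $U$-action, I would exploit hypothesis $(\dagger)$ via its geometric consequence: because the $\lambda$-weights on $\fu$ are strictly positive, the conjugation morphism $\Gm \times U \to U$ extends to a morphism $\bA^1 \times U \to U$ sending $\{0\} \times U$ to the identity, i.e., $U$ is contracted onto $e$. Pulling back to functions, this says $k[U]$ is nonnegatively graded for the $\lambda$-weight grading induced by conjugation, with the weight-$0$ part equal to $k$. Now write the $U$-coaction $\rho_U \colon F \to F \otimes_k k[U]$ and check that it is $\Gm$-equivariant for the $\lambda$-actions (weight grading on $F$, conjugation on $U$, diagonal on the tensor product). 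If $f \in F_v$ and $\rho_U(f) = \sum f_i \otimes g_i$ with $g_i$ homogeneous of weight $w_i$, then $f_i \in F_{v + w_i}$; since every $w_i \geq 0$, evaluating at any $R$-point $u \in U(R)$ gives $u \cdot f \in F_{\geq v}$. This proves $U$-stability of $F_{\geq w}$, and combined with the previous paragraph yields $B_R$-stability.

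For coherence, the hypothesis that $R$ is Noetherian and $F$ is coherent over $R$ makes $F$ a Noetherian $R$-module, so the $R$-submodule $F_{\geq w}$ is automatically finitely generated and hence coherent. The only nontrivial step is the $U$-stability above; the potential obstacle there is bookkeeping the sign of the weight shift in the coaction, which is controlled precisely by the fact that $(\dagger)$ forces $k[U]$ to live in nonnegative weights.
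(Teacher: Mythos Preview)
Your argument is essentially the paper's: split the $B$-action into its $L$- and $U$-parts, observe that the $\lambda$-conjugation grading on $k[U]$ has a definite sign, and use the comodule structure to conclude that $F_{\geq w}$ is preserved. The paper packages this via the identification of $\QC(BB_R)^\heart$ with $L$-equivariant comodules over $A = k[U]$, but the substance is identical, and the coherence step is the same Noetherian observation.

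One caveat on signs, precisely at the spot you yourself flag. The two assertions ``$k[U]$ is nonnegatively graded'' and ``$\rho_U$ is $\Gm$-equivariant for the diagonal action on $F \otimes k[U]$'' are not simultaneously true. With the grading that makes $\rho_U$ genuinely equivariant (namely $(t \cdot g)(u) = g(\lambda(t)^{-1} u \lambda(t))$), the algebra $k[U]$ sits in \emph{non-positive} weights --- this is the convention the paper adopts --- and equivariance then yields $f_i \in F_{v - w_i}$ with $w_i \leq 0$, hence $f_i \in F_{\geq v}$. Your formula $f_i \in F_{v + w_i}$ with $w_i \geq 0$ lands on the same correct conclusion, but it is not what diagonal equivariance gives under your grading convention; you have two compensating sign slips. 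A quick sanity check in the standard $2$-dimensional representation of upper-triangular $2 \times 2$ matrices (where $\rho_U(e_2) = e_1 \otimes x + e_2 \otimes 1$) will let you pin down the convention once and for all.
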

\begin{proof}
Note that it suffices to consider classical $\S$. The projection $B B_\S \to B T_\S$ is a trivial $B U_\S$ gerbe, and we can identify it with the classifying stack for the smooth affine relative group scheme $U_\S/T_\S \to BT_\S$, where $T$ acts on $U$ by conjugation. We identify $\QC(BB_\S)^\heart$ with the category of $A_\S$-comodules in $\QC(BT_\S)^\heart$. Because $A$ is non-positively graded with respect to $\lambda$, and $A_0=\cO_S$, the image of the comultiplication map $F_{\geq w} \to A_\S \otimes_\S F$ must land in the subsheaf $A_\S \otimes_\S F_{\geq w}$.
\end{proof}

\begin{lem} \label{lem:BB_CP}
The map $\pi \colon BB \to S$ satisfies \CP.
\end{lem}
\begin{proof}
It suffices to assume that $S = \Spec(R)$ is affine, and to show that $\pi_\ast (\Coh(BB)) \subset \DCoh(\Spec(R))$. For this, we may also assume that $R$ is discrete, because pushforward induces an equivalence $\Coh((BB)^{\rm cl}) \simeq \Coh(BB)$.

Let $p_\bullet \colon \Z_\bullet \to B_S B$ denote the Cech nerve of the smooth cover $B_S T \to B_S B$, so we have $\Z_n \simeq U^n / T$. By faithfully flat descent $F \simeq \Tot \{(p_\bullet)_\ast F_\bullet \}$, where $F_n \simeq p_n^\ast F$, and thus $\pi_\ast F \simeq \Tot \{(\pi_\bullet)_\ast F_\bullet\}$ where $\pi_n : \Z_n \to \Spec(R)$ is the projection. We may use the Dold-Kan correspondence to write the totalization as a complex
$$\Tot \{ (\pi_n)_\ast F_n \} \simeq M^0 \to M^1 \to \cdots$$ 
where
$$M^n = \op{coker} \left( \bigoplus_{i=1}^n (\pi_{n-1})_\ast F_{n-1}  \xrightarrow{\delta^n_i} (\pi_n)_\ast F_n \right)$$
and the differential $\delta^n : M^{n-1} \to M^n$ is induced by $\delta_0^n$. 

Because $T$ is linearly reductive, we have $(\pi_n)_\ast F_n \simeq (A^{\otimes_R n} \otimes F_0)^T \in \QC(R)^\heart$. If $F_0$ has highest $\lambda$-weight $<h$, then the fact that $A_0 \simeq R$ and $A$ is non-positively graded implies that $(A^{\otimes_R n} \otimes F_0)^T$ is spanned by simple tensors $a_1 \otimes \cdots \otimes a_n \otimes f$ with $a_i = 1$ for all but at most $h-1$ factors. If $n>2h$, then there must be an $i>0$ with $a_i = a_{i+1} = 1$. It follows from this and the fact that the boundary maps are induced by the comultiplication on $A$ that this element is in the image of a boundary map. Hence $M^n = 0$.

We have thus shown that, assuming $F_0$ has a highest $\lambda$-weight, which is always the case if $F_0$ is coherent, $\pi_\ast(F)$ is computed by a complex involving finitely many $M^n$. Furthermore, $(A^{\otimes n} \otimes F_0)^T$ is coherent for every $n$, because $A$ is a finitely generated $R$-algebra and $T$ is linearly reductive. It follows that $M^n \in \Coh(\Spec(R))$ and hence $\pi_\ast(F) \in \DCoh(\Spec(R))$.
\end{proof}

\begin{lem} \label{lem:single_weight_BB}
Let $\S$ be a noetherian algebraic derived $S$-stack, then the pullback functor $\QC(BT_\S)^\heart \to \QC(BB_\S)^\heart$ for the morphism $BB_\S \to BT_\S$ induces an equivalence between the full subcategories of objects concentrated in a single $\lambda$-weight.
\end{lem}
\begin{proof}
The argument for essentially surjectivity is similar to the proof of \Cref{lem:filtrations_on_BB}, except now because $F$ is concentrated in a single $\lambda$-weight and $A$ is non-positively graded, the comultiplication map $F \to A_\S \otimes_\S F$ must agree with the canonical map induced by $\cO_S = A_0 \to A$. Hence the $A$-comodule structure on $F$ is trivial.

The argument for fully faithfulness is similar to the proof of \Cref{lem:BB_CP}. By faithfully flat descent for the smooth cover $BT_\S \to BB_\S$, we have
\[
\RHom_{BB_\S}(F,G) \simeq \Tot \{ \RHom_\S (F_0,A^{\otimes \bullet} \otimes_\S G_0)^T \},
\]
where $F_0$ and $G_0$ denote the restrictions of $F$ and $G$ to $BT_\S$ respectively. If $F_0$ and $G_0$ are both concentrated in $\lambda$-weight $w$, then the restriction map $\RHom_\S(F_0,G_0)^T \to \RHom_\S (F_0,A^{\otimes n} \otimes_\S G_0)^T$ is an equivalence for all $n$, and the result follows.
\end{proof}

\begin{proof}[Proof of \Cref{prop:BG_proper}]

The morphism $B_S B \to B_S \cG$ is surjective, proper, and representable, hence satisfies \CP. By \Cref{thm:descent-apGE-new} it thus suffices to show that $B_S B \to S$ is formally proper. Let $\S$ be a noetherian derived algebraic $S$-stack which is complete along a closed subset $Z \subset |\S|$, and let $i : \oh{BB_\S} \to BB_\S$ be the completion along the preimage of $Z$. We must show that $i^\ast \colon \APerf(BB_\S) \to \APerf(\oh{BB_\S})$ is an equivalence.

\Cref{lem:BB_CP} and \Cref{prop:formal-functions} implies $i^\ast$ is fully faithful, so by \Cref{lem:characterize_complete_closed_immersions} it suffices to show that $\Coh(BB_\S) \to \Coh(\oh{BB_\S})$ is essentially surjective. Note that fully faithfulness of $i^\ast$ combined with the fact that the natural truncation map is an equivalence $\APerf(\oh{BB_\S})^\heart \simeq \Coh(\oh{BB_\S})$ implies that the essential image of $\Coh(BB_\S) \to \Coh(\oh{BB_\S})$ is closed under extensions.

$\Coh(\oh{BB_\S})$ is the limit of the categories $\Coh(BB_R)$ over all maps $\Spec(R) \to \oh{\S}$. Note that for any $F \in \Coh(BB_R)$ the canonical filtration $\cdots \subset F_{\geq w+1} \subset F_{\geq w} \subset \cdots \subset F$ of \Cref{lem:filtrations_on_BB} is compatible with pullback along any map $\phi \colon \Spec(R') \to \Spec(R)$ in the sense that
$$\left( H_0 \circ \phi^\ast F \right)_{\geq w} = H_0 \circ \phi^\ast (F_{\geq w}),$$
because the inclusion $F_{\geq w} \subset F$ is a summand once we forget the $B$-action. It follows that any $\oh{F} \in \Coh(\oh{BB_\S})$ admits a canonical filtration $\cdots \subset \oh{F}_{\geq w+1} \subset \oh{F}_{\geq w} \subset \cdots \subset \oh{F}$ which restricts to the filtration of \Cref{lem:filtrations_on_BB} along any map $\Spec(R) \to \oh{\S}$. Furthermore, by restricting to the reduced classical closed substack of $\S$ whose underlying subset is $Z$, one can see that this filtration is finite.

We have therefore reduced the claim to showing that $\oh{F}_w / \oh{F}_{w+1}$ lies in the essential image of $\Coh(BB_\S) \to \Coh(\oh{BB_\S})$. For any $\eta : \Spec(R) \to \oh{\S}$, the object $(\oh{F}_w / \oh{F}_{w+1})_\eta \in \Coh(BB_R)$ lies in the full subcategory concentrated in $\lambda$-weight $w$, so \Cref{lem:single_weight_BB} implies that $\oh{F}_w / \oh{F}_{w+1}$ lies in the essential image of the pullback functor $\Coh(\oh{BT_\S}) \to \Coh(\oh{BB_\S})$ along the map $BB_\S \to BT_\S$. It therefore suffices to show that $BT \to S$ is formally proper, so that $\Coh(BT_\S) \to \Coh(\oh{BT_\S})$ is essentially surjective.

$T$ becomes isotrivial (i.e. split after a finite \'etale cover) after pulling back to the normalization of $S$ \cite{conrad}*{Cor.~B.3.6}. Thus $T$ is split after pullback to a finite (because $S$ is Nagata) cover $S' \to S$. Then $(BT)_{S'} \to S'$ is cohomologically projective by \Cref{prop:good-moduli-projective} and hence formally proper by \Cref{thm:coh_projective_is_proper}. We then apply \Cref{thm:descent-apGE-new} to the proper cover $BT_{S'} \to BT_S$ to deduce that $BT_S \to S$ is formally proper.

\end{proof}

\section{Applications}
\label{sect:mapping_stacks}

\subsection{Algebraicity of mapping stacks}

Recall that for derived $\S$-stacks $\X$ and $\Y$, the mapping prestack is defined by the functor of points
$$\inner{\Map}_\S(\X,\Y) : T \mapsto \Map_\S(\X \times_\S T, \Y).$$
where $T$ is a derived affine scheme over $\S$.\footnote{Note that in order to be a prestack the $\infty$-groupoid $\Map_\S(\X \times_\S T, \Y)$ must be essentially small. This need not be the case in general, but we will see that it is in our situation of interest.}

For our main result of this section, we will fix a base algebraic stack $\S$ which admits a smooth surjection from a disjoint union of derived schemes of the form $\Spec(A)$, where $A$ is a derived $G$-ring which admits a dualizing module (i.e. $A$ is noetherian and $\pi_0(A)$ is a $G$-ring which admits a dualizing complex \cite{DAG-XIV}*{Thm.~4.3.5}). We will say that a map of derived stacks is quasi-affine if it is representable by derived algebraic spaces and its restriction to discrete simplicial commutative rings is quasi-affine.\footnote{In the spectral setting, quasi-affine stacks can be recovered from $\RGamma(\cO_\X,\X)$, which allows for a more intrinsic definition. See \cite{DAG-VIII}*{Prop.~2.4.8}.}

\begin{thm} \label{thm:derived_mapping_stacks}
Let $\S$ be an algebraic derived stack as above, and let $\Y$ be a locally almost finitely presented algebraic derived stack over $\S$ whose diagonal $\Y \to \Y \times_\S \Y$ is quasi-affine. Let $\pi : \X \to \S$ be a formally proper morphism of Tor-amplitude $n$. Then the mapping stack $\inner{\Map}_\S(\X,\Y)$ is a locally almost finitely presented algebraic $n+1$-stack over $\S$, in the sense of \cite{Lurie-Thesis}*{Def.~5.1.3}. Furthermore if $\pi$ is flat, so $\inner{\Map}_\S(\X,\Y)$ is an algebraic stack, then $\inner{\Map}_\S(\X,\Y) \to \S$ has quasi-affine diagonal, and it has affine diagonal if $\Y \to \S$ has affine diagonal.
\end{thm}

\begin{rem}
In fact our proof uses only the two apparently weaker properties: $\pi$ satisfies property \LL below, and for any complete \emph{local} noetherian classical ring $R$ over $\S$, the stack $\X \times_\S \Spec(R)$ is complete along the closed substack defined by the maximal ideal of $R$.
\end{rem}

\begin{rem}[Classical versus derived mapping stacks]\label{rem:classical_vs_derived}
There is a slightly stronger form of Tannaka duality available in the classical setting \cite{hall2014coherent}, which allows one to deduce the algebraicity of $\Map_\S(\X,\Y)$ when $\pi : \X \to \S$ is a flat map of classical stacks satisfying the hypotheses of \Cref{thm:derived_mapping_stacks} and when $\Y$ merely has affine stabilizers over $\S$ as in \cite{hall2014coherent}. In addition, when $\pi$ is flat, the algebraicity of the classical mapping stack is equivalent to the algebraicity of the derived mapping stack by \cite{HAG-II}*{Theorem C.0.9} and \Cref{prop:mapping_cotangent_complex} below, because $\Map_\S(\X,\Y)^{\rm cl} \simeq \Map_{\S^{\rm cl}}(\X^{\rm cl},\Y^{\rm cl})$. So by using classical Artin's criteria instead of the derived criteria, one could analogously strengthen \Cref{thm:derived_mapping_stacks} when $\pi$ is flat.
\end{rem}

\begin{rem}[Noetherian hypotheses]\label{rem:noetherian}
Also in the classical setting, when $\X \to \S$ is proper one can remove the noetherian hypotheses on $\S$ using relative noetherian approximation. We do not carry this out here, because we do not know that a general formally proper map $\X \to \Spec(A)$ admits a formally proper model over a finitely generated $\bZ$-algebra (even in the classical context). However, if $\S$ is a stack satisfying the assumptions of \Cref{thm:derived_mapping_stacks} and $\pi : \X \to \S$ is a flat and formally proper morphism, then for any derived stack $\S'$ with a map $\S' \to \S$ and a map of derived stacks $\Y \to \S'$ which is relatively algebraic and locally almost of finite presentation with quasi-affine diagonal, one can apply Noetherian approximation to the underlying classical stack of $\Y$ to deduce that $\Map_{\S'}(\X\times_\S \S', \Y)$ is algebraic and locally almost of finite presentation with quasi-affine diagonal over $\S'$.
\end{rem}

Let us recall the derived version of Artin's representability criteria introduced by Lurie \cite{Lurie-Thesis} (see also \cite{Pridham_artin}). For a functor $\sF : \SCR_A \to \Sp$ which satisfies \'{e}tale descent, the Artin-Lurie criteria are:
\begin{enumerate}
\item \emph{locally almost of finite presentation}: $\sF$ commutes with filtered colimits of $k$-truncated objects in $\SCR_A$, for any $k$ \\
\item \emph{$n$-truncated}: $\sF(R)$ is $n$-truncated for any discrete $R\in \SCR_A$\\
\item \emph{admits a co-representable deformation theory}:
\begin{enumerate}
\item \emph{admits a $(-n)$-connective cotangent complex}: there is an $L_\sF \in \QC(\X)_{\geq -n}$ such that for any $R \in \SCR_A$ and $\eta : \Spec(R) \to \sF$ corresponding to a point in $\sF(R)$,
$$\sF(R \oplus M) \times_{\sF(R)} \{\eta\} \simeq \Omega^\infty R\Hom_R (\eta^\ast L_\sF, M)$$
The existence of $L_\sF$ is not automatic, but it is defined up to canonical isomorphism when it exists. \\
\item \emph{infinitesimally cohesive} and \emph{nilcomplete}: See \Cref{lem:descent-nil} conditions (i) and (ii) \\
\end{enumerate}
\item \emph{integrable}: for any discrete complete local noetherian $R$ over $A$,  $\sF(R) \to \Map(\Spf(R),\sF)$ is an equivalence. \\
\end{enumerate}

The main result of \cite{Lurie-Thesis} is the following:\footnote{A final version of this theorem in the spectral setting will also appear in \cite{SAG}.}
\begin{thm} \label{thm:derived_artin}
Let $A$ be a derived $G$-ring and let $\sF \in \Fun(\SCR_A,\S)$ be a functor which satisfies \'{e}tale descent. Then $\sF$ is a derived algebraic $n$-stack locally almost of finite presentation over $\Spec(A)$ if and only if the Artin-Lurie criteria (1)-(4) above hold.
\end{thm}

Our proof of \Cref{thm:derived_mapping_stacks} closely follows the proof of the analogous result in the case where $\X$ is a flat and proper algebraic space in \cites{Lurie-Thesis, DAG-XIV}. Namely, we will apply \Cref{thm:derived_artin} after first establishing (3) and (4), whose proofs require some elaboration.

\subsubsection{Left adjoint for the pullback functor}
\label{ssec:LL}

For a flat and proper morphism of schemes $\pi:X \to S$, the pullback functor $\pi^\ast:D_{qc}(S) \to D_{qc}(X)$ admits a left adjoint $\pi_+$ with $\pi_+(F) \simeq (\pi_\ast(F^\dual))^\dual$ for perfect complexes, and defined in general by writing any $F$ as a filtered colimit of perfect complexes. In fact, for a morphism of perfect stacks (in the sense of \cite{BFN}), the existence of $f_+$ is equivalent to the property that $f_\ast$ preserves perfect complexes, and the same formula for $f_+$ applies. More generally we have:

\begin{prop} \label{prop:property_LL_from_CP}
Let $\X$ and $\S$ be locally noetherian algebraic derived stacks, and let $\pi : \X \to \S$ be a qc.qs. morphism of Tor amplitude $d < \infty$ satisfying \CP. Assume that locally $\S$ admits a dualizing complex. Then $\pi^\ast : \QC(\S) \to \QC(\X)$ admits a left adjoint, $\pi_+$, which maps $\QC(\X)_{\geq 0}$ to $\QC(\X)_{\geq -d}$ and preserves almost perfect complexes.
\end{prop}

The key is to reduce the claim to the existence of an adjoint on small categories.

\begin{lem} \label{L:small_category_left_adjoint}
Let $\pi : \X \to \S$ be a morphism of Tor amplitude $d$ between noetherian algebraic derived stacks. Then $\pi^\ast : \QC(\S) \to \QC(\X)$ admits a left adjoint if and only if $\pi^\ast : \APerf(\S) \to \APerf(\X)$ admits a left adjoint.
\end{lem}

\begin{proof}
If $\pi_+$ exists, then for and $F \in \APerf(\X)$, $\RHom_\S(\pi_+(F),-) \simeq \RHom_\X(F,\pi^\ast (-))$ commutes with filtered colimits in $\QC(\S)_{\leq n}$, because $\pi^\ast : \QC(\S)_{\leq n} \to \QC(\X)_{\leq d+n}$ commutes with filtered colimits. It follows that $\pi_+(F) \in \APerf(\S)$, and thus $\pi_+$ is a left adjoint to $\pi^\ast : \APerf(\S) \to \APerf(\X)$.

Conversely, assume that $\pi^\ast : \APerf(\S) \to \APerf(\X)$ admits a left adjoint. Let $\cC \subset \QC(\X)$ be the full subcategory of objects such that $\Map(F,\pi^\ast(-))$ is corepresentable in $\QC(\S)$. Because $\pi$ has finite Tor amplitude, $\Map(F,\pi^\ast(-))$ commutes with the limit $M = \ilim \tau_{\leq n} M$ for any $M\in \QC(\S)$. It follows that $F \in \cC$ if and only if $\exists G \in \QC(\S)$ such that for all $n$,
\begin{equation} \label{E:criterion}
\Map(\tau_{\leq n+d}(F),\pi^\ast(-)) \simeq \Map(\tau_{\leq n}(G),-) \text{ as functors on } \QC(\S)_{\leq n}.
\end{equation}

For any $F \in \APerf(\X)$ and $n \in \bZ$, \Cref{thm:coh-gen} implies that $\Map(F,\pi^\ast(-)) \simeq \Map(\pi_+(F),-)$ as functors on $\QC(\S)_{\leq n}$, because they agree on $\DCoh(\S)_{\leq n}$ and both commute with filtered colimits. So the criterion \eqref{E:criterion} implies that $\APerf(F) \subset \cC$. $\cC$ is closed under colimits and contains $\DCoh(\X)_{\leq n}$ for all $n$, so \Cref{thm:coh-gen} implies that $\QC(\X)_{\leq n} \subset \cC$ for all $n$.

Finally, given a tower $\cdots \to F_2 \to F_1 \to F_0$ in $\cC$ such that $\tau_{\leq k}(F_i)$ is eventually constant for any $k$ as $i \to \infty$, the corresponding tower $\cdots \to G_2 \to G_1 \to G_0$ of corepresenting objects $G_i = \pi_+(F_i)$ has the same property because $\pi^\ast$ has finite Tor amplitude. Then $G:= \lim_n G_n \in \QC(\S)$ satisfies the criterion \eqref{E:criterion} for $F = \lim_n F_n$. Writing any $F\in \QC(\S)$ as $F \simeq \lim_n \tau_{\leq n}(F)$ shows that $\cC = \QC(\S)$.

\end{proof}

\begin{lem}[``Base-change for $p_+$'']\label{lem:base_change_left_adjoint} Suppose we are given a cartesian square of prestacks
  \[ \xymatrix{
  \X' \ar[d]_{p'} \ar[r]^{q'} & \X \ar[d]^p \\
\S' \ar[r]_q & \S
  } \]
such that $q$ is relatively representable by qc.qs. algebraic derived stacks satisfying \CD. Assume that $p^\ast : \QC(\S) \to \QC(\X)$ admits a left adjoint $p_+$ and likewise for $(p^\prime)^\ast$. Then the canonical base change morphism is an isomorphism of functors
\[ (p')_+ (q')^*  \xrightarrow{\simeq} q^* p_+  \]
\end{lem}
\begin{proof} The base-change isomorphism between these functors is induced by the base-change isomorphism of their right adjoints, $p^* q_* \xrightarrow{\simeq}  (q')_* (p')^*$ provided by \Cref{prop:push-CD}.
\end{proof}

\begin{proof}[Proof of \Cref{prop:property_LL_from_CP}]
Let $U_\bullet \to \S$ be an fppf hypercover of $\S$ by derived schemes, and let $\X_\bullet \to \X$ be the base change along $\pi : \X \to \S$. If for each $n$, the functor $\pi_n^\ast : \QC(U_n) \to \QC(\X_n)$ admits a left adjoint $(\pi_n)_+$, then \Cref{lem:base_change_left_adjoint} implies that applying the functors $(\pi_n)_+$ level-wise defines a functor $(\pi_\bullet)_+ : \Tot \{\QC(\X_\bullet)\} \to \Tot\{ \QC(U_\bullet)\}$. It is straightforward to check that $(\pi_\bullet)_+$ is left adjoint to $(\pi_\bullet)^\ast$, which corresponds to $\pi^\ast$ under the equivalences $\QC(\X) \simeq \Tot\{ \QC(\X_\bullet)\}$ and $\QC(\S) \simeq \Tot \{\QC(U_\bullet)\}$. We may assume that $U_n$ is a union of affine schemes, for which the existence of a left adjoint $(\pi_n)_+$ is equivalent to the existence of a left adjoint for $(\pi_n)^\ast$ over each of the affine schemes comprising $U_n$. Furthermore if $\S$ locally admits a dualizing complex, then we can arrange that each affine scheme appearing in $U_n$ admits a dualizing complex by \cite{Lurie-Thesis}*{Thm.~3.6.8}(see also \cite{DAG-XIV}*{Thm.~4.3.14}). Thus we have reduced to the case where $\S = \Spec(R)$ for some noetherian simplicial commutative ring $R$ which admits a dualizing complex.

By \Cref{L:small_category_left_adjoint} it suffices to show that the functor $h(\bullet) = \Omega^\infty \RHom_\X(F, \pi^\ast (-))$ is corepresentable for any $F \in \APerf(\X)$. We use \cite{DAG-XIV}*{Thm.~4.4.2} which gives five conditions which guarantee that $h(\bullet)$ is corepresentable by an almost perfect complex:
\begin{enumerate}
\item That $h(0)$ is contractible, which is immediate, and $h$ maps pushout squares to pullback squares as $\pi^\ast$ is an exact functor of stable $\infty$-categories and $\RHom_\X(F,\bullet)$ takes pushout squares to pullback squares.
\item The canonical map $h(M) \mapsto \ilim h(\tau_{\leq n} M)$ is an equivalence -- because $\pi$ has finite Tor dimension, $\pi^\ast \ilim \tau_{\leq n} M \simeq \ilim \pi^\ast \tau_{\leq n} M$, and $\RHom(F,\bullet)$ preserves limits.
\item $h$ commutes with filtered colimits in $\QC(R)^{cn}_{\leq n}$ -- the functor $\pi^\ast$ commutes with colimits and maps $\QC(R)^{cn}_{\leq n}$ to $\QC(\X)^{cn}_{\leq n+d}$ for some $d$, and $\RHom(F,\bullet)$ commutes with filtered colimits in $\QC(\X)^{cn}_{\leq n+d}$ because $F$ is almost perfect.
\item There exists an integer $n\geq 0$ such that $h(M)$ is $n$-truncated for every discrete $R$-module, $M$ -- $\pi^\ast (\QC(R)^\heart) \subset \QC(\X)^{cn}_{\leq d}$, so $H_i(\RHom(F,\pi^\ast(M))) = 0$ for $i>d-\min \{j | H_j(F) \neq 0\}$.
\end{enumerate}
The fifth condition requires that for any coherent $R$-module, $M$, the module $\pi_0 h(M)$ is finitely generated as a module over $\pi_0 R$. We rewrite $h(M) \simeq \Omega^\infty \pi_\ast H$, where $H:= \RHom^{\otimes_{\QC(\X)}}_\X(F,\pi^\ast M) \in \DDAPerf(\X)$ by \Cref{lem:bounded_inner_hom}. In particular $\pi_\ast H \in \DDAPerf(R)$ by property \CP, and $H_i \pi_\ast H$ is a coherent $\pi_0 R$-module for all $i$.
\end{proof}

\subsubsection{Cotangent complex of the mapping stack}
\label{sect:Artin_deformation_thy}

\begin{defn}\label{property:L}
Let $\X$ be a stack over a noetherian affine derived scheme, $\Spec(R)$. We introduce the property
\smallskip
\begin{itemize}[leftmargin=2cm]
\item[$(L)_R$:] The pullback functor $f^\ast : \QC(\Spec(R)) \to \QC(\X)$ admits a left adjoint $f_+$.
\end{itemize}
\smallskip
We say that a morphism of stacks $f : \X \to \S$ satisfies \LL if for any noetherian affine derived scheme $\Spec(R) \to \S$, the base change $\X \times_\S \Spec(R) \to \Spec(R)$ satisfies \LL[R].
\end{defn}

\begin{prop} \label{prop:mapping_cotangent_complex}
Let $S$ be an affine derived scheme, let $\X$ and $\Y$ be derived $S$-stacks such that $\pi : \X \to S$ satisfies \LL and has finite Tor amplitude. If $\Y$ admits a cotangent complex then so does $\inner{\Map}_S( \X,\Y)$. For any map $\eta : \Spec(A) \to \inner{\Map}_S(\X,\Y)$ classifying a map $f : \X_A \to \Y$ over $S$, we have
\[
\eta^\ast (L_{\inner{\Map}_S( \X,\Y) / S}) \simeq (\pi_A)_+ f^\ast L_{\Y/S}.
\]
\end{prop}

First we observe that the formation of the split square-zero extension $A \oplus M$ is compatible with pullback in the sense that for any map of rings $A \to B$, corresponding to a map $\Spec(B) \to \Spec(A)$, we have $B \otimes_A (A \oplus M) \simeq B \oplus (B \otimes_A M)$. This motivates the definition of the trivial square-zero extension $\X[F]$ for any functor $\X$ and any $F \in \QC(\X)^{cn}$ by the fiber square
\begin{equation} \label{eqn:def_trivial_sq_zero_ext}
\xymatrix{\Spec(A \oplus a^\ast F) \ar[r] \ar[d] & \X[F] \ar[d] \\ \Spec(A) \ar[r]^a & \X}.
\end{equation}
In other words
$$\X[F](A) := \left\{ a \in \X(A) \text{ and a section of } \Spec(A \oplus a^\ast F) \to \Spec(A) \right\},$$
where the space of sections of $\Spec(A \oplus a^\ast F) \to \Spec(A)$ can further be identified with $\Omega^\infty R \Hom_A (L_A, a^\ast F)$.

\begin{lem} \label{lem:def_of_maps}
Let $S = \Spec(A)$, and let $\X,\Y : \SCR_{A} \to \Sp$ be functors such that $\Y$ admits a cotangent complex. Then for any map $f : \X \to \Y$ over $S$ there is a canonical isomorphism
$$\Map_S(\X[F],\Y) \times_{\Map_S(\X,\Y)} \{f\} \simeq \Omega^\infty R\Hom_\X(f^\ast L_{\Y/S}, F)$$
as functors $\QC(\X)^{cn} \to \Sp$
\end{lem}

\begin{proof}
Let $\op{Aff} / \X[F]$ denote the $\infty$-category of affine schemes along with a morphism to $\X[F]$. By the $\infty$-categorical Yoneda lemma, we have
$$\X[F] = \op{colim} \limits_{T \in \op{Aff} / \X[F]} T$$
By the canonical fiber square \eqref{eqn:def_trivial_sq_zero_ext}, we have a functor $\op{Aff} / \X \to \op{Aff} / \X[F]$ mapping $T \mapsto T \times_{\X} \X[F] \simeq T[F|_T]$. This functor is cofinal because any morphism $T \to \X[F]$ factors canonically through $T \times_\X \X[F] \to \X[F]$, and this factorization is initial in the category of factorizations $T \to T^\prime [F|_{T^\prime}] \to \X[F]$ for varying $T^\prime$. Thus we can write $\X[F]$ as a colimit over $\op{Aff} / \X$
$$\X[F] = \op{colim} \limits_{\eta : T \to \X} T[\eta^\ast F]$$
Hence on mapping spaces of presheaves we have
\begin{gather*}
\Map_S(\X[F],\Y) = \lim \limits_{\substack{(\op{Aff}/\X)^{op} \\ \eta : T \to \X }} \Y(T[\eta^\ast F]), \text{ whereas}\\
\Map_S(\X,\Y) = \lim \limits_{\substack{(\op{Aff}/\X)^{op} \\ \eta : T \to \X }} \Y(T)
\end{gather*}
Taking fibers commutes with limits, so 
$$\Map_S(\X[F],\Y) \times_{\Map_S(\X,\Y)} \{f\} \simeq \lim \limits_{\substack{(\op{Aff}/\X)^{op} \\ \eta : T \to \X }} \Omega^\infty R\Hom_T(\eta^\ast f^\ast L_{\Y/S}, \eta^\ast F)$$
Where we have used the defining property of $L_{\Y / S}$ as corepresenting the fiber of the map $\Y(T[-]) \to \Y(T)$ for affine schemes $T$. This last expression is essentially the definition of $\Omega^\infty R\Hom_{\QC(\X)}(f^\ast L_{\Y / S},F)$.
\end{proof}

\begin{proof} [Proof of \Cref{prop:mapping_cotangent_complex}]
Let $\eta\in \fM(A)$ correspond to an affine derived scheme $\Spec(A)$ over $S$, together with a map $f : \X_A \to \Y$ over $S$.  Let $M \in \QC(A)^{cn}$. Then by definition $\fM(A \oplus M) = \Map_S(\X_{A \oplus M} , \Y)$. If $\pi_A : \X_A \to \Spec(A)$ is the structure morphism, then $\X_{A \oplus M} \simeq \X_A[\pi_A^\ast M]$ over $\Spec(A)$, by the construction of the trivial square-zero extension of functors. Hence by Lemma \ref{lem:def_of_maps} we have a canonical isomorphism
$$\fM(A \oplus M) \times_{\fM(A)} \{f\} \simeq \Omega^\infty R\Hom(f^\ast L_{\Y / S}, \pi_A^\ast M)$$
By hypothesis \hyperref[property:L]{(L)}, the functor $\pi_A^\ast$ has a left adjoint $(\pi_A)_+$, hence we can define $L_{\fM/S}|_{\Spec(A)} := (\pi_A)_+ f^\ast L_{\Y/S}$. For any map of rings $\phi : A \to B$ we have the pullback square
$$\xymatrix{\X_B \ar[d] \ar[r]^{\phi^\prime} & \X_A \ar[r]^f \ar[d] & \Y \\ \Spec(B) \ar[r]^\phi & \Spec(A) & }$$
By \Cref{lem:base_change_left_adjoint}, we have a natural isomorphism $\phi^\ast  (\pi_A)_+ f^\ast L_{\Y/S} \simeq (\pi_B)_+ (\phi^\prime \circ f)^\ast L_{\Y/S}$. Hence the assignment $\Spec(A) / \fM \mapsto (\pi_A)_+ f^\ast L_{\Y/S}$ determines an object of $L_{\fM / S} \in QC(\fM)$.

\end{proof}

\subsubsection{Integrability via the Tannakian formalism}
\label{sect:integrability}

We recall the following version of Tannaka duality in the setting of spectral algebraic geometry, which is a refinement of Lurie's Tannaka duality theorem \cite{DAG-VIII}*{Theorem 3.4.2}:
\begin{thm}[Theorem 5.1 and Lemma 3.13 of \cite{BhattHL}] \label{thm:Tannaka}
Let $\Y$ be a noetherian algebraic spectral stack with quasi-affine diagonal. Then for any prestack $\S$ over $\CAlg^{cn}$, the association $f \mapsto f^\ast$ gives an equivalence of $\infty$-categories
\[
\op{Map}(\S,\Y) \to \Fun_{\otimes}^c(\APerf(\Y)^{cn},\APerf(\S)^{cn}),
\]
where the latter denotes symmetric monoidal functors of symmetric monoidal $\infty$-categories which preserve finite colimits.
\end{thm}

Integrability of the mapping stack functor will follow from the slightly more general fact:

\begin{prop} \label{prop:integrability}
Let $\X$ be a noetherian algebraic derived stack over a noetherian affine derived scheme $S$, and let $\oh{\X}$ be the formal completion of $\X$ along a closed substack. If $\APerf(\X) \to \APerf(\oh{\X})$ is an equivalence of $\infty$-categories, then for any locally noetherian algebraic derived $S$-stack $\Y$ with quasi-affine diagonal, the canonical functor
$$\op{Map}_S ( \X, \Y) \to \op{Map}_S ( \oh{\X} , \Y)$$
is an equivalence of $\infty$-groupoids.
\end{prop}

\begin{proof}
If we write $\Y$ as a union of quasi-compact open substacks $\Y = \bigcup_\alpha \Y_\alpha$, then both mapping $\infty$-groupoids are filtered unions of the mapping groupoids into $\Y_\alpha$, so it suffices to replace $\Y$ with $\Y_\alpha$ and assume that $\Y$ is noetherian. Also note that it suffices to prove the claim for both $\Map(\X,\Y)$ and $\Map(\X,S)$, so we may restrict to just the absolute case.

\medskip
\noindent \textit{Step 1: The spectral version:}
\medskip

Consider the variant of the statement of \Cref{prop:integrability} in which the phrase ``derived stack'' is replaced with ``spectral stack.'' Then the restriction functor $\APerf(\X) \to \APerf(\oh{\X})$ is a $t$-exact equivalence and therefore induces an equivalence of symmetric monoidal $\infty$-categories $\APerf(\X)^{cn} \simeq \APerf(\oh{\X})^{cn}$. So \Cref{thm:Tannaka} immediately implies that $\Map(\X,\Y) \to \Map(\oh{\X},\Y)$ is an equivalence of $\infty$-categories.

\medskip
\noindent \textit{Step 2: The classical version:}
\medskip

The $\infty$-category $\op{Shv}(\op{Ring}^{op})$ of \'etale sheaves on the category of classical affine schemes admits two fully faithful left Kan extension functors
\[
\xymatrix{\op{Shv}(\SCR^{op}) & \op{Shv}(\op{Ring}^{op}) \ar[r]^{LK_{E_\infty}} \ar[l]_{LK_{\SCR}} & \op{Shv}(\CAlg^{cn,op})}.
\]
These functors commute with the formation of $\APerf(-)$, the underlying topological space, and formal completions -- indeed, these claims reduce immediately to the case of representable functors (i.e. affine schemes) where they are evident. The left Kan extension functors also preserve Noetherian algebraic stacks. Finally, a derived stack or spectral stack is quasi-affine if and only if its underlying classical stack is quasi-affine, and it follows that a stack has quasi-affine quasi-affine diagonal if and only if its left Kan extension does.

In particular, if $\X$ and $\Y$ are classical stacks, and $\X^{\rm sp}$ and $\Y^{\rm sp}$ the associated spectral stacks, then the condition that $\APerf(\X) \to \APerf(\oh{\X})$ is an equivalence implies that $\APerf(\X^{\rm sp}) \to \APerf(\oh{\X^{\rm sp}})$ is an equivalence, so the fact that $$\Map_{\op{Shv}(\op{Ring}^{op})}(\X,\Y) \to \Map_{\op{Shv}(\op{Ring}^{op})}(\oh{\X},\Y)$$ is an equivalence follows from the fully-faithfulness of the left Kan extension.

\medskip
\noindent \textit{Step 3: The derived version:}
\medskip

Now let $\X^{\circ}$ and $\Y$ be noetherian algebraic derived stacks, and assume $\Y$ has quasi-affine diagonal, $\X^{\circ}$ is the left Kan extension $LK_{\SCR}$ of a classical algebraic stack, and $\APerf(\X^{\circ}) \to \APerf(\oh{\X^\circ})$ is an equivalence. We will show that for any algebraic derived stack $\X$ admitting a surjective closed immersion $\X^\circ \to \X$, $\Map(\X,\Y) \to \Map(\oh{\X},\Y)$ is an equivalence. This will complete the proof, because any $\X$ satisfying the hypotheses of the proposition admits a canonical surjective closed immersion from $\X^\circ = \X^{\rm cl}$, and $\APerf(\X^\circ) \to \APerf(\oh{\X^\circ})$ is an equivalence by \Cref{lem:completeness_nil}.

Let $\cC \subset \op{Shv}(\SCR^{op})_{\X^{\circ}/}$ be the largest full subcategory consisting of algebraic derived stacks $\X$  admitting a surjective closed immersion $\X^{\circ} \to \X$, and let $\cC' \subset \cC$ be the full subcategory of stacks such that $\Map(\X,\Y) \to \Map(\oh{\X},\Y)$ is an equivalence. Note that \Cref{lem:completeness_nil} implies that $\APerf(\X) \to \APerf(\oh{\X})$ is an equivalence for any $\X \in \cC$.

For the initial object $\X^{\circ} \in \cC$, $\oh{\X^{\circ}}$ is the left Kan extension the completion of the underlying classical stack $\X^\circ|_{\op{Ring}^{op}}$, so it suffices by adjunction to replace $\Y$ with $\Y^{\rm cl}$. Note also that we have $\APerf(\X^\circ|_{\op{Ring}^{op}}) \to \APerf(\oh{\X^\circ|_{\op{Ring}^{op}}})$ is an equivalence of $\infty$-categories, so $\X^{\circ} \in \cC'$ by Step (2).

If $\X \in \cC'$ and $F \in \APerf(\X)^{cn}$, then we claim that $\X[F] \in \cC'$ as well. Indeed, it suffices to show that the map $\Map(\X[F],\Y) \to \Map(\oh{\X[F]},\Y)$ induces an equivalence on the fibers over each point $\{f\} \in \Map(\X,\Y) \simeq \Map(\oh{\X},\Y)$. Given a map $f:\X \to \Y$, \Cref{lem:def_of_maps} identifies the map of fibers over $\{f\}$ with the restriction map
\[
\Omega^\infty \RHom_{\X}(f^\ast(\bL_\Y),F) \to \Omega^\infty \RHom_{\oh{\X}}(\oh{f}^\ast(\bL_\Y),F|_{\oh{\X}})
\]
where $\oh{f}$ denotes the composition of $f$ with the inclusion $\oh{\X} \to \X$. This restriction map is an equivalence because $\X \in \cC'$.

Now let $\X \in \cC$ be a $k$-truncated derived stack, and let $\X' = \tau_{\leq k-1}(\cX)$ be the $k-1$-truncation of $\X$. Recall that in the affine case, if $A \in \SCR$ is $k$-truncated, $A' = \tau_{\leq k-1}(A)$, and $M = \pi_k(A)[k+1]$, then we have a canonical identification
\[
A = A' \times_{0,A' \oplus M,\eta} A',
\]
where $\eta : A' \to A' \oplus M$ is induced by the fiber sequence of $A$-modules $\pi_k(A)[k] \to A \to A'$. The formation of this fiber square commutes with smooth extensions of $A$, so applying it to a smooth hypercover of $\X$ by affine derived schemes gives a canonical commutative diagram of derived stacks realizing $\X$ as a square-zero extension of $\X'$:
\begin{equation}
\xymatrix{ \X'[\pi_k(\cO_\X)[k+1]] \ar[r]^-0 \ar[d]^\eta & \X' \ar[d] \\ \X' \ar[r] & \X}.
\end{equation}

We claim that this diagram, as well as its formal completion, is a pushout square in the $\infty$-category of stacks which are hypersheaves for the smooth topology and are infinitesimally cohesive. Indeed, by hyperdescent it suffices to prove this claim after base change along a smooth map $\Spec(A) \to \X$ (or $\Spec(A) \to \oh{\X}$), at which point this follows immediately from the definition of infinitesimally cohesive. Therefore if $\X' \in \cC'$, we have already shown $\X'[\pi_k(\cO_\X)[k+1]] \in \cC'$, and hence $\X \in \cC'$ by the above pushout square.

By induction we now see that any $k$-truncated $\X \in \cC$ also lies in $\cC'$. Finally, an arbitrary $\X \in \cC$ is the colimit of the truncations $\X^\circ \to \tau_{\leq 1}(\X) \to \tau_{\leq 2}(\X) \to \cdots$ in the category of hypersheaves for the smooth topology which are nilcomplete, and the same is true for the formal completions. It follows that any $\X \in \cC$ lies in $\cC'$.

\end{proof}

\subsubsection{Analysis of the Weil restriction}

\begin{prop} \label{P:weil_restrict}
Let $\pi : \X \to \S$ be a flat morphism of algebraic derived stacks satisfying property \LL, with $\S$ noetherian, and let $\sF \to \X$ be a quasi-affine (resp. affine) morphism. Then the Weil restriction
\[
\pi_\ast(\sF/\X) := \inner{\Map}_\S(\X,\sF) \times_{\inner{\Map}_\S(\X,\X)} \{\id\}
\]
is a quasi-affine (resp. affine) stack over $\S$.
\end{prop}

Before establishing this, let us note the following
\begin{prop} \label{P:left_adjoint_algebras}
Let $\pi : \X \to \S$ be a flat morphism satisfying property \LL between qc.qs. algebraic derived stacks. Then the pullback functor $\pi^\ast : \CAlg(\QC(\S)^\heart) \to \CAlg(\QC(\X)^\heart)$ admits a left adjoint $\pi_\dagger$.
\end{prop}

\begin{proof}
The proof of \cite{DAG-XIV}*{Prop.~3.3.2} can be modified to this context: any quasi-coherent sheaf of algebras $\cA \in \CAlg(\QC(\X)^\heart)$ is a colimit (in fact a coequalizer) of free algebras $\op{Sym}_\X(E)$ for some $E \in \QC(\X)^{\heart}$, so it suffices to show that $\pi_\dagger(\op{Sym}_\X(E)$ exists. Note $\pi^\ast$ admits a left adjoint $\pi_+ : \QC(\X) \to \QC(\S)$ which preserves connective complexes because $\pi$ is flat -- this follows from the case where $\S$ is affine using \Cref{lem:base_change_left_adjoint} as in the first paragraph of the proof of \Cref{prop:property_LL_from_CP}. Then the pullback functor $\pi^\ast : \QC(\S)^\heart \to \QC(\X)^\heart$ admits a left adjoint $H_0(\pi_+(-))$, and one can check that $\pi_\dagger(\op{Sym}_\X(E)) \simeq \op{Sym}_\S(H_0(\pi_+(E)))$ using the universal property of a free algebra. 
\end{proof}

\begin{proof}[Proof of \Cref{P:weil_restrict}]
The claim is local for the smooth topology, so we may assume that $\S = \Spec(A)$ is affine. As we will see in the proof of \Cref{thm:derived_mapping_stacks} below, the functor $\pi_\ast(\sF/\X)$ is infinitesimally cohesive, nilcomplete, and admits a cotangent complex, so by \cite{HAG-II}*{Thm.~C.0.9} it suffices to check algebraicity of its restriction to the full subcategory $\op{Ring}_{\pi_0(A)} \subset \SCR_A$ of discrete objects, and restricting to $\op{Ring}_{\pi_0(A)}$ also suffices to check that that it is affine or quasi-affine. For any $R \in \SCR_A$, we have
\[
\pi_\ast(\sF/\X)(R) \simeq \Map_\X(\X \times_S \Spec(R),\sF).
\]
Because $\pi$ is flat, $\X \times_S \Spec(R)$ is classical for any discrete $R$, so the canonical map of functors $\pi_\ast(\sF^{\rm cl}/\cX) \to \pi_\ast(\cF/\cX)$ is an isomorphism after restricting to $\op{Ring}_{\pi_0(A)}$.

So we have reduced to the analogous claim when $\X,S,$ and $\sF$ are classical stacks, and we work in the category of classical stacks for the remainder of the proof. If $\sF \simeq \Spec_\X(\cA)$ for some quasi-coherent sheaf of algebras over $\X$, then one can check directly from the functor of points that $\pi_\ast(\sF/\X) \simeq \Spec_S(\pi_\dagger(\cA))$, where $\pi_\dagger : \CAlg(\QC(\X)^\heart) \to \CAlg(\QC(\S)^\heart)$ is the left adjoint to $\pi^\ast$ provided by \Cref{P:left_adjoint_algebras}.

If $\sF$ is quasi-affine, we have an open immersion $\sF \subset \Y = \Spec_\X(\cA)$, and the resulting map $\pi_\ast(\sF/\X) \to \pi_\ast(\Y/\X)$ is a monomorphism. Given a map $\Spec(R) \to \pi_\ast(\Y/\X)$, classifying a map $f : \X_R \to \Y$ over $\X$, a composition $\Spec(R') \to \Spec(R) \to \pi_\ast(\Y/\X)$ lifts (uniquely) to $\pi_\ast(\sF/\X)$ if and only if the canonical map $\X_{R'} \to \X_{R}$ factors through the open substack $f^{-1}(\sF) \subset \X_R$. Thus $\Spec(R) \times_{\pi_\ast(\Y/\X)} \pi_\ast(\sF/\X)$ is the subfunctor classified by the complement of the image of $\X_R \setminus f^{-1}(\sF)$ under $\X_R \to \Spec(R)$, which is an open subscheme by \Cref{P:universally_closed}.

\end{proof}

\subsubsection{Proof of \Cref{thm:derived_mapping_stacks}}

By \Cref{thm:derived_artin} it suffices to check that $\sM := \inner{\Map}_\S(\X, \Y)$ in the slice category $\Fun(\SCR,\widehat{\Sp})/\S$ is a derived stack which satisfies the Artin-Lurie criteria. The proof of the second claim in \cite{DAG-XIV}*{Proposition 3.3.5} applies verbatim in the context of simplicial commutative algebras to show that $\sM$ is a sheaf for the smooth topology. So to verify that $\sM \to \S$ is representable by derived algebraic stacks, we may restrict to the situation where $\S = S = \Spec(A)$ for a derived $G$-ring $A$ which admits a dualizing complex.

Realize $\X$ as a colimit of a simplicial diagram $X_\bullet$, where each $X_i = \Spec(R_i)$ is an affine derived scheme which is smooth over $\X$. Because $\Y$ satisfies smooth descent, $\inner{\Map}_S(\X,\Y) = \Tot \{\inner{\Map}_S(X_\bullet,\Y) \}$ in the $\infty$-category $\Fun(\SCR_A, \Sp)$. The subcategory of infinitesimally cohesive and nilcomplete functors is closed under small limits (\cite{DAG-XIV}*{Remark 2.1.11}), so it suffices to prove the claim for $X_i$ which is \cite{DAG-XIV}*{Proposition 3.3.6(2,3)}, whose proof applies verbatim for stacks over $\SCR$.

For a cosimplicial space $X^\bullet$ which is level-wise $k$-truncated, the map $\Tot \{X^\bullet\} \to \Tot_{k+1} \{X^\bullet\}$ is $(-1)$-truncated, which implies that $\Tot \{ - \}$ commutes with filtered colimits of cosimplicial spaces which are level-wise $k$-truncated for some fixed $k$. For any $A' \in \SCR_A$, we have
\[
\sM(A') = \Tot \{ \Y(A' \otimes_A R_\bullet)\}.
\]
If $A'$ is $k$-truncated, then $A' \otimes_A R_i$ is level-wise $k+d$-truncated, where $d$ is the Tor amplitude of the maps $A \to R_i$, and $\Y(A' \otimes_A R_i)$ is $n+d+k$-truncated by \cite{Lurie-Thesis}*{Cor.~5.3.8}. It follows that if $\Y(-)$ commutes with filtered colimits of $k$-truncated objects for any $k$, then so does $\sM(A')$. These observations also show that $\sM(A')$ is $n+d$-truncated when $A'$ is discrete.

We have verified the Artin-Lurie criteria (1),(2),(3b), so it remains to show (3a) and (4), but we have verified these above: \Cref{prop:mapping_cotangent_complex} and \Cref{prop:property_LL_from_CP} imply that $\sM$ admits a $(-1-n)$-connective almost perfect cotangent complex. For integrability, consider a discrete complete local noetherian ring $R$ over $A$, and let $\oh{\X}_R$ be the formal completion of $\X_R$ along the closed subset defined by the maximal ideal of $R$.   , \Cref{prop:integrability} applied to the completion of $\X \times_S \Spec(R)$ along the closed substack defined by the maximal ideal $\fm \subset R$ implies the integrability of $\sM$.

\subsection{Algebraicity of the stack of coherent sheaves}

In \cite{Pridham_moduli}, Pridham uses a modified version of the Artin-Lurie criteria above \cite{Pridham_artin} to give a simplified algebraicity criterion for substacks of the stack of quasi-coherent complexes on a derived algebraic stack. We apply this criterion below.

Given an $\infty$-category $\cC$, we let $\cC^{\cong}$ denote the largest $\infty$-subcategory whose homomorphisms are invertible, i.e., the largest Kan subcomplex of the quasi-category $\cC$. This is referred to as the ``core'' and denoted $\cW(-)$ in \cite{Pridham_moduli}.

\begin{defn}\label{def:stack_sheaves}
For a flat and almost finitely presented morphism of algebraic derived stacks $\X \to \Spec(R)$, let $\underline{\Coh}_{\X/R} \colon \SCR_R \to \Sp$ be the functor which maps $A \in \SCR_R$ to the full $\infty$-subgroupoid of $(\APerf(\X_A)^{cn})^{\cong}$ whose objects are flat over $A$.
\end{defn}

\begin{thm}\label{thm:moduli}
Let $R$ be a derived $G$-ring which admits a dualizing complex, and let $\pi \colon \X \to \Spec(R)$ be a flat and formally proper morphism of derived algebraic stacks. Then $\underline{\Coh}_{\X/R}$ is a derived algebraic stack with affine diagonal.
\end{thm}

We first note the following lemma, which identifies $\underline{\Coh}_{\X/R}(A)$ with the full $\infty$-subgroupoid of $(\QC(\X_A)^{cn})^{\cong}$ consisting of complexes whose restriction to $\X_{\pi_0(A)}$ is flat and finitely presented (in the classical sense).

\begin{lem} \label{lem:aperf_thickening}
Let $\pi \colon \X \to \Spec(A)$ be a flat morphism of algebraic spectral stacks which is finitely presented on underlying classical stacks, and let $\pi_0(A) \to A'$ be a surjection of (discrete) rings with nilpotent kernel. Then $E \in \QC(\X)^{cn}$ is flat over $A$ and almost perfect if and only if $E \otimes_{A} A' \in \QC(\X\times_{\Spec(A)} \Spec(A'))^{cn}$ is flat over $A'$ (hence discrete) and classically finitely presented.
\end{lem}
\begin{proof}
The claim is local, so we may assume $\X = \Spec(B)$ for some $E_\infty$-algebra $B$ over $A$, and let $B' := B \otimes_A A'$. By \cite{HigherAlgebra}*{Thm.~7.2.2.15}, $E$ is flat as an $A$-module if and only if $E \otimes_A N$ is discrete whenever $N \in (A\mod)^\heart$. The hypotheses imply $(A\mod)^\heart$ is generated under extensions by objects of the form $i_\ast(M)$ where $i : \Spec(A') \to \Spec(A)$, and the projection formula implies $M \otimes_A i_\ast(-) \simeq i_\ast(i^\ast(M) \otimes_{A'} (-))$, so $M$ is flat if and only if $i^\ast(M) = M \otimes_B B'$ is flat.

Now assume $E \in B\mod$ is flat over $A$, the map of (discrete) rings $A' \to B'$ is finitely presented, and $E \otimes_B B'$ is a flat and finitely presented discrete $B'$-module. By classical noetherian approximation \cite{stacks-project}*{\href{https://stacks.math.columbia.edu/tag/02JO}{Tag 02JO}}, both $B'$ and $E \otimes_B B'$ arise via base change from the same data over a finitely generated subring $A'' \subset A'$, so $E \otimes_B B'$ is almost perfect.

We have already shown in the proof of \Cref{prop:closed-descent-QCcn} that the functor $\APerf(-)$ is nilcomplete (without noetherian hypotheses), so it suffices to assume that $\pi_i(A)=0$ and hence $\pi_i(B)=0$ for $i\gg 0$. Then $B$ can be obtained from $B'$ from a finite sequence of square-zero extensions, so it suffices to assume $B \to B'$ is a square-zero extension. In this case $\Spec(B)$ is obtained as a pushout along two maps $\Spec(B' \oplus M) \to \Spec(B')$ (see \cite{HigherAlgebra}*{Rem.~7.4.1.7}). It follows from \cite{DAG-IX}*{Prop.~7.7} that $E$ is almost perfect if and only if $E \otimes_B B'$ is almost perfect, which completes the proof.

\end{proof}

\begin{proof}

Consider the functor $\sF \colon \SCR_R \to \widehat{\Sp}$ which assigns $A \mapsto \QC(\X_A)^{\cong}$, and the functor $\cM : \op{Ring}_{\pi_0(R)} \to \op{Cat}_\infty$ which assigns
\[
A \mapsto \left\{ \begin{array}{c} \text{finitely presented and } A\text{-flat objects} \\ \text{in }\QC(\X \times_{\Spec(\pi_0(R))} \Spec(A))^\heart \end{array} \right\}.
\]
We note that \Cref{lem:aperf_thickening} implies that $\cM$ is open in the functor $\sF$ in the sense of \cite{Pridham_moduli}*{Def.~3.8}. Also by \Cref{lem:aperf_thickening} the functor $\underline{\Coh}_{\X/R}$ defined above is the full subfunctor of $\sF$ consisting of objects in $\sF(A)$ whose restriction to $\sF(\pi_0(A))$ are weakly equivalent in $\QC(\X_{\pi_0(A)})$ to an object of $\cM(\pi_0(A))$. \cite{Pridham_moduli}*{Thm.~4.12} gives criteria for the resulting functor to be an algebraic derived $1$-stack, which we now verify:

Condition $(0)$ is the fact one can check if a complex in $\QC(\X_A)^{cn}$ is flat and almost perfect \'etale locally over $A$. Condition (1), that for any $A \in \op{Ring}_{\pi_0(R)}$ and $E \in \cM(A)$ the functor $H_i \RHom_{\X_A}(E,E\otimes_A (-))$ commutes with filtered colimits in $(A\mod)^\heart$, is immediate from the fact that $E$ is $A$-flat and almost perfect. Condition (2), that for all finitely generated $A \in \op{Ring}_{\pi_0(R)}$ and all $E \in \cM(A)$, $H_i \RHom_{\X_A}(E,E)$ are finitely generated $A$-modules for all $i$, follows from the fact that $\RHom_{\X_A}^{\otimes_{\QC(\X_A)}}(E,E) \in \DDAPerf(\X_A)$ by \Cref{lem:bounded_inner_hom} and from property \CP[A], which holds by \Cref{T:cp_fully_faithful}.

Condition (3) states that the functor of components of $\cM$ preserves filtered colimits in $\op{Ring}_{\pi_0(R)}$. This holds because by classical noetherian approximation if $A = \colim_\alpha A_\alpha$ is a filtered colimit, then any flat and finitely presented object in $\QC(\X_A)^\heart$ is the pullback of a finitely presented object in some $\QC(\X_{A_\alpha})^\heart$, and by \cite{stacks-project}*{\href{https://stacks.math.columbia.edu/tag/02JO}{Tag 02JO}} one can increase $\alpha$ so that the object in $\QC(\X_{A_\alpha})^\heart$ is flat and finitely presented.

Finally, because $\cM(A)$ is a $1$-category for any $A \in \op{Ring}_{\pi_0(R)}$, i.e., flat and almost perfect objects in $\QC(\X_A)$ have no negative Exts, condition (4) amounts to the claim that for any complete local discrete noetherian $\pi_0(R)$-algebra $A$ with maximal ideal $\mathfrak{m}$, $\cM(A)$ is the homotopy inverse limit of the categories $\cM(A/\mathfrak{m}^k)$. This follows from the definition of a formally proper morphism (\Cref{def:cohomologically_proper}) and the fact that if $E \in \QC(\X_A)^\heart$ is such that $E \otimes_A (A/\mathfrak{m}^k)$ is $(A/\mathfrak{m}^k)$-flat for all $k\geq 1$, then $E$ is flat over $A$.

This completes the proof that $\underline{\Coh}_{\X/R}$ is an algebraic derived $1$-stack locally almost of finite presentation over $R$. To show that $\underline{\Coh}_{\X/R}$ has affine diagonal, it suffices to show this for the underlying classical stack on $\op{Ring}_{\pi_0(R)}$. This stack agrees with the moduli functor $\cM_\sA$ of \cite{existence-moduli}*{Def.~7.8} associated to the locally noetherian $\pi_0(R)$-linear abelian category $\sA = \QC(\X_{\pi_0(R)})^\heart$. By definition $\cM_\sA(A)$ is the groupoid of $A$-flat and finitely presented $A$-module objects in $\sA$. By \cite{existence-moduli}*{Lem.~7.19}, if $\cM_\sA$ is algebraic and locally of finite presentation over $\pi_0(R)$, then it must have affine diagonal.

%
\end{proof}

\appendix


\addtocontents{toc}{\protect\setcounter{tocdepth}{1}}

\section{Quasi-coherent complexes on prestacks}
\label{section:qc}

Suppose that $R_\bullet$ is a simplicial commutative ring. The normalized chain complex $N(R_\bullet)$ obtains the structure of a differential graded algebra via the Eilenberg-Zilber product, and the category of $R_\bullet$-modules, $\C = R_\bullet\mod$, is the stable symmetric monoidal $\infty$-category of left $N(R_\bullet)$-modules in chain complexes (see \cite{HAG-II}*{Sect.~2.2.1}). Furthermore
\begin{enumerate}
    \item $\C_{\geq 0}$ is the unstable $\infty$-category of $N(R_\bullet)$-modules in homologically nonnegative degrees -- i.e., Dold-Kan provides an equivalence of $\C_{\geq 0}$ with the $\infty$-category of simplicial $R$-modules;
    \item $\C$ is equivalent the the $\infty$-category of modules over the $E_\infty$-algebra $\op{End}_{\C}(R_\bullet)$ associated to $R_\bullet$;
    \item If $E,F \in R_\bullet\mod$, then in our notation: $\Map_R(E,F)$ is the simplicial set of maps from a (cofibrant replacement of) $E$ to a (fibrant replacement) of $F$, $\RHom_R(E,F)$ is the $N(R_\bullet)$-module with $(\RHom_R(E,F))_i$ the degree $i$ morphisms from (a replacement of) $N(E)$ to (a replacement of) $N(F)$, and $\Hom_R(E,F) = \Ext^0_R(E,F)$ denotes maps in the derived category of $N(R_\bullet)$-modules.
\end{enumerate}

For any functor $\cF : \SCR \to \CAlg(\widehat{\op{Cat}}_\infty)$, where the latter denotes the $\infty$-category of (not necessarily small) symmetric monoidal $\infty$-categories, and any prestack $\X \in \Fun(\SCR,\widehat{\Sp})$, one can define $\cF(\X)$ as the right Kan extension of $\cF$ along the Yoneda embedding $\SCR \hookrightarrow \Fun(\SCR,\widehat{\Sp})^{op}$ (see \cite{DAG-VIII}*{Sect.~2.7} for a detailed discussion). This means that
\[ \cF(\X) = \ilim_{\eta \in \X(R)} \cF(R), \]
i.e., an object $E \in \cF(\X)$ is the coherent assignment to each pair of an $R \in \SCR$ and an $R$-point $\eta \colon \Spec(R) \to \X$ of an object $E_\eta \in \cF(R)$. Because the forgetful functor $\CAlg(\widehat{\op{Cat}}_\infty) \to \widehat{\op{Cat}}_\infty$ preserves limits, the $\infty$-category underlying $\cF(\X)$ is the limit of the $\infty$-categories underlying $\cF(R)$ above.

Applying this to the functor $\cF \colon R_\bullet \mapsto R_\bullet\mod$ defines the symmetric monoidal stable $\infty$-category $\QC(\X)$ of quasi-coherent complexes on $\X$ \cite{DAG-VIII}*{Def.~2.7.8}, i.e., a quasi-coherent complex $F \in \QC(\X)$ is the coherent assignment to each pair $(R, \eta \in \X(R))$ of an $R$-module $F_\eta \in R\mod$. We also define several other categories associated to functors $\SCR \to \CAlg(\widehat{\op{Cat}}_\infty)$ or $\SCR \to \widehat{\op{Cat}}_\infty$:
\begin{itemize}
\item $\Coh[n](\X)$: associated to the $\infty$-category of compact objects in the $\infty$-category of connective, $n$-truncated $R$-modules.
\item $\Coh(\X)$: special notation for $\Coh[0](\X)$, associated to the compact objects of the abelian category $(R\mod)^{\heart} = (\pi_0(R)\mod)^\heart$, i.e. the ordinary category of finitely presented $\pi_0(R)$-modules.
\item $\APerf(\X) \subset \QC(\X)$: associated to the symmetric monoidal stable $\infty$-category $\APerf(R) \subset R\mod$ of \emph{almost perfect} complexes. $\APerf(R)$ consists precisely of those $R$-modules $M$ such that $\tau_{<\ell} M \in (R\mod)_{<\ell}$ is compact for each $\ell \in \ZZ$.
\item $\Perf(\X) \subset \QC(\X)$: associated to the symmetric monoidal stable $\infty$-subcategory $\Perf(R) \subset R\mod$ of \emph{perfect complexes}. $\Perf(R)$ is the smallest subcategory of $R\mod$ closed under cones, shifts, and retracts and containing $R$.  By \cite{DAG-VIII}*{Prop.~2.7.28}, $\Perf(\X) \subset \QC(\X)$ consists precisely of the \emph{dualizable objects} with respect to the symmetric monoidal structure on $\QC(\X)$.
\end{itemize}

The functor $\SCR \to \CAlg(\widehat{\op{Cat}}_\infty)$ which maps $R \mapsto R\mod$ factors through the canonical functor $\SCR \to \CAlg^{cn}$ which maps $R \mapsto \op{End}_{R\mod}(R)$. It follows that if $\X : \SCR \to \widehat{\Sp}$ is a derived prestack, and $\X^{\rm sp}$ is the spectral algebraic stack obtained by left Kan extension along the functor $\SCR \to \CAlg^{cn}$, then $\QC(\X^{\rm sp}) \simeq \QC(\X)$, where the former denotes the construction of $\QC(-)$ for spectral prestacks. The same is true for all of the categories discussed above.

\begin{defn} \label{defn:locally_noetherian_categories} We say that $\X \in \Fun(\SCR, \widehat{\Sp})$ is a \emph{locally noetherian prestack} if it is left Kan extended, up to sheafification, from a functor on noetherian \cite{Lurie-Thesis}*{Def.~2.5.9} simplicial commutative algebras $\X^N \in \Fun(\SCR^{noeth}, \Sp)$. In this case $\QC(\X) \isom \QC(\X^N)$, where the latter denotes the right Kan extension of $R\mod$ along the Yoneda embedding $\SCR^{noeth} \hookrightarrow \Fun(\SCR^{noeth},\widehat{\Sp})^{op}$, and the same holds for all of the other categories considered above.
\end{defn}

In particular, for a noetherian ring $R \in \SCR^{noeth}$, $E \in R\mod$ is almost perfect if and only if $H_i(E) \in \QC(R\mod)^{\heart}$ is coherent for all $i$ and $H_i(E) = 0$ for $i \ll 0$ \cite{HigherAlgebra}*{Prop.~7.2.4.17}. So we could alternatively define $\APerf(\X) = \APerf(\X^N)$ as the right Kan extension of this functor $\SCR^{noeth} \to \CAlg(\op{Cat}_\infty)$ for a locally noetherian $\X$. This motivates the definition of the full stable subcategories of $\QC(\X^N)$:
\begin{itemize}
 \item $\DDAPerf(\X) \subset \QC(\X^N)$: consisting of objects such that for any pair $(R,\eta \in \X^N(R))$, $H_i(F_\eta) \in \QC(R\mod)^{\heart}$ is coherent for all $i$ and $H_i(F) = 0$ for $i \gg 0$.
 \item $\DCoh(\X) \subset \QC(\X^N)$: consisting of objects such that for any pair $(R,\eta \in \X^N(R))$, $H_i(F_\eta) \in \QC(R\mod)^{\heart}$ is coherent for all $i$ and $H_i(F) = 0$ for $i \gg 0$ and $i \ll 0$.
    \end{itemize}
The notation $\DDAPerf$ is motivated by the fact that when $\X$ admits a Grothendieck dualizing complex, then Grothendieck duality provides an anti-equivalence $\DDAPerf(\X) \isom \APerf(\X)^{op}$. $\DCoh(\X)$ is a version of the ``bounded derived category of coherent sheaves'' for derived stacks. Note that in contrast to $\APerf$, $\DCoh$ and $\DDAPerf$ do not inherit pullback functors and symmetric monoidal structures from $\QC$.

\subsubsection*{\texorpdfstring{$t$}{t}-structures}

When $\QC(\X)$ is presentable, which is always the case for algebraic derived stacks and their formal completions, because presentable categories are closed under small limits, then $\QC(\X)$ carries a $t$-structure whose subcategory of connective objects $\QC(\X)_{\geq 0} = \QC(\X)^{cn}$ consists of those $F$ such that $F_{\eta} \in (R\mod)_{\geq 0}$ is connective for all pairs $(R, \eta \in \X(R))$. Although this definition is formally convenient, using the $t$-structure is usually only practical when $\X$ is an algebraic stack. If  $\pi \colon U = \Spec(R)\to \X$ is an fppf atlas then $\pi^*$ is $t$-exact -- in particular, $F \in \QC(\X)$ is connective (resp., co-connective) if and only if $\pi^* F$ is so. Note also that when $\X$ is a locally noetherian prestack, $\APerf(\X)$ and $\DCoh(\X)$ carry unique $t$-structures for which the inclusion into $\QC(\X)$ is $t$-exact.

One can also consider the Kan extension of $R \mapsto (R\mod)^\heart$, the usual abelian category of quasi-coherent sheaves, which we denote $\QC^\heart(\X)$. In contrast, let $\QC(\X)^{\heart}$ denote the symmetric monoidal abelian category given by the heart of the $t$-structure. Since pullback is right $t$-exact, there is a truncation functor
\[ \left(\ilim_{\eta \in \X(R)} R\mod\right)^\heart \longrightarrow \ilim_{\eta \in \X(R)}\left((R\mod)^\heart\right) \qquad F_{\eta} \mapsto H_0(F_{\eta}) \]
and one can check that this is a fully faithful embedding. When $\X$ is algebraic, it in fact gives an equivalence $\QC(\X)^\heart \to \QC^\heart(\X)$. Note that because $(R\mod)^\heart \simeq (\pi_0(R)\mod)^\heart$, one can further identify $\QC^\heart(\X)$ with the category of quasi-coherent sheaves on the classical stack obtained by restricting $\X$ to the category of discrete simplicial commutative rings $\op{Ring} \subset \SCR$. Likewise in the locally noetherian case, $\DCoh(\X)^{\heart} \isom \APerf(\X)^{\heart}$ coincides with the ordinary abelian category of coherent sheaves on underlying classical prestack.

\subsection{Quasi-coherent pushforwards and base change} We recall the definition of the pushforward of quasi-coherent sheaves in our context.

\begin{defn}Suppose that $f \colon \X \to \Y$ is an arbitrary map of prestacks such that $\QC(\X)$ and $\QC(\Y)$ are presentable. Then $f^* \colon \QC(\Y) \to \QC(\X)$ is a colimit-preserving functor between presentable $\infty$-categories and thus admits a right adjoint \cite{HigherTopos}*{Cor.~5.5.2.9}, which we denote by $f_*$.
\end{defn}

\begin{rem} It is a priori non-obvious that $\QC(\X)$ has anything to do with sheaves of modules in some $\infty$-topos, or that the pushforward defined above has anything to do with a pushforward of sheaves.  Nevertheless, this is true if $\X$ (resp., $\X \to \Y$) is nice enough.  We do not dwell on this point, but the interested reader may consult e.g., \cite{DAG-VIII}*{Prop.~2.7.18} for the case of $\X$ a Deligne-Mumford stack and the \'etale $\infty$-topos.
\end{rem}

One might expect that $f_\ast$ as defined above is well-behaved for stacks which arise ``in nature'', but in general this is quite false: Take
\[ f \colon \X = B\ZZ/p \to \Spec(\bF_p) \]
Then, the functor $f_*$ will not preserve filtered colimits; will not be compatible with arbitrary base-change; and will generally not be pleasant.  Nevertheless, one has the following positive result:

\begin{lem}\label{lem:push-bdd-above} Suppose that $f\colon \X \to \Y$ is a morphism of spectral (or derived) prestacks which is a relative qc.qs. algebraic stack. Then,
\begin{itemize}
      \item Let $i : \Y' \to \Y$ be a relatively algebraic morphism of finite Tor amplitude (e.g. a flat morphism), and let $f' \colon \X' \to \Y'$ and $i' : \X' \to \X$ be the base change of $f$ and $i$ respectively. For $F \in \QC(\X)$, the canonical base change map
      \[   i^*(f_*(F)) \to (f')_*( (i')^*(F)) \]
      is an isomorphism if either $F$ is homologically bounded above (i.e. $F \in \QC(\X)_{<\infty}$) or if $i$ is finite and $F$ is arbitrary.
      \item $f_*$ preserves filtered colimits (equivalently, infinite sums) in $\QC(\X)_{<n}$ for each $n$ (i.e., for uniformly bounded above colimits).
    \end{itemize}
\end{lem}
\begin{proof}[Sketch]
  It is enough, by the definition of $\QC$ as extended from affines and smooth hyperdescent for $\QC$, to verify this in case where $\Y = \Spec(R)$ and $\Y' = \Spec(R')$ are affine. Let $p_\bullet \colon U_\bullet = \Spec(A_\bullet) \to \X$ be a presentation of $\X$ as the geometric realization, in smooth sheaves, of a simplicial diagram of affine schemes along smooth morphisms -- such a diagram exists because $\X$ is $\infty$-quasi-compact.
 
 By smooth hyper-descent for $\QC$, we thus have an equivalence
  \[ (p_\bullet)^*\colon \QC(\X) \isom \Tot\{ \QC(U_\bullet) \} \simeq \Tot\{A_\bullet\mod\}. \]
 Under this equivalence $f^*$ identifies with the cosimplicial diagram of pullbacks $(p_\bullet \circ f)^*$ mapping $M \mapsto A_\bullet \otimes_R M$.  Thus, we can compute the right adjoint $f_*$ in terms of this Cech diagram -- namely
 \[ f_*(\sF) = \Tot\left\{ (p_\bullet \circ f)_* p_\bullet^* \sF \right\}.\]
So if $p_\bullet^\ast(\sF) \simeq M_\bullet \in \Tot\{A_\bullet\mod\}$, then $f_\ast(\sF) \simeq \Tot\{M_\bullet\} \in R\mod$, where in the latter $M_\bullet$ is regarded as a cosimplicial $R$-module.

If $i$ is finite and has finite Tor amplitude, we conclude that $R'$ is perfect as $R$-module (because it is almost perfect of finite Tor amplitude).  Thus, $i^*$ commutes with arbitrary homotopy limits and in particular with the formation of the totalization, so the base change statement follows from the case where $\X$ is affine.
  
More generally, note that $\sF \in \QC(\X)_{<0}$ if and only if $M_\bullet \in (R\mod)_{<0}$ for all $\bullet$.  For such objects, the resulting spectral sequence of a totalization is a (convergent) third quadrant spectral sequence.  The formation of this spectral sequence is evidently compatible with filtered colimits and flat base change; a slight elaboration gives the case of finite Tor dimension base change.
\end{proof}

In order to obtain stronger base-change results, one must consider a restricted class of morphisms:

\begin{defn}\label{property:CD} We say that a morphism $f \colon \X \to \Y$ of spectral (or derived) prestacks is of 
  \emph{cohomological dimension at most $d$}  if for any $F \in \QC(\X)^\heart$ we have $f_* F \in \QC(\X)_{\geq -d}$.  (Note that if $\X$ is algebraic this depends only on the induced morphism of underlying classical stacks.)

  We say that a morphism $f \colon \X \to \Y$ of spectral (or derived) prestacks is \emph{universally of finite cohomological dimension} (or, \emph{satisfies \CD} for short) if there is some $d$ for which this condition is satisfied for the base-change of $f$ along any morphism from an affine $\Spec(A) \to \Y$.
\end{defn}

\begin{prop}\label{prop:push-CD}
Suppose that $f \colon \X \to \Y$ is a morphism of prestacks which is relatively representable by qc.qs. algebraic spectral (or derived) stacks satisfying \CD. Then:
  \begin{enumerate}
      \item $f_*\colon \QC(\X) \to \QC(\Y)$ preserves filtered colimits;
      \item $f_*$ and $f^*$ satisfy the projection formula, i.e., the natural morphism $f_*(F) \otimes G \to f_*(F \otimes f^* G)$ is an equivalence for all $F, G$;
      \item the formation of $f_*$ is compatible with arbitrary base-change.
    \end{enumerate}
\end{prop}

First we note an alternative definition of cohomological dimension which is often equivalent to the one above.
\begin{lem}\label{lem:push-connective}
Let $f : \X \to \Y$ be a morphism of qc.qs. spectral (or derived) algebraic stacks. Then $f$ is of cohomological dimension at most $d$ if and only if $f_\ast (\QC(\X)_{\geq 0}) \subset \QC(\Y)_{\geq -d}$.
\end{lem}
\begin{proof}
This follows from part (2) of \Cref{lem:random-t-stuff} and the fact that $\QC(\X)$ and $\QC(\Y)$ are $t$-complete: for any $F \in \QC(\X)_{\geq 0}$, $F = \varprojlim \tau_{\leq n} F$ and $f_\ast(F) \simeq \varprojlim f_\ast(\tau_{\leq n}(F))$, and finite cohomological dimension implies that $\tau_{\leq k}(f_\ast(\tau_{\leq n}(F)))$ is eventually constant in $n$, for any $k$.
\end{proof}

\begin{proof} 

First let us prove the proposition when $\Y = \Spec(A)$ is affine. The claims that $f_\ast$ preserves filtered colimits and is compatible with flat base change on $\Spec(A)$ can be checked at the level of homology groups, and the fact that $f_\ast (QC(\X)_{\geq 0}) \subset A\mod_{\geq -d}$ by \Cref{lem:push-connective} implies that 
\[ H_i \circ f_* = H_i \circ f_* \circ \tau_{\leq i+d}.\]
This reduces us to showing these claims for $f_\ast$ applied to $\QC(\X)_{<n}$ for any fixed $n$, which is \Cref{lem:push-bdd-above}.

Next let us verify (2): Pick a hypercover $U_\bullet = \Spec(B_\bullet) \to \X$, let $M_\bullet \in \Tot\{B_\bullet\mod\} \simeq \QC(\X)$, and let $N \in \QC(\Y) = A\mod$. We must verify that the natural map
  \[    \Tot\{ M_\bullet \} \otimes_A N \longrightarrow  \Tot\left\{ M_\bullet \otimes_{B_\bullet} (B_\bullet \otimes_A N) \right\} \]
  is a quasi-isomorphism.

  Let $\C \subset A\mod$ denote the full subcategory consisting of those $N \in A\mod$ for which the preceding map is a quasi-isomorphism for all $M_\bullet$. Note that $\C$ is closed under cones, shifts, and retracts since both $f_*$, $f^*$, and $\otimes$ preserve these operations up to quasi-isomorphism.  Next, note that $\C$ is closed under filtered colimits, because all three operations preserve filtered colimits by (1). Finally, observe that $A \in \C$.  But the smallest subcategory $A\mod$ containing $A$ and closed under cones, shifts, and filtered colimits is all of $A\mod$.

For (3), it is enough to consider the case of an affine base-change (See \cite{DrinfeldGaitsgory}*{Proposition 1.3.6} for an argument in the setting of dg-algebras which applies verbatim to $E_\infty$-algebras). Suppose that $S' = \Spec(A') \to \Spec(A)$ is arbitrary, and let $f' \colon \X' = \X \times_{\Spec(A)} S' \to S'$ be the base-change of $S$.  We must show that the natural map
\[ A' \otimes_A \RGamma(\X, F) \longrightarrow \RGamma(\X', \res{F}{\X'}) = \RGamma(\X, A' \otimes_A F) \] is an equivalence of $A'$-modules. Note, however, that regarding this as a morphism of $A$-modules, this is precisely the equivalence of the projection formula.

\smallskip
{\noindent \it Extending to arbitrary bases:}
\smallskip

We have shown that after base change to an affine $\Spec(A) \times_\Y \X$, the base change formula holds for $f$, and this implies the base change formula holds for $f$ itself and arbitrary prestacks $\Y' \to \Y$ (again by \cite{DrinfeldGaitsgory}*{Proposition 1.3.6}). Once we have base change, (1) follows, because colimits in $QC(\Y)$ can be identified by their restriction to affine derived schemes over $\Y$. Likewise the base change formula for $f$ can be checked after base change to an arbitrary affine.

\end{proof}

\begin{ex}
We have occasion to apply \Cref{prop:push-CD} to stacks which are not algebraic when we prove a strong version of the Grothendieck existence theorem, \Cref{thm:coh_projective_is_proper}. There we consider a fiber square of stacks which is the formal completion of a fiber square of algebraic stacks along a cocompact closed subset of the base.
\end{ex}
 
In characteristic zero, it turns out that \CD is very often satisfied:
\begin{prop} \label{prop:CD_characteristic_0}
  Suppose that $S$ is a noetherian characteristic zero derived scheme, and that $\X$ is a finite type algebraic derived $S$-stack such that the automorphisms of its geometric points are affine. Then, $\X$ satisfies \CD over $S$.
\end{prop}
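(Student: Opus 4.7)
The plan is to reduce the statement to a fiberwise question over fields of characteristic zero and then use the structure theory of algebraic stacks with affine stabilizers. First, I would reduce to $S = \Spec R$ Noetherian affine by working locally. By \autoref{prop:CDd}, property \CD may be verified after flat base change, so we are further reduced to producing a uniform bound $d$ such that for every point $\Spec k \to S$ with $k$ a field, $\X_k = \X \times_S \Spec k$ has cohomological dimension at most $d$ over $\Spec k$. Because $\X$ is finite type over $S$, both the relative dimension $\dim(\X/S)$ and the maximal dimension of a geometric stabilizer group of $\X$ are finite; all the numerical quantities appearing in the bound below are controlled by these two invariants, which ensures uniformity.

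Second, I would prove the fiberwise statement by Noetherian induction on a finite-type algebraic stack $\Y$ over a characteristic zero field $k$ with affine stabilizers. The key input is the structure theorem of Rydh (generalizing Kresch): such a $\Y$ admits a dense quasi-compact open substack of the form $[V/G]$ with $V$ a quasi-affine scheme of finite type and $G$ an affine algebraic $k$-group of finite type. Using the open/closed excision sequence in $\QC^{\heart}$, the inductive hypothesis applied to the (strictly smaller-dimensional) closed complement reduces the problem to bounding the cohomological dimension of $[V/G] \to \Spec k$.

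Third, I would bound the cohomological dimension of $[V/G] \to \Spec k$ by $\dim V + d_{BG}$, splitting the pushforward through $BG$. The factor $\dim V$ comes from Grothendieck vanishing applied to the quasi-affine scheme $V$ of finite type over $k$. The factor $d_{BG}$ is bounded using the characteristic zero structure of $G$: write $G = G^\circ \rtimes \pi_0(G)$, and further $G^\circ = R \ltimes U$ with $R$ reductive and $U$ unipotent. In characteristic zero, $\pi_0(G)$ and $R$ are linearly reductive so $B\pi_0(G)$ and $BR$ contribute cohomological dimension $0$. Filtering $U$ by its derived series, it is an iterated extension of copies of $\bG_a$; a direct calculation with the Koszul resolution of the trivial representation of $k[t]$ (equivalently, over $\Rep(\bG_a)$) shows that $B\bG_a$ has cohomological dimension exactly $1$ in characteristic zero. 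Thus $d_{BG} \leq \dim U \leq \dim G$, and combining gives a bound $\dim V + \dim G$ in terms of invariants of $\Y$.

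The main obstacle is ensuring the uniformity of the bound in base change: the stratification produced by the Rydh-type structure theorem must be controlled by invariants (fiber dimension of $\X/S$, maximal stabilizer dimension) that are themselves bounded by $\X/S$ being of finite type. Granting this — which follows from the constructibility of the relevant loci on $\X$ — the induction goes through and one obtains an honest bound $d$ depending only on $\X/S$, completing the verification of \CD.
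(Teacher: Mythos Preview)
Your overall architecture—stratify into quotient stacks, use open-closed excision, then bound each $[V/G]$ by factoring through $BG$—matches the paper's sketch (which simply cites Drinfeld–Gaitsgory). The paper is slightly slicker on the last step: rather than decomposing a general $G$ into reductive, unipotent, and component-group pieces, it observes that one may always take $G = GL_n$, which is already reductive and hence linearly reductive in characteristic zero, so $d_{BG} = 0$ outright. Your decomposition of $G$ is correct but unnecessary.

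There is, however, a genuine gap in your first step. You write that \autoref{prop:CDd} reduces the question to checking a uniform bound on the fibers $\X_k$ over field-valued points $\Spec k \to S$. But \autoref{prop:CDd} only says that \CD[d] can be checked after \emph{flat} base change; the inclusion of a point $\Spec k \hookrightarrow S$ is essentially never flat. A fibral criterion of the sort you invoke would itself require the base-change formula for $f_*$, which is exactly part of what \CD buys you—so the reduction is circular as stated. The paper (and Drinfeld–Gaitsgory) avoids this entirely: the Kresch-type stratification of $\X$ into global quotient stacks is performed \emph{relatively over $S$}, not fiber-by-fiber, so one bounds the cohomological dimension of each stratum $[V/GL_n] \to S$ directly using that $V \to S$ is a qc.qs. algebraic space of finite relative dimension. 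Your uniformity discussion at the end hints that you sensed this issue, but the fix is not to control the fiberwise stratifications—it is to produce a single stratification over $S$ from the start.
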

\begin{proof} This is proven in slightly more generality in \cite{DrinfeldGaitsgory}*{Theorem 1.4.2}: One can show that $\X$ has a finite stratification by global quotient stacks, and a straightforward argument shows that having finite cohomological dimension is stable under open-closed decompositions.  It thus suffices to prove the result for global quotient stacks.  This follows by noting that quasi-compact and quasi-separated algebraic spaces have finite cohomological dimension, and that reductive groups (e.g., $GL_n$) are linearly reductive in characteristic zero.
\end{proof}

In addition, for many morphisms between algebraic derived stacks, satisfying \CD is equivalent to having finite cohomological dimension.

\begin{prop}\label{prop:CDd} If $f\colon \X \to \Y$ is a morphism of qc.qs. algebraic derived (or spectral) stacks and $\Y$ has affine diagonal, then the following are equivalent:
  \begin{enumerate}
    \item $f$ is universally of cohomological dimension at most $d$;
    \item for any flat morphism $S = \Spec(R) \to \Y$, the base-change $f_S \colon \X_S \to S$ is of cohomological dimension at most $d$;
    \item $f$ is of cohomological dimension at most $d$;
    \item $f_*$ takes $\QC(\X)_{>0}$ into $\QC(\Y)_{>-d}$.
  \end{enumerate}
\end{prop}

\begin{proof}
\Cref{lem:push-connective} implies that (3) and (4) are equivalent. (1) implies (2) by definition, and (2) implies (3) by flat base change and faithfully flat descent. Finally, the property of being of cohomological dimension at most $d$ is stable under affine base change, because affine maps are conservative and $t$-exact. Thus because $\Y$ is has affine diagonal, (3) implies that $f$ is of cohomological dimension at most $d$ after base change to any affine test scheme, which is (1).
\end{proof}

Finally we note:

\begin{prop}
Given a morphism $f : \X \to \Y$ of derived (or spectral) stacks, property \CD is fppf-local over $\Y$.
\end{prop}
\begin{proof}
\CD is clearly stable under base change. Conversely, if $\Y' \to \Y$ is an fppf algebraic morphism such that $\X \times_\Y \Y' \to \Y$ satisfies \CD, then for any morphism $\Spec(R) \to \Y$, one can find an fppf morphism $\Spec(R') \to \Spec(R)$ such that the composition $\Spec(R') \to \Y$ lifts to $\Y'$. It follows that $\X_{R'} \to \Spec(R')$ has cohomological dimension $d$ and thus so does $\X_R \to \Spec(R)$ by fppf descent and flat base change.
\end{proof}

\subsection{Enough coherent complexes} We will often use the following compact generation statement:

\begin{thm}\label{thm:coh-gen} If $\X$ is a noetherian algebraic derived (or spectral) stack, then:
  \begin{enumerate}
      \item (\cite{LMB00}) $\QC(\X)^{\heart}$ is compactly-generated by $\DCoh(\X)^{\heart}$.  
      \item (\cite{DrinfeldGaitsgory}) The subcategory $\QC(\X)_{<0} \subset \QC(\X)$ is compactly-generated, with compact objects precisely $\DCoh(\X)_{<0}$.
      \item For each $d \geq 0$, the subcategory $\QC(\X)_{\geq 0, <d} \subset \QC(\X)$ is compactly-generated, with compact objects precisely $\DCoh(\X)_{\geq 0, <d}$.
    \end{enumerate}
\end{thm}
\begin{proof} See \cite{LMB00}*{Prop.~15.4} for (1). Since the $t$-structure on $\QC(\X)$ is right $t$-complete and compatible with filtered colimits -- these properties being flat-local and true for affines -- both (2) and (3) reduce to (1) along with the assertion that, for any $K \in \DCoh(\X)$, the functor $\Map(K,\bullet)$ preserves uniformly left $t$-bounded filtered colimits. Because any such $K$ is bounded, we can reduce to showing that $\forall K \in \DCoh(\X)^\heart$ and all $i \geq 0$, $\Ext^i(K,\bullet)$ preserves filtered colimits in $\QC(\X)^\heart$. This can be verified in the affine case by approximating $K$ by perfect complexes, and the general case reduces to the affine case via the argument in the proof of \Cref{lem:push-bdd-above}, where we only need to use the first $i$ levels of the hypercover of $\X$.

\end{proof}

\bibliographystyle{plain}
\bibliography{references}

\end{document}